\newcommand{\Balpha}{\mbox{$\hspace{0.1em}\rule[0.01em]{0.05em}{0.39em}\hspace{-0.21em}\alpha$}}
\numberwithin{equation}{section}
\theoremstyle{plain}
\newtheorem*{rep@theorem}{\rep@title}
\newcommand{\newreptheorem}[2]{%
\newenvironment{rep#1}[1]{%
 \def\rep@title{#2 \ref{##1}}%
 \begin{rep@theorem}}%
 {\end{rep@theorem}}}
\newtheorem{theorem}[equation]{Theorem}
\newtheorem{proposition}[equation]{Proposition}
\newtheorem{lemma}[equation]{Lemma}
\newtheorem{corollary}[equation]{Corollary}
\newtheorem*{conjecture}{Conjecture}
\newtheorem{claim}[equation]{Claim}
\newtheorem*{maintheorem}{Main Theorem}
\theoremstyle{remark}
\newtheorem{remark}[equation]{Remark}
\theoremstyle{definition}
\newtheorem{definition}[equation]{Definition}
\newtheorem*{question*}{Question}
\newcommand{\A}{{\mathcal A}}
\newcommand{\K}{{\mathcal K}}
\newcommand{\R}{\mathbb R}
\newcommand{\spt}{\operatorname{spt}}
\newcommand{\graph}{\operatorname{graph}}
\newcommand{\al}{\alpha}
\newcommand{\D}{\partial}
\newcommand{\de}{\delta}
\newcommand{\eps}{\varepsilon}
\newcommand{\ol}{\overline}
\providecommand{\abs}[1]{\lvert #1\rvert}
\def\XXint#1#2#3{{\setbox0=\hbox{$#1{#2#3}{\int}$}
     \vcenter{\hbox{$#2#3$}}\kern-.5\wd0}}
\begin{document}

\title[The moduli space of 2-convex embedded spheres]{The moduli space of two-convex\\ embedded spheres}
\author{Reto Buzano \and Robert Haslhofer \and Or Hershkovits}

\date{\today \thanks{The authors have been supported by EPSRC grant EP/M011224/1, NSERC grant RGPIN-2016-04331, and NSF grant DMS-1406394. We thank Richard Bamler and Igor Belegradek for helpful discussions.}}

\begin{abstract}
We prove that the moduli space of $2$-convex embedded $n$-spheres in $\mathbb{R}^{n+1}$ is path-connected for every $n$. Our proof uses mean curvature flow with surgery and can be seen as an extrinsic analog to Marques' influential proof of the path-connectedness of the moduli space of positive scalar curvature metics on three-manifolds \cite{Marques}.
\end{abstract}

\maketitle

\tableofcontents

\section{Introduction}

To put things into context, let us start with a general discussion of the moduli space of embedded $n$-spheres in $\mathbb{R}^{n+1}$, i.e. the space
\begin{equation}
\mathcal{M}_n = \textrm{Emb}(S^n,\mathbb{R}^{n+1})/\textrm{Diff}(S^n)
\end{equation}
equipped with the smooth topology.

In 1959, Smale proved that the space of embedded circles in the plane is contractible \cite{Smale}, i.e.
\begin{equation}\label{smale_thm}
\mathcal{M}_1\simeq \ast .
\end{equation}
In particular, the assertion $\pi_0(\mathcal{M}_1)=0$ is equivalent to the smooth version of the Jordan-Schoenflies theorem, and the assertion  $\pi_1(\mathcal{M}_1)=0$ is equivalent to Munkres' theorem that $\textrm{Diff}_+(S^2)$ is path-connected \cite{Munkres}.

Moving to $n=2$, Smale conjectured that the space of embedded 2-spheres in $\mathbb{R}^3$ is also contractible, i.e. that
\begin{equation}
\mathcal{M}_2\simeq \ast .
\end{equation}
In 1983, Hatcher proved the Smale conjecture \cite{Hatcher}. To understand the full strength of this theorem, let us again discuss the special cases $\pi_0$ and $\pi_1$. The assertion $\pi_0(\mathcal{M}_2)=0$ is equivalent to Alexander's strong form of the three dimensional Schoenflies theorem \cite{Alexander}. The assertion $\pi_1(\mathcal{M}_2)=0$ is equivalent to Cerf's theorem that $\textrm{Diff}_+(S^3)$ is path-connected \cite{Cerf}, which had wide implications in differential topology (and in particular has the important corollary that $\Gamma_4=0$).

For $n\geq 3$ not a single homotopy group of $\mathcal{M}_n$ is known. The case $n=3$ is related to major open problems in $4$-manifold topology. For example, finding a nontrivial element in $\pi_0( \mathcal{M}_3)$ would give a counterexample to the Schoenflies conjecture, and thus in particular a counterexample to the smooth 4d Poincar\'e conjecture. Regardless of the answer to these major open questions, we definitely have:
\begin{equation}
\textrm{The naive guess that } \mathcal{M}_n\simeq\ast  \textrm{ for all $n$ is completely false}.
\end{equation}
Indeed, if $\mathcal{M}_n$ were contractible for every $n$, then arguing as in the appendix of \cite{Hatcher}, we could infer that $\mathcal{D}_n:=\textrm{Diff}(D^{n+1}\, \textrm{rel}\, \partial D^{n+1})$ is contractible for every $n$. However, it is known that $\mathcal{D}_n$ has non-vanishing homotopy groups for every $n\geq 4$ \cite{CrowleySchick}.

In view of the topological complexity of $\mathcal{M}_n$ for $n\geq 3$,  it is an interesting question whether one can still derive some positive results on the space of embedded $n$-spheres under some curvature conditions. Such results would show that all the non-trivial topology of $\mathcal{M}_n$ is caused by embeddings of $S^n$ that are geometrically very far away from the canonical one. Some motivation to expect such an interaction between geometry and topology comes from what is known about the differential topology of the sphere itself. Namely, despite the existence of exotic spheres in higher dimensions \cite{Milnor}, it was proved by Brendle-Schoen that every sphere which admits a metric with quarter-pinched sectional curvature must be standard \cite{BrendleSchoen}. 

Motivated by the topological classification result from \cite{HuiskenSinestrari}, we consider $2$-convex embeddings, i.e. embeddings such that the sum of the smallest two principal curvatures is positive. Clearly, $2$-convexity is preserved under reparametrizations. We can thus consider the subspace
\begin{equation}
\mathcal{M}_n^{\textrm{2-conv}}\subset \mathcal{M}_n
\end{equation}
of $2$-convex embedded $n$-spheres in $\mathbb{R}^{n+1}$. We propose the following higher dimensional Smale type conjecture.

\begin{conjecture}The moduli space of 2-convex embedded $n$-spheres in $\R^{n+1}$ is contractible for every dimension $n$, i.e.
\begin{equation}
\mathcal{M}_n^{\textrm{2-conv}} \simeq\ast .
\end{equation}
\end{conjecture}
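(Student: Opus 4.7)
The plan is to use mean curvature flow with surgery as a canonical deformation mechanism. For any $2$-convex embedded sphere $\Sigma \subset \R^{n+1}$, the flow of Huisken--Sinestrari and its refinement by Haslhofer--Kleiner preserves $2$-convexity, and after finitely many surgeries along thin necks the flow terminates by sweeping out a finite disjoint union of convex regions, each shrinking to a round point. This gives for each $\Sigma$ a finite ``evolution tree'': at each surgery time a $2$-convex sphere is cut along an $(n-1)$-sphere in a neck region, producing two new $2$-convex spheres, each of which is again evolved until it is either extinguished or cut. The topological input is that since the original surface is a sphere and all surgeries are performed along embedded necks, every leaf of the tree is again a $2$-convex embedded sphere, with total complexity strictly less than that of $\Sigma$; inductively these leaves can be assumed already contractible to the round sphere in $\mathcal{M}_n^{\textrm{2-conv}}$.

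For the $\pi_0$ level, which is what one should attack first, I would use this tree to build an explicit path from $\Sigma$ to a standard round sphere: run the flow on $\Sigma$ backward in time from a point just before the first surgery to get a path in $\mathcal{M}_n^{\textrm{2-conv}}$, reverse a small tube-insertion at each surgery (this is a continuous motion in $\mathcal{M}_n^{\textrm{2-conv}}$ precisely because the neck is almost cylindrical and its complement is a disjoint union of slightly capped $2$-convex spheres), and concatenate with the inductively-obtained paths for each piece. To upgrade this to full contractibility of $\mathcal{M}_n^{\textrm{2-conv}}$, the plan would be to do everything parametrically: given a compact family $K \to \mathcal{M}_n^{\textrm{2-conv}}$, choose uniform surgery parameters (possible by compactness and the uniformity of curvature estimates under $2$-convexity), then run a simultaneous flow-with-surgery and assemble the resulting piecewise-defined deformation across parameters into a continuous homotopy onto a family of round spheres, which is then contracted via rigid motions and scalings.

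The main obstacle is the parametric step. The surgery procedure is inherently discrete --- surgery times, surgery necks, and even the combinatorial type of the resulting decomposition can vary discontinuously in the parameter $k \in K$ --- so producing an honest continuous homotopy requires developing a parametric surgery theory in which the discrete choices are replaced by a canonical, parameter-continuous rule (e.g.\ triggering surgery on every sufficiently thin neck in a carefully chosen window, and reconciling the overlapping decompositions across $K$). Even for $\pi_1$ this already requires matching up the combinatorial trees along a loop in parameter space, and one sees nothing forcing this matching to be consistent. This is why I would expect, and what the abstract confirms, that the method as stated suffices only for path-connectedness, while the higher-homotopy statement in the conjecture requires genuinely new ideas beyond running the flow one surface at a time.
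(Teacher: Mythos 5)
The statement you were asked about is the paper's Conjecture, which the paper itself does not prove: the paper establishes only its $\pi_0$ part (path-connectedness), and you correctly identify that the flow-with-surgery method, run one hypersurface at a time, does not by itself give contractibility because the surgery data (times, necks, combinatorial type of the decomposition) vary discontinuously in a parameter. So your assessment of the status of the full conjecture agrees with the paper's; there is no proof of the higher-homotopy statement to compare against.

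Your sketch of the $\pi_0$ argument is in the same spirit as the paper's proof (backwards induction over surgery times, undoing each surgery near the neck, handling the extinct pieces separately), but it glosses over the two points where the paper has to work hardest, and as written one step would fail. First, ``reverse a small tube-insertion at each surgery and concatenate with the inductively-obtained paths for each piece'' is the wrong order of operations: once the post-surgery pieces have been deformed by their own isotopies they are no longer a pair of opposing caps near the former neck, so there is nothing left to reverse, and conversely if you reattach first you must then deform the pieces \emph{while they are joined}, which forces you to control collisions between the deforming pieces and the connecting tubes. The paper resolves this by carrying thin strings (of radius $r_s\ll H_{\textrm{trig}}^{-1}$) along with the isotopies, which requires a connected-sum construction that preserves $2$-convexity and depends smoothly on the configuration (Sections 3--4), isotopies that are monotone outside a fixed disjoint union of surgery balls, and a lemma for sliding string endpoints out of surgery regions. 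Second, the induction does not terminate at round spheres of ``strictly smaller complexity'': there is no such decreasing complexity, the recursion terminates because the flow becomes extinct in finite time with finitely many surgeries, the leaves are the discarded components whose structure (convex, or a capped $\eps$-tube) comes from the canonical neighborhood theorem, and the inductive output at each stage is a \emph{marble tree} (small balls joined by strings), which needs a separate argument (Section 5) to be deformed through $2$-convex embeddings to a round ball. Without these ingredients the base case and the gluing step of your induction are not available.
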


In the present article, we confirm the $\pi_0$-part of this conjecture.

\begin{maintheorem}\label{main_thm}
The moduli space of 2-convex embedded $n$-spheres in $\R^{n+1}$ is path-connected for every dimension $n$, i.e.
\begin{equation}
\pi_0(\mathcal{M}_n^{\textrm{2-conv}}) = 0.
\end{equation}
\end{maintheorem}

To the best of our knowledge, our main theorem is the first positive topological result about a moduli space of embedded spheres in any dimension $n\geq 3$ (except of course for the moduli space of convex embedded spheres, which is easily seen to be contractible).

Our main theorem can be seen as an extrinsic analog to the path-connectedness of the moduli space of positive scalar curvature metics in dimension two and three. More precisely, let $M^n$ be a compact orientable $n$-dimensional manifold and denote by $\mathcal{R}_+(M^n)$ the set of Riemannian metrics on $M^n$ with positive scalar curvature. In \cite{Marques}, Marques proved that for $n=3$, if $\mathcal{R}_+(M^3) \neq \emptyset$, the moduli space $\mathcal{R}_+(M^3)/\textrm{Diff}(M^3)$ is path-connected, i.e.
\begin{equation}\label{Marques}
\pi_0(\mathcal{R}_+(M^3)/\textrm{Diff}(M^3)) = 0.
\end{equation}
Combining this with Cerf's result \cite{Cerf} that $\textrm{Diff}_+(S^3)$ is path-connected, as mentioned above, Marques also finds the corollary that the space $\mathcal{R}_+(S^3)$ of positive scalar curvature metrics on the three-sphere is path-connected. While the lower dimensional statement that $\mathcal{R}_+(S^2)$ is path-connected is a classical theorem of Weyl \cite{Weyl}, Marques' result is surprising because in higher dimensions the picture is very different. Indeed, for $n=3+4k$ with $k\in\mathbb{N}$, both $\mathcal{R}_+(S^n)$ and $\mathcal{R}_+(S^n)/\textrm{Diff}(S^n)$ have infinitely many path-connected components \cite{Carr, KreckStolz}.

Marques' fascinating proof \cite{Marques} uses powerful methods from geometric analysis, most importantly the theory of three-dimensional Ricci flow with surgery developed by Perelman \cite{perelman_entropy,perelman_surgery}. Inspired by his work, our proof of the main theorem uses a similar scheme for the mean curvature flow with surgery.

To illustrate the idea in a simplified setting, let us start by sketching a geometric-analytic proof of Smale's theorem, i.e. of assertion \eqref{smale_thm}. For $n=1$, the mean curvature flow is usually called the curve shortening flow and it deforms a given smooth closed embedded curve $\Gamma \subset \mathbb{R}^2$ in time with normal velocity given by the curvature vector. This produces a one-parameter family of curves $\{\Gamma_t\}_{t\in [0,T)}$ with $\Gamma_0=\Gamma$. By a beautiful theorem of Grayson \cite{Grayson}, the flow becomes extinct in a point, and after suitably rescaling converges to a round circle. If $A_\Gamma$ denotes the enclosed area, then the extinction time can be explicitly computed by the simple formula $T_\Gamma=A_\Gamma/2\pi$. Let $x_\Gamma\in \mathbb{R}^2$ denote the extinction point and consider the map $\mathcal{I}:\mathcal{M}_1\times [0,1]\to \mathcal{M}_1$ defined for $t<1$ by
\begin{equation}
\mathcal{I}(\Gamma,t)= \sqrt{\frac{1-t(1-\pi/A_\Gamma)}{1-t}}\cdot\left ( \Gamma_{\frac{A_\Gamma}{2\pi}t}-x_\Gamma\right)+(1-t)x_\Gamma.
\end{equation}
Then $\mathcal{I}(\Gamma,0)=\Gamma$, $\lim_{t\to 1}\mathcal{I}(\Gamma,t)=S^1$ by Grayson's theorem, and $\mathcal{I}$ is continuous if we set $\mathcal{I}(\Gamma,1)=S^1$. Thus $\mathcal{I}$ is a homotopy proving that $\mathcal{M}_1\simeq \ast$.

For $n\geq 2$, the study of mean curvature flow is much more involved due to the formation of local singularities. For example, if the initial condition looks like a dumbbell, then a neck-pinch singularity develops \cite{ref_neckpinch}. In this case, one obtains a round shrinking cylinder as blowup limit. Another example of a singularity is the degenerate neck-pinch \cite{ref_degneckpinch}, which is modelled on a self-similarly translating soliton. While in general there are highly complicated singularity models  for the mean curvature flow (see e.g. \cite{KKM}), the above two examples essentially illustrate what singularities can occur if the initial hypersurface is $2$-convex. Namely, in this case the only self-similarly shrinking singularity models are the round sphere $S^n$ and the round cylinder $S^{n-1}\times \mathbb{R}$ \cite{Huisken_local_global,HuiskenSinestrari_convexity}, and the only self-similarly translating singularity model is the rotationally symmetric bowl soliton \cite{Haslhofer_bowl}.

The above picture suggests a surgery approach for the mean curvature flow of $2$-convex hypersurfaces, where one heals local singularities by cutting along cylindrical necks and gluing in standard caps.\footnote{In the case of the degenerate neck-pinch one can find a neck at controlled distance from the tip.} This idea has been implemented successfully in the deep work of Huisken-Sinestrari \cite{HuiskenSinestrari}, and in more recent work by Haslhofer-Kleiner \cite{HK_surgery} and Brendle-Huisken \cite{BrendleHuisken}. In a flow with surgery, there are finitely many times where suitable necks are replaced by standard caps and/or where connected components with specific geometry and topology are discarded. Away from these finitely many times, the evolution is simply smooth evolution by mean curvature flow. By the above quoted articles, mean curvature flow with surgery exists for every $2$-convex initial condition where $2$-convexity is preserved under the flow and the surgery procedure. In this paper, we use the approach from Haslhofer-Kleiner \cite{HK_surgery}, which has the advantage that it works in every dimension and that it comes with a canonical neighborhood theorem \cite[Thm.~1.22]{HK_surgery} that is quite crucial for our topological application.

Let us now sketch the key ideas of the proof of our main theorem.

Given a $2$-convex embedded sphere $M_0\subset \mathbb{R}^{n+1}$, we consider its mean curvature flow with surgery $\{M_t\}_{t\in [0,\infty)}$ as provided by the existence theorem from \cite[Thm. 1.21]{HK_surgery}. The flow always becomes extinct in finite time $T<\infty$. The simplest possible case is when the evolution is just smooth evolution by mean curvature flow and becomes extinct (almost) roundly. In this case, a rescaled version of $\{M_t\}_{t\in [0,T)}$ yields the desired path in $\mathcal{M}_n^{\textrm{2-conv}}$ connecting $M_0$ to a round sphere. In general, things are more complicated since (a) there can be surgeries and/or discarding of some components, and (b) the hypersurface doesn't necessarily become round.

Let us first outline the analysis of the discarded components.
By the canonical neighborhood theorem \cite[Thm. 1.22]{HK_surgery} and the topological assumption on $M_0$, each connected component which is discarded is either a convex sphere of controlled geometry or a capped-off chain of $\eps$-necks. This information is sufficient to construct an explicit path in $\mathcal{M}_n^{\textrm{2-conv}}$ connecting any discarded component to a round sphere (see Section \ref{sec_discarded}).

While surgeries disconnect the hypersurfaces into different connected components, for our topological application we eventually have to glue the pieces together again. To this end, in Section \ref{sec_gluing}, we prove the existence of a connected sum operation that preserves $2$-convexity and embeddedness, and which depends continuously on the gluing configuration. The connected sum glues together 2-convex spheres along what we call strings, and also allows for capped-off strings. A string is a tiny tube around an admissible curve. The mean curvature flow with surgery comes with several parameters, in particular the trigger radius $H_{\textrm{trig}}^{-1}$. The point is to choose the string radius $r_s\ll H_{\textrm{trig}}^{-1}$ to ensure that the different scales in the problem barely interact.

We then argue by backwards induction on the surgery times using a scheme of proof which is inspired by related work on Ricci flow with surgery and moduli spaces of Riemannian metrics \cite{perelman_entropy,perelman_surgery,Marques}. The main claim we prove by backwards induction is that at each time every connected component is isotopic via 2-convex embeddings to what we call a marble tree. Roughly speaking, a marble tree is a connected sum of small spheres (marbles) along admissible strings that does not contain any loop. At the extinction time, by the above discussion, every connected component is isotopic via 2-convex embeddings to a round sphere, i.e. a marble tree with just a single marble and no strings. The key ingredient for the induction step is the connected sum operation which, as we will explain, allows us to glue isotopies, see Section \ref{sec_proofmain}. This is illustrated in Figure \ref{fig.1}. A final key step, which we carry out in Section \ref{sec_marble}, is to prove that every marble tree is isotopic via 2-convex embeddings to a round sphere. This concludes the outline of the proof of the main theorem.

Let us compare our proof to Marques' proof of \eqref{Marques} from \cite{Marques}. First, Marques introduces the concept of \emph{canonical metrics}, which are obtained from the standard round three-sphere by attaching to it finitely many spherical space forms via a connected sum construction and adding finitely many handles (via connected sums of the sphere to itself). It is easy to see that the space of canonical metrics on a fixed manifold $M^3$ is path-connected. Marques then uses Ricci flow with surgery, backwards induction, and connected sums to define a continuous path from any given positive scalar curvature metric to a canonical metric. Our approach described above is an extrinsic variant of his scheme of proof with marble trees playing a similar role to his canonical metrics. Several steps of Marques' work rely heavily on a connected sum construction which preserves positive scalar curvature. This was introduced independently by Gromov-Lawson \cite{GL} and Schoen-Yau \cite{SY}. In our context, a suitable connected sum construction for hypersurfaces preserving two-convexity and embeddedness was not available in the existing literature, and so the bulk of the technical work in this paper is devoted to developing such a construction. This may be of independent interest. Another difference is that the conformal method is not available in extrinsic geometry. As a substitute, we construct some explicit isotopies from our marble trees to round spheres. Finally, and more technically, in our extrinsic setting it is important that we keep track of where surgery procedures happen in ambient space. In fact, it can become necessary to move strings out of surgery regions finitely many times during the backwards induction process -- we explain this in detail in Section \ref{sec_proofmain}.

\begin{figure}[H]
\includegraphics[width=10cm]{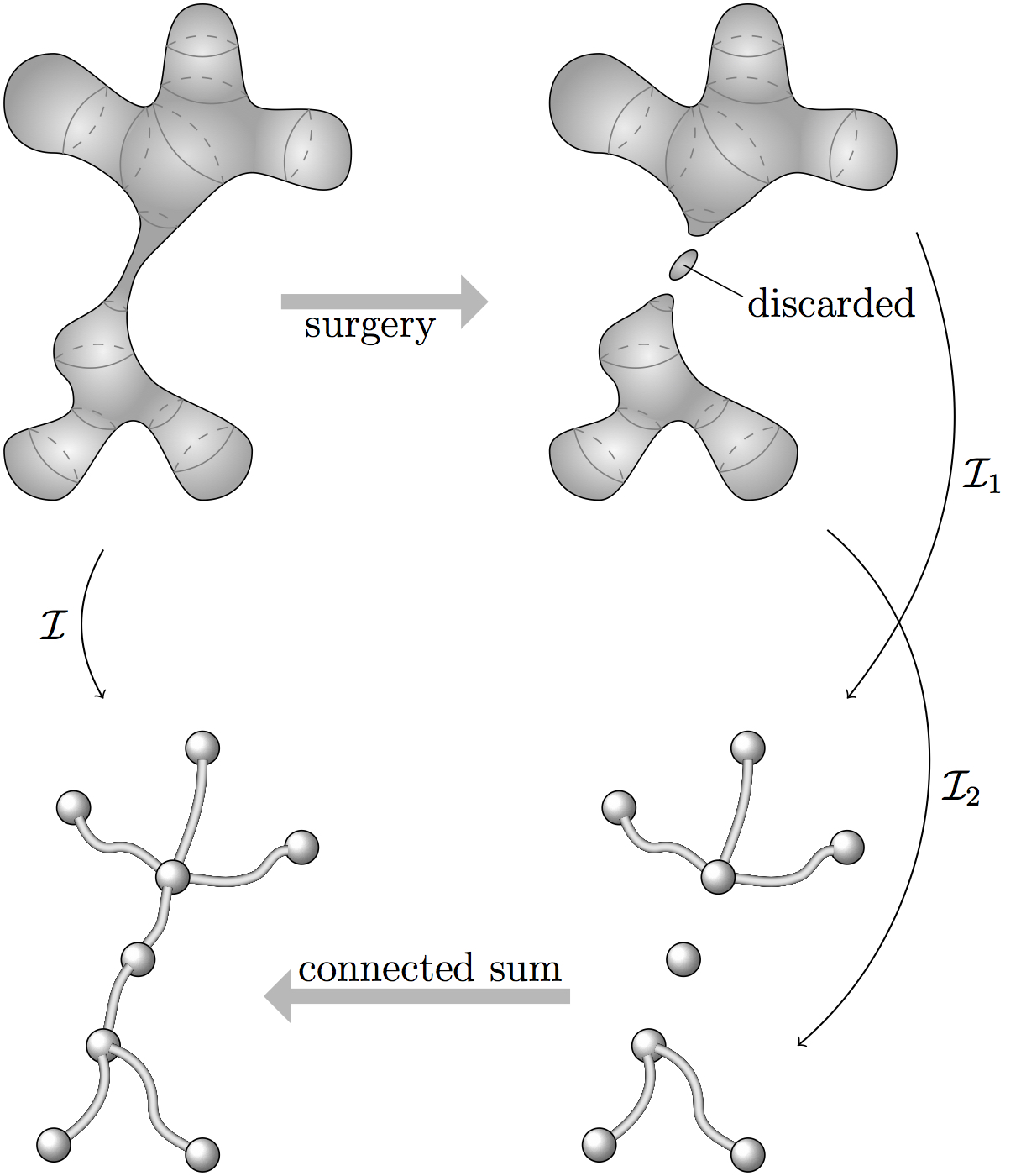}
\vspace{5mm}
\caption{By inductive assumption, each connected component of the post-surgery manifold and every discarded component is isotopic to a marble tree. We will prove that these isotopies can be glued together to yield an isotopy of the pre-surgery manifold to a connected sum of the marble trees.}\label{fig.1}
\end{figure}
\vspace{-2mm}

This article is organized as follows. In Section \ref{sec_basic}, we introduce some basic notions, such as controlled configurations of domains and curves, and capped-off tubes. In Section \ref{sec_transition}, we construct some transition functions satisfying certain differential inequalities which are essential to preserve $2$-convexity under gluing. In Section \ref{sec_gluing}, we prove the existence of a suitable connected sum operation. In Section \ref{sec_marble}, we prove that marble-trees can be deformed to a round sphere preserving $2$-convexity and embeddedness. In Section \ref{sec_necksurgery}, we recall the relevant notions of necks and surgery and construct an isotopy connecting the pre-surgery manifold with a modified post-surgery manifold containing a glued-in string for each neck that was cut out. In Section \ref{sec_discarded}, we construct the isotopies for the discarded components. In Section \ref{sec_mcfsurgery}, we summarize some aspects of mean curvature flow with surgery from \cite{HK_surgery} that are needed for the proof of the main theorem. Finally, in Section \ref{sec_proofmain}, we combine all the ingredients and prove the main theorem.

\section{Basic notions, controlled configurations and caps}\label{sec_basic}

In this section, we first fix some basic notions, and then define controlled configurations of curves and domains, and capped-off tubes.

Instead of talking about a closed embedded hypersurface $M\subset \mathbb{R}^{n+1}$ it is slightly more convenient for us to talk about the compact domain $K\subset\mathbb{R}^{n+1}$ with $\partial K=M$. This point of view is of course equivalent, since $M$ determines $K$ and vice versa. A domain $K\subset\mathbb{R}^{n+1}$ is called \emph{$2$-convex}, if
\begin{equation}
\lambda_1+\lambda_2>0
\end{equation}
for all $p\in\partial K$, where $\lambda_1\leq \lambda_2\leq \ldots \leq \lambda_n$ are the principal curvatures, i.e. the eigenvalues of the second fundamental form $A$ of $\partial K$. In particular, every $2$-convex domain has positive mean curvature
\begin{equation}
H>0.
\end{equation}

The isotopies we construct in this paper can be described most conveniently via smooth families $\{K_t\}_{t\in [0,1]}$ of 2-convex domains (note that talking about domains has the advantage that it automatically incorporates embeddedness). We say that an isotopy $\{K_t\}_{t\in [0,1]}$ is \emph{trivial outside a set $X$} if $K_t\cap (\R^{n+1}\setminus X)$ is independent of $t$.
We say that an isotopy is \emph{monotone} if $K_{t_2}\subseteq K_{t_1}$ for $t_2\geq t_1$, and \emph{monotone outside a set $X$} if $K_{t_2}\cap (\R^{n+1}\setminus X)\subseteq K_{t_1}\cap (\R^{n+1}\setminus X)$ for $t_2\geq t_1$.

We also need more quantitative notions of $2$-convexity and embeddedness. A $2$-convex domain is called \emph{$\beta$-uniformly $2$-convex}, if
\begin{equation}
\lambda_1+\lambda_2\geq \beta H
\end{equation}
for all $p\in\partial K$. A domain $K\subset\mathbb{R}^{n+1}$ is called \emph{$\alpha$-noncollapsed} (see \cite{sheng_wang,andrews1,HK}) if it has positive mean curvature and each boundary point $p\in \partial K$ admits interior and exterior balls tangent at $p$ of radius at least $\alpha/H(p)$.

\begin{definition}[{$\mathbb{A}$-controlled domains}]\label{def_contr_dom}
Let $\alpha\in (0,n-1)$, $\beta\in (0,\tfrac{1}{n-1})$, $c_H>0$, and let $C_A,D_A<\infty$.
A domain $K\subset\mathbb{R}^{n+1}$ is called \emph{$(\alpha,\beta,c_H,C_A,D_A)$-controlled}, if it is $\alpha$-noncollapsed, $\beta$-uniformly $2$-convex, and satisfies the bounds
\begin{equation}\label{eq_bound_contr_dom}
H\geq c_H ,\qquad \lambda_n\leq C_A ,\qquad \abs{\nabla A}\leq D_A.
\end{equation}
We write $\mathbb{A}=(\alpha,\beta,c_H,C_A,D_A)$ to keep track of the constants. Moreover, we introduce the following three classes of domains.
\begin{enumerate}[\hspace{3mm}(1)]
\item The set $\mathcal{D}$ of all (possibly disconnected) 2-convex smooth compact domains $K\subset \mathbb{R}^{n+1}$.
\item The set $\mathcal{D}_{\mathbb{A}}=\{K\in\mathcal{D}\;|\;\textrm{$K$ is $\mathbb{A}$-controlled} \}$ of all $\mathbb{A}$-controlled domains.
\item The set $\mathcal{D}_{\mathbb{A}}^{\textrm{pt}} =\{(K,p)\in \mathcal{D}_{\mathbb{A}}\times\mathbb{R}^{n+1} \; |\; p\in \partial K\}$ of all pointed $\mathbb{A}$-controlled domains.
\end{enumerate}
\end{definition}

To facilitate the gluing construction, let us now introduce a notion of controlled configurations of domains and curves.

\begin{definition}[{$b$-controlled curves}]
Let $b>0$. An oriented compact curve $\gamma\subset \mathbb{R}^{n+1}$ (possibly with finitely many components) is called \emph{$b$-controlled} if the following conditions are satisfied.
\begin{enumerate}[\hspace{3mm}(a)]
\item The curvature vector satisfies $|\kappa|\leq b^{-1}$ and $|\partial_s \kappa|\leq b^{-2}$.
\item Each connected component has normal injectivity radius $\geq \frac{1}{10}b$.
\item Different connected components are distance $\geq 10 b$ apart.  
\end{enumerate} 
Moreover, we introduce the following two classes of curves.
\begin{enumerate}[\hspace{3mm}(1)]
\item The set $\mathcal{C}_{b}$ of all $b$-controlled curves $\gamma\subset\mathbb{R}^{n+1}$.
\item The set $\mathcal{C}^{\textrm{pt}}_{b}=\{(\gamma,p)\in \mathcal{C}_{b}\times \mathbb{R}^{n+1}\; | \; p\in\partial\gamma \, \}$ of pointed $b$-controlled curves.
\end{enumerate}
\end{definition}

\begin{definition}[Controlled configuration of domains and curves]\label{conrolled_conf}
We call a pair $(K,\gamma)\in \mathcal{D}_{\mathbb{A}}\times \mathcal{C}_{b} $ an \emph{$(\mathbb{A},b)$-controlled configuration} if 
\begin{enumerate}[\hspace{3mm}(a)]
\item The interior of $\gamma$ lies entirely in $\mathbb{R}^{n+1}\setminus K$.
\item The endpoints of $\gamma$ satisfy the following properties:
\begin{itemize}
\item If $p\in \partial\gamma\cap\partial K$, then $\gamma$ touches $\partial K$ orthogonally there.
\item If $p\in \partial\gamma\setminus\partial K$, then $d(p,\partial K)\geq 10b$.
\item $d \big( \gamma \setminus\bigcup_{p\in \partial\gamma}B_{b/10}(p),\partial K \big)\geq b/20$.
\end{itemize}
\end{enumerate}
Let $\mathcal{X}_{\mathbb{A},b}\subseteq \mathcal{D}_{\mathbb{A}}\times \mathcal{C}_{b}$ be the set of all $(\mathbb{A},b)$-controlled configurations.
\end{definition}

A map $\mathcal{F}:\mathcal{X}_{\mathbb{A},b}\rightarrow \mathcal{D}$ is called \emph{smooth} if for every $k\in \mathbb{N}$ and every smooth family $\phi:B^k\rightarrow \mathcal{X}_{\mathbb{A},b}$ (in the parametrized sense), $\mathcal{F}\circ \phi:B^k \rightarrow \mathcal{D}$ is a smooth family. Smoothness of maps between other combinations of the above spaces is defined similarly.

\begin{definition}[{Standard cap}]\label{def_stdcap}
A \emph{standard cap} is a smooth convex domain $K^{\textrm{st}}\subset \R^{n+1}$ such that
\begin{enumerate}[\hspace{3mm}(a)]
\item $K^{\textrm{st}}\cap \{x_1>0.01\}=B_1(0)\cap \{x_1>0.01\}$.
\item $K^{\textrm{st}}\cap \{x_1<-0.01\}$ is a solid round half-cylinder of radius $1$.
\item $K^{\textrm{st}}$ is given by revolution of a function $u^{\textrm{st}}:(-\infty,1]\rightarrow \mathbb{R}_+$.
\end{enumerate} 
\end{definition}

Definition \ref{def_stdcap} is consistent with \cite[Def 2.2, Prop 3.10]{HK_surgery}. For given $\alpha$ and $\beta$, we now fix a suitable standard cap $K^{\textrm{st}}=K^{\textrm{st}}(\alpha,\beta)$ which is $\alpha$-noncollapsed and $\beta$-uniformly $2$-convex.
 
\begin{definition}[Capped-off tube]\label{cap_off_def}
Let $\gamma:[a_0,a_1]\rightarrow \mathbb{R}^{n+1}$ be a connected $b$-controlled curve parametrized by arc-length.
\begin{enumerate}[\hspace{3mm}(1)]
\item A \emph{right capped-off tube} of radius $r<b/10$ around $\gamma$ at a point $p=\gamma(s_0)$, where $s_0\in [a_0+r,a_1]$, is the domain 
\begin{align}
CN^+_{r}(\gamma,p)=
\{\gamma(s)+ u^{\textrm{st}}\big(1-\tfrac{s_0-s}{r}\big)\!\cdot\!  \left(\bar{B}_r^{n+1}\cap \gamma'(s)^{\perp}\right) |\;s\in [a_0,a_1]\} \nonumber.
\end{align}
\item Similarly, a \emph{left capped-off tube} of radius $r<b/10$ around $\gamma$ at a point $p=\gamma(s_0)$, where $s_0\in [a_0,a_1-r]$, is the domain 
\begin{align}
CN^-_{r}(\gamma,p)=
\{\gamma(s)+ u^{\textrm{st}}\big(1-\tfrac{s-s_0}{r}\big)\!\cdot\! \left(\bar{B}_r^{n+1}\cap \gamma'(s)^{\perp}\right) |\;s\in [a_0,a_1]\} \nonumber.
\end{align}
\end{enumerate}  
\end{definition}

\begin{proposition}[Capped-off tubular neighborhoods]\label{mean_convex_capped_off}
There exists a constant $\zeta=\zeta(\alpha,\beta)>0$ such that for every $r<\zeta b$, every connected $b$-controlled curve $\gamma$, and every $p\in\gamma$ with distance at least $r$ from the right/left endpoint the capped-off tubular neighborhood $CN^\pm_{r} (\gamma,p)$ is $\beta$-uniformly $2$-convex.  
\end{proposition}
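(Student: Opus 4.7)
By symmetry it suffices to treat the right tube $CN^+_r(\gamma,p)$. Choose a parallel orthonormal frame $\{E_1(s),\ldots,E_n(s)\}$ in the normal bundle of $\gamma$ and parametrize $\partial CN^+_r(\gamma,p)$, off the tip, by
\begin{equation*}
F(s,\omega) = \gamma(s) + r\,\psi(s)\sum_{i=1}^n \omega_i\,E_i(s), \qquad \psi(s) := u^{\textrm{st}}\bigl(1-(s_0-s)/r\bigr),
\end{equation*}
with $\omega\in S^{n-1}$ and $s\in[a_0,s_0]$. The strategy is to view this as a scale-invariant perturbation of the straight-line model, whose $2$-convexity comes directly from the standard cap.

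When $\gamma$ is a straight line, the frame $E_i$ is constant, $F$ parametrizes a surface of revolution with profile $r\,u^{\textrm{st}}$, and the corresponding domain is congruent to $rK^{\textrm{st}}$ with tip at $\gamma(s_0)$. The standard cap is in fact $\beta$-uniformly $2$-convex with slack, in the sense that $\lambda_1+\lambda_2-\beta H\geq\delta$ on $\partial K^{\textrm{st}}$ for some $\delta=\delta(\alpha,\beta)>0$ --- indeed, at the cylindrical end one has $(\lambda_1+\lambda_2)/H=1/(n-1)>\beta$, so a gap follows by compactness after quotienting by cylinder translations (and can be arranged on the transition region by a mild adjustment of $u^{\textrm{st}}$ in the definition). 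Rescaling then gives $\lambda_1+\lambda_2-\beta H\geq\delta/r$ on the straight-line capped-off tube.

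For a general $b$-controlled $\gamma$, introduce the rescaled arclength $\tilde s=(s-s_0)/r$; the data of $\gamma$ then enters only through the dimensionless quantities $r\kappa\leq r/b$ and $r^2|\partial_s\kappa|\leq r^2/b^2$, both tending to zero with $r/b$. A direct computation of the first and second fundamental forms of $F$ in the natural frame will show that each principal curvature satisfies
\begin{equation*}
\bigl|\lambda_i^{\gamma}(s,\omega) - \lambda_i^{\mathrm{straight}}(s,\omega)\bigr| \leq C(\alpha,\beta)/b.
\end{equation*}
Combining with the previous bound gives $\lambda_1+\lambda_2-\beta H\geq \delta/r - C'(\alpha,\beta)/b$ on $\partial CN^+_r(\gamma,p)$, and choosing $\zeta=\zeta(\alpha,\beta)$ sufficiently small ensures $\lambda_1+\lambda_2\geq \beta H$ for every $r<\zeta b$.

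The main obstacle is the Weingarten-map computation of the previous paragraph: one has to verify that after the rescaling $\tilde s=(s-s_0)/r$ all $\gamma$-dependence truly packages into the dimensionless quantities $r\kappa$ and $r^2\partial_s\kappa$, with remaining implicit constants controlled only by derivatives of $u^{\textrm{st}}$. Particular care is needed at the junction between the cylindrical region $\psi\equiv 1$ and the transition region $|s-s_0|\lesssim r$ where the cap geometry takes over, but this is otherwise a standard tubular-neighborhood calculation.
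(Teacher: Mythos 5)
Your proposal is correct in outline, but it takes a genuinely different route from the paper. The paper's proof is a three-line blow-up argument: if the statement failed for a sequence of radii $r_i=b_i/i$, one recenters at the bad points and rescales by $r_i^{-1}$; since the rescaled curvature data $r_i\kappa$ and $r_i^2\partial_s\kappa$ tend to zero, the tubes subconverge to a round cylinder or a capped-off cylinder, both of which satisfy $\lambda_1+\lambda_2>\beta H$, a contradiction. You instead run the same rescaling quantitatively: a gap $\lambda_1+\lambda_2-\beta H\geq\delta/r$ for the exact model $rK^{\textrm{st}}$, plus a perturbation estimate $|\lambda_i^{\gamma}-\lambda_i^{\textrm{straight}}|\leq C/b$ coming from the Weingarten map of the tube parametrization. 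What your approach buys is an explicit (in principle) value of $\zeta=\delta/C'$; what it costs is that the central Weingarten computation is only asserted, not carried out, and it requires a separate graphical treatment near the tip where your parametrization degenerates (you do say ``off the tip,'' but the tip region still has to be covered, e.g.\ by noting it is a small perturbation of a round sphere of radius $r$ and hence strictly convex). Two further remarks. First, both proofs need the standard cap to satisfy the \emph{strict} inequality $\lambda_1+\lambda_2>\beta H$ (the paper asserts this for the limit models without comment); your observation that the paper's normalization only guarantees the non-strict inequality, and that one should fix $K^{\textrm{st}}$ to be $\beta'$-uniformly $2$-convex for some $\beta'>\beta$ (or equivalently build in a gap $\delta$), is a legitimate point that applies to the paper's proof as well, and your fix is fine. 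Second, your claim that all $\gamma$-dependence enters through $r\kappa$ and $r^2|\partial_s\kappa|$ is correct: the second derivatives of $F$ involve $\gamma''$ and the derivative of the parallel frame, contributing terms of size $O(b^{-1})+O(rb^{-2})=O(b^{-1})$ against a principal term of size $r^{-1}$, which is exactly the estimate you need.
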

\begin{proof}
Suppose there are sequences $\gamma_i$ of $b_i$-controlled curves and points $p_i$ as above such that $CN^\pm_{i^{-1}b_i}(\gamma_i,p_i)$ is not $\beta$-uniformly 2-convex at a point $q_i\in\partial CN^\pm_{i^{-1}b_i}(\gamma_i,p_i)$. Moving $q_i$ to the origin and scaling by $ib_i^{-1}$ we get  sub-convergence to either a cylinder or a capped of cylinder, both of which satisfy $\lambda_1+\lambda_2>\beta H$; this is a contradiction.
\end{proof}

\section{Transition functions and gluing principle}\label{sec_transition}

In this section, we construct some transition functions that will be needed later. We also explain a gluing principle that will be used frequently.

Fix a smooth function $\chi:\mathbb{R}\to[0,1]$ with $\spt({\chi'})\subset (0,1)$ such that $\chi(t)= 0$ for $t\leq 0$, $\chi(t)= 1$ for $t\geq1$, $\abs{\chi'} \leq 10$ and $|\chi''| \leq 100$. For $\eps>0$, we then set
\begin{equation}\label{eq_stepfn}
\chi_\eps(t)=\chi(t/\eps).
\end{equation}
\begin{proposition}[Transition function]\label{good_transition}
There exists a smooth function $\varphi:\mathbb{R} \rightarrow \mathbb{R}$ with $\spt({\varphi'})\subset (0,1)$ such that $\varphi(t)=-\frac{1}{2}$ for $t\leq 0$, $\varphi(t)=\frac{1}{4}$ for $t\geq 1$, and 
$Q[\varphi]:=t^2\varphi''+4t\varphi'+2\varphi\leq 0.95$ for all $t\in\mathbb{R}$.
\end{proposition}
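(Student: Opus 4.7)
The starting point is the algebraic identity
\[
Q[\varphi] \;=\; t^2\varphi''+4t\varphi'+2\varphi \;=\; (t^2\varphi)'',
\]
verified by two applications of the product rule to $t^2\!\cdot\!\varphi$. This suggests the substitution $F(t):=t^2\varphi(t)$, under which the problem reduces to finding a smooth $F:\R\to\R$ with $F(t)=-t^2/2$ on some open neighborhood of $(-\infty,0]$, $F(t)=t^2/4$ on some open neighborhood of $[1,\infty)$, and $F''(t)\le 0.95$ for every $t\in\R$. Given such $F$, Hadamard's lemma applied near $0$ (using $F(0)=F'(0)=0$) shows that $\varphi=F/t^2$ extends smoothly across $t=0$ with value $-1/2$, and similarly at $t=1$ with value $1/4$. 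The remaining requirements $\spt(\varphi')\subset(0,1)$ and the prescribed endpoint values then follow from the local formulas imposed on $F$.

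Writing $\rho:=F''|_{[0,1]}$, the task becomes: find smooth $\rho:[0,1]\to\R$ with $\rho\equiv -1$ in a right neighborhood of $0$, $\rho\equiv 1/2$ in a left neighborhood of $1$, $\rho\le 0.95$ throughout, and
\[
\int_0^1 \rho(s)\,ds = \tfrac12, \qquad \int_0^1 (1-s)\rho(s)\,ds = \tfrac14,
\]
where the two moment equalities, obtained by integrating $\rho$ once and twice with $F(0)=F'(0)=0$, encode $F'(1)=1/2$ and $F(1)=1/4$. The plan is to build $\rho$ from a piecewise constant template followed by a small convolution mollification. Since the constant $\rho\equiv 1/2$ satisfies both moment conditions automatically, the deviation $\delta:=\rho-1/2$ must have vanishing zeroth and first moments on $[0,1]$, equal $-3/2$ near $s=0$, equal $0$ near $s=1$, and satisfy $\delta\le 0.45$. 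My template takes $\delta=-3/2$ on a short initial interval $[0,\eps_0]$, $\delta$ equal to a positive plateau slightly below $0.45$ on an interval $[a_1,a_2]\subset(\eps_0,1)$, and $\delta$ equal to a moderate negative value on a third interval $[a_3,a_4]\subset(a_2,1-\eps_0)$ before returning to $0$. With six free parameters (four endpoints and two depths) and only two moment constraints to impose, the linear system has ample freedom and is easily solved while preserving strict inequality $\rho<0.95$; the mollification perturbs both integrals by an amount controlled by the kernel scale, which is absorbed by shifting one endpoint.

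Once $\rho$ is constructed, set $F(t):=\int_0^t (t-s)\rho(s)\,ds$ on $[0,1]$ and extend by $-t^2/2$ on $(-\infty,0]$ and by $t^2/4$ on $[1,\infty)$. The boundary conditions on $\rho$ ensure $C^\infty$ matching at $t=0$ and $t=1$, and finally $\varphi:=F/t^2$ is the desired transition function.

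The one substantive obstacle is the joint accommodation of the first-moment condition and the mandatory deep value $\delta=-3/2$ near $s=0$: the zeroth-moment deficit $\int_0^{\eps_0}\delta\,ds=-3\eps_0/2$ must be canceled by positive mass lying entirely in $[\eps_0,1]$, where $s$ is larger, so any single positive block inevitably overshoots in the first moment. This forces the auxiliary negative plateau in the interior of the template, and one must verify that this second negative region can be placed at moderate depth (not near $-3/2$) so that the template remains consistent with $\rho\le 0.95$ and admits a smoothing. Everything else is routine.
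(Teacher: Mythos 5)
Your proof is correct, and it takes a genuinely different route from the paper's. The key identity $Q[\varphi]=(t^2\varphi)''$ is valid, and it linearizes the whole problem: setting $F=t^2\varphi$, one needs $F\equiv -t^2/2$ near $(-\infty,0]$, $F\equiv t^2/4$ near $[1,\infty)$, and $F''\le 0.95$, which reduces to prescribing $\rho=F''$ on $[0,1]$ subject to the two moment conditions $\int_0^1\rho=\tfrac12$, $\int_0^1(1-s)\rho=\tfrac14$, the boundary plateaus $\rho\equiv-1$ near $0$ and $\rho\equiv\tfrac12$ near $1$, and the one-sided bound $\rho\le 0.95$. Your three-block template does solve this (e.g.\ $\eps_0=0.01$, a positive plateau of height $0.4$ on $[0.1,0.2]$, and a shallow negative block near $s\approx 0.24$ works out numerically), and your diagnosis of why a second negative block is forced by the first-moment constraint is exactly right; note that since $Q$ is only bounded from above there is in fact no constraint at all on how deep that block may be, so the "moderate depth" verification is automatic. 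Two cosmetic remarks: a symmetric mollifier preserves both moments exactly, so no endpoint shift is needed; and Hadamard's lemma is unnecessary since $F$ coincides with $-t^2/2$ (resp.\ $t^2/4$) on honest neighborhoods of $0$ (resp.\ $1$), making $\varphi$ locally constant there. By contrast, the paper solves the ODE $Q[\tilde\varphi]=\tfrac12$ explicitly, obtaining $\tilde\varphi(t)=\tfrac14-\tfrac{3\delta}{2t}+\tfrac{3\delta^2}{4t^2}$, and then patches this to the two constants by an iterative gluing lemma that adjusts second derivatives at very small scales (applied on the order of $10^5$ times). Your argument avoids that iteration entirely, is more self-contained, and makes transparent that the constant $0.95$ could be replaced by anything strictly greater than $\tfrac12$; what it does not provide is the reusable "gluing principle" (Remark \ref{gluing_principle}) that the paper extracts from its proof and invokes repeatedly in later sections.
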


To construct $\varphi$, we fix $\delta>0$ small, and start by solving the initial value problem
\begin{equation}
Q[\tilde{\varphi}]=\tfrac12,\qquad \tilde{\varphi}(\delta)=-\tfrac{1}{2},\, \tilde{\varphi}'(\delta)=0.
\end{equation}
The solution is given by
\begin{equation}\label{sol_of_ivp}
\tilde{\varphi}(t)=\frac{1}{4}-\frac{3}{2}\frac{\delta}{t}+\frac{3}{4}\frac{\delta^2}{t^2}.
\end{equation}

At $t=\delta$ this fits together with the constant function $\varphi\equiv-\frac{1}{2}$ up to first order. However, since $\tilde{\varphi}''(\delta)=\frac{3}{2\delta^2}\neq 0$, we have to adjust the derivatives of order two and higher to make the transition smooth.

\begin{lemma}[Higher derivatives transition iteration step]\label{Q_transition_step}
If $\varphi_0,\varphi_1$ are smooth functions satisfying $Q[\varphi_i]\leq 0.9$ in a neighborhood of $t_\ast$ and
\begin{enumerate}[\hspace{3mm}(1)]
\item $\varphi_{0}(t_\ast)=\varphi_{1}(t_\ast)$, $\varphi'_{0}(t_\ast)=\varphi'_{1}(t_\ast)$,
\item $|t_\ast^2\varphi_0''(t_\ast)-t_\ast^2\varphi_1''(t_\ast)|\leq 10^{-5}$,
 \end{enumerate}  
then, for every  $\eps>0$, there exists a smooth function $\varphi$ satisfying $Q[\varphi]\leq 0.95$ such that $\varphi\equiv \varphi_0$ for $t\leq t_\ast$ and $\varphi\equiv \varphi_1$ for $t\geq t_\ast+\eps$.
\end{lemma}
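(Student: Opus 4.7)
The plan is to take $\varphi$ to be the straight convex combination
\begin{equation}
\varphi(t) \;=\; \bigl(1 - \chi_{\varepsilon'}(t-t_\ast)\bigr)\varphi_0(t) \;+\; \chi_{\varepsilon'}(t-t_\ast)\,\varphi_1(t),
\end{equation}
where $\varepsilon' \in (0,\varepsilon]$ will be chosen small. The boundary conditions $\varphi\equiv\varphi_0$ for $t\leq t_\ast$ and $\varphi\equiv\varphi_1$ for $t\geq t_\ast+\varepsilon'$ and smoothness of $\varphi$ are automatic from the properties of $\chi_{\varepsilon'}$, so the entire task is to control $Q[\varphi]$ on the transition interval. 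Setting $\psi:=\varphi_1-\varphi_0$ and $\chi:=\chi_{\varepsilon'}(\cdot-t_\ast)$, a direct product-rule expansion in the zeroth, first and second derivatives of $\varphi$ gives
\begin{equation}
Q[\varphi] \;=\; (1-\chi)\,Q[\varphi_0] \;+\; \chi\,Q[\varphi_1] \;+\; E, \qquad E \;:=\; t^2\chi''\psi + 2t^2\chi'\psi' + 4t\chi'\psi.
\end{equation}
Since the convex combination is pointwise $\leq 0.9$ by hypothesis, and $E$ is supported in $[t_\ast,t_\ast+\varepsilon']$, it suffices to arrange $E\leq 0.05$ there.

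The main obstacle is the term $t^2\chi''\psi$: a priori $\chi''$ has size $\sim 100/(\varepsilon')^2$, which could blow up as $\varepsilon'\to 0$. The key point is that hypothesis~(1) gives $\psi(t_\ast)=\psi'(t_\ast)=0$, so Taylor expansion at $t_\ast$ yields
\begin{equation}
\psi(t_\ast+s) \;=\; \tfrac{1}{2}\delta_2 s^2 + O(s^3), \qquad \psi'(t_\ast+s) \;=\; \delta_2 s + O(s^2), \qquad s\in[0,\varepsilon'],
\end{equation}
where $\delta_2:=\varphi_1''(t_\ast)-\varphi_0''(t_\ast)$ and the remainders are bounded in terms of the third derivatives of $\varphi_0,\varphi_1$ on a fixed compact neighborhood of $t_\ast$. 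Substituting $s=\varepsilon' u$, so that $\chi'_{\varepsilon'}=\chi'(u)/\varepsilon'$ and $\chi''_{\varepsilon'}=\chi''(u)/(\varepsilon')^2$, the scale-invariant part of $E$ at $t\approx t_\ast$ becomes
\begin{equation}
t_\ast^2\,\delta_2\Bigl[\tfrac{1}{2}\chi''(u)\,u^2 + 2\chi'(u)\,u\Bigr] \;+\; O(\varepsilon'),
\end{equation}
and the additional $4t\chi'\psi$ contribution is already $O(\varepsilon')$.

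By hypothesis~(2) one has $|t_\ast^2\delta_2|\leq 10^{-5}$, while $|\tfrac{1}{2}\chi''(u)u^2+2\chi'(u)u|\leq \tfrac{1}{2}\cdot 100 + 2\cdot 10 = 70$ on $u\in[0,1]$, bounding the scale-invariant part by $7\cdot 10^{-4}$. Choosing $\varepsilon'$ small enough (depending on $t_\ast$ and on the $C^3$-norms of $\varphi_0,\varphi_1$ in a fixed neighborhood of $t_\ast$, which absorbs both the lower-order Taylor remainders and the mild dependence of $t^2$ on $t$) yields $|E|\leq 0.05$ and hence $Q[\varphi]\leq 0.95$. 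Shrinking $\varepsilon$ to such a smaller $\varepsilon'$ is harmless, since $\varphi\equiv\varphi_1$ on $[t_\ast+\varepsilon',\infty)$ implies the same on $[t_\ast+\varepsilon,\infty)$.
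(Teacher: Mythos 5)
Your proposal is correct and is essentially identical to the paper's proof: the same convex combination $(1-\chi_\eps)\varphi_0+\chi_\eps\varphi_1$, the same decomposition of $Q[\varphi]$ into the convex combination of $Q[\varphi_0],Q[\varphi_1]$ plus the error $E$, and the same use of hypotheses (1)--(2) via Taylor expansion to show $E$ is small for $\eps$ small (the paper likewise reduces to arbitrarily small $\eps$ at the outset). No gaps.
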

\begin{proof}
It suffices to prove the assertion for  $\eps$ as small as we want. In particular, we can assume that $\eps<t_\ast/2$ and that $\varphi_0$ and $\varphi_1$ are defined on $[t_\ast,t_\ast+2\eps)$. Let $D<\infty$ be an upper bound for $|\varphi_0'''(t)|+|\varphi_1'''(t)|$ for $t\in[t_\ast,t_\ast+\eps]$.
Set
\begin{equation*}
\varphi(t)=(1-\chi_\eps(t-t_\ast))\varphi_0(t)+\chi_\eps(t-t_\ast) \varphi_1(t),
\end{equation*}
where $\chi_\eps$ is the step function from \eqref{eq_stepfn}. Then
\begin{align*}
Q[\varphi](t) &=(1-\chi_\eps(t-t_\ast))Q[\varphi_0](t)+\chi_\eps(t-t_\ast) Q[\varphi_1](t)\\
&\quad+t^2\big(\chi_\eps''(t-t_\ast)(\varphi_1-\varphi_0)(t)+2\chi_\eps'(t-t_\ast)(\varphi_1-\varphi_0)'(t)\big)\\
&\quad+4t\chi_\eps'(t-t_\ast)(\varphi_1-\varphi_0)(t).
\end{align*} 
All terms in the second and third line can be made small, by taking $\eps$ small enough (depending on $t_\ast$ and $D$). For instance,
\begin{equation}
|t^2\chi_\eps''(t-t_\ast)(\varphi_1-\varphi_0)(t)| \leq t^2 \frac{100}{\eps^2}\left(\frac{10^{-5}}{2t_\ast^2}\eps^2+D\eps^3\right)\leq 10^{-2}
\end{equation}
for $\eps<\min\{t_\ast/2,10^{-5} t_\ast^{-2}D^{-1}\}$, and similarly for the other terms. Since we also have $(1-\chi_\eps(t-t_\ast))Q[\varphi_0](t)+\chi_\eps(t-t_\ast) Q[\varphi_1](t)\leq 0.9$, this implies the assertion.
\end{proof}

\begin{proof}[Proof of Proposition \ref{good_transition}]
We start with the function $\tilde{\varphi}$ from \eqref{sol_of_ivp}. It satisfies $\tilde{\varphi}''(\delta)=\frac{3}{2\delta^2}$ and $|\tilde{\varphi}'''| \lesssim \delta^{-3}$ on $[\delta/2,\delta]$. Then, by applying Lemma \ref{Q_transition_step} repeatedly (say $2\cdot 10^{5}$ times) on disjoint intervals of length $\eps\ll 10^{-5}\cdot \delta$, we can interpolate $\tilde{\varphi}$ to create a function $\bar{\varphi}$ which is of the form $c_1+c_2t$ on $[0,\delta/2]$ for constant $c_1,c_2$ with $|c_1+\frac{1}{2}|<\delta$ and $|c_2|<\delta$ while maintaining $Q[\bar{\varphi}]\leq 0.95$.\footnote{The point is that applying Lemma \ref{Q_transition_step} at very small scales we can change $\varphi''$ by a definite amount with hardly changing $\varphi$ and $\varphi'$. This process can be iterated until $\varphi''$ reaches the desired value provided that $Q[\varphi]\leq 0.9$ near every gluing point.} It is clear that this can be interpolated to a constant $c_\ell$ with $|c_\ell+\frac{1}{2}|<2\delta$. Similarly (and more easily), since $|\tilde{\varphi}(\frac{1}{2})-\frac{1}{4}|,\max_{i=1}^{3}\{|\tilde{\varphi}^{(i)}(\frac{1}{2})|\}\lesssim \delta$ we can adjust the function  on the right end, so that $\bar{\varphi}\equiv c_r$ for $t\geq 3/4$, where $|c_r-\frac{1}{4}|<\delta$. Finally, the function $\varphi$ is obtained by slightly scaling $\bar{\varphi}$.   
\end{proof}

The above proof illustrates a general gluing principle that will  be used  frequently in later sections in closely related situations:

\begin{remark}[Gluing principle]\label{gluing_principle}
If we have two 1-variable functions that satisfy a second order differential inequality and fit together to first order, then we can adjust the second and higher derivative terms at very small scales such that we preserve the differential inequality and hardly change the zeroth and first order terms.
\end{remark}

We close this section with three more propositions, the spirit and proofs of which closely resemble what we have seen above.

\begin{proposition}[Trimming the remainder]\label{Third_order_trim}
Let $u$ be a smooth function satisfying $P[u]:=1+u'^2-uu''>0$, and let $u_2$ be its second order Taylor approximation at $t_{\ast}$. Then, for every $\eps>0$, there exists a smooth function $v$ (depending smoothly on $\eps$ and $t_\ast$) satisfying $P[v]>0$ for $t\leq t_\ast$, such that $v\equiv u$ for $t\leq t_\ast-\eps$ and $v\equiv u_2$ for $t\geq t_\ast$. 
\end{proposition}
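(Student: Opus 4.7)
The plan is to exploit the fact that $u-u_2$ vanishes to third order at $t_\ast$, so that a standard cutoff between $u$ and $u_2$ produces only small errors in the operator $P$. Concretely, for a scale $\delta\in(0,\eps]$ to be chosen later, I would set
\begin{equation*}
v(t):=u(t)+\chi_\delta\!\bigl(t-(t_\ast-\delta)\bigr)\,\bigl(u_2(t)-u(t)\bigr),
\end{equation*}
where $\chi_\delta$ is the cutoff from \eqref{eq_stepfn}. Then by construction $v\equiv u$ on $(-\infty,t_\ast-\delta]$ and $v\equiv u_2$ on $[t_\ast,\infty)$; in particular $v\equiv u$ for $t\le t_\ast-\eps$, which meets the boundary conditions.

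The next step is the core estimate. By Taylor's theorem, on $[t_\ast-\delta,t_\ast]$ one has, with a constant $C$ depending only on $\|u'''\|_{C^0}$ near $t_\ast$,
\begin{equation*}
|u-u_2|\le C\delta^3,\qquad |(u-u_2)'|\le C\delta^2,\qquad |(u-u_2)''|\le C\delta.
\end{equation*}
Combining these with $|\chi_\delta^{(k)}|\lesssim\delta^{-k}$ and the product rule gives
\begin{equation*}
|v-u|\lesssim\delta^3,\qquad |v'-u'|\lesssim\delta^2,\qquad |v''-u''|\lesssim\delta
\end{equation*}
on $[t_\ast-\delta,t_\ast]$, with implicit constants depending only on $u$ on a fixed neighborhood. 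Substituting into $P[v]=1+(v')^2-vv''$ yields $P[v]=P[u]+O(\delta)$ there, precisely the type of conclusion predicted by the gluing principle of Remark \ref{gluing_principle}.

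Since $P[u]>0$ is continuous and hence has a positive lower bound on any fixed compact neighborhood of $t_\ast$, there is an intrinsic scale $\delta_0=\delta_0(u,t_\ast)>0$ such that $\delta\le\delta_0$ forces $P[v]>0$ on $[t_\ast-\delta,t_\ast]$; outside this interval $v=u$ so $P[v]=P[u]>0$ trivially. To ensure smooth dependence on $(\eps,t_\ast)$, I would fix once and for all a smooth function $\delta(\eps,t_\ast)$ that satisfies $\delta(\eps,t_\ast)\le\min\{\eps,\delta_0(u,t_\ast)\}$, obtained for instance by a mollified minimum; feeding this $\delta$ into the definition of $v$ above makes $v$ depend smoothly on $\eps$ and $t_\ast$ as required.

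The main obstacle is really a sanity check rather than a conceptual difficulty: one must verify that the worst term in $v''-u''$, namely $\chi_\delta''(u_2-u)$, is genuinely controlled. Its size is $O(\delta^{-2})\cdot O(\delta^3)=O(\delta)$, which is fine precisely because $u-u_2$ vanishes to order three at $t_\ast$; any weaker vanishing (e.g.\ just second order) would give an $O(1)$ contribution to $v''$ and destroy the perturbation argument. This is the reason the proposition is about the \emph{second order} Taylor approximation rather than the first, and it is the key structural point underlying the whole construction.
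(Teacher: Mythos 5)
Your proof is correct and follows essentially the same route as the paper's: the paper also defines $v=(1-\chi)u+\chi u_2$ with the cutoff from \eqref{eq_stepfn}, uses the third-order vanishing of $u-u_2$ at $t_\ast$ to get $|(v-u)^{(i)}|\leq C\eps^{3-i}$ for $i=0,1,2$, and concludes $P[v]>0$ for the scale small enough. Your only addition is the explicit decoupling of the transition scale $\delta$ from the given $\eps$ (the paper simply says ``choosing $\eps$ small enough''), which is a harmless and slightly more careful bookkeeping of the same argument.
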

\begin{proof}
Let $\chi_\eps$ be the step function from \eqref{eq_stepfn}, and let
\begin{equation}
v(t)=(1-\chi_{\eps}(t-t_\ast+\eps))u(t)+\chi_{\eps}(t-t_\ast+\eps)u_{2}(t). 
\end{equation}
Since $u$ and $u_2$ agree to second order at $t_{\ast}$, we have the estimate
\begin{equation}
\sup_{t\in[-\eps,0]}\abs{(v-u)^{(i)}(t_\ast+t)}\leq C \eps^{3-i}\qquad (i=0,1,2),
\end{equation}
where $C<\infty$ depends on a bound for $u$ and its first three derivatives on $[-\eps,0]$. Choosing $\eps$ small enough (depending on $C$ and $P[u]|_{t=t_\ast}$) the assertion follows.
\end{proof}
\begin{proposition}[Second transition function]\label{trasnsition_function_b}
For every $\eps>0$, there exists a smooth function $\psi:\mathbb{R} \rightarrow [0,1]$ with $\spt(\psi')\subset (0,1)$ and a constant $c>0$, such that $\psi(t)=1$ for $t\leq c$, $\psi(t)= 0$ for $t\geq 1$, and $|t^2\psi''|+|t\psi'|<\eps$. 
\end{proposition}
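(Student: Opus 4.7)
The key observation is that the quantity $|t\psi'| + |t^2\psi''|$ is \emph{scale-invariant} under $t \mapsto \lambda t$, so a naive rescaling of a bump function will not work: stretching the support out has no effect on these quantities. The right move is to rescale in $\log t$ rather than in $t$, since the operators $t\partial_t$ and $(t\partial_t)^2$ act like $\partial_s$ and $\partial_s^2$ in the variable $s=-\log t$. Hence a transition spread out over a $\log$-interval of length $L$ will give bounds of order $1/L$ on $|t\psi'|$ and $|t^2\psi''|$.

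The plan is then to fix $L>0$ large (to be chosen at the end in terms of $\eps$), set $c:=e^{-L}$, and define
\begin{equation*}
\psi(t) = \begin{cases} 1, & t \le c, \\ \chi\!\left(-\frac{\log t}{L}\right), & c \le t \le 1, \\ 0, & t \ge 1, \end{cases}
\end{equation*}
where $\chi$ is the fixed cutoff from \eqref{eq_stepfn}. Smoothness across $t=c$ and $t=1$ follows from $\chi(1)=1$, $\chi(0)=0$, and the vanishing of all derivatives of $\chi$ at $0$ and $1$ (since $\spt(\chi')\subset(0,1)$); moreover $\spt(\psi')$ is a closed subset of $(c,1)\subset (0,1)$, as required.

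A direct chain rule computation with $s=-\log t/L$ gives
\begin{equation*}
t\psi'(t) = -\frac{\chi'(s)}{L}, \qquad t^2\psi''(t) = \frac{\chi''(s)}{L^2} + \frac{\chi'(s)}{L}.
\end{equation*}
Using the built-in bounds $|\chi'|\le 10$ and $|\chi''|\le 100$, we therefore obtain
\begin{equation*}
|t^2\psi''(t)| + |t\psi'(t)| \;\le\; \frac{20}{L} + \frac{100}{L^2}
\end{equation*}
on $[c,1]$, while the left-hand side vanishes identically outside this interval. Choosing $L$ large enough so that $20/L+100/L^2<\eps$ (e.g.\ $L>\max(40/\eps,\sqrt{200/\eps})$) and then setting $c:=e^{-L}$ finishes the proof.

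There is essentially no obstacle once the logarithmic rescaling is identified; the only thing to be slightly careful about is smoothness at the two gluing points, which is automatic from the flatness of $\chi$ at $0$ and $1$.
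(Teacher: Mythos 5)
Your proof is correct. The underlying idea — that $|t\psi'|+|t^2\psi''|$ is scale-invariant, so the transition must be spread over a long interval in $\log t$ rather than in $t$ — is exactly the one the paper exploits, but your implementation is different: you compose the fixed cutoff $\chi$ with the substitution $s=-\log t/L$ to get a single smooth ramp, whereas the paper builds a staircase, concatenating $N=\lceil 10000/\eps\rceil$ rescaled copies of $\chi$ on the dyadic intervals $[2^{-k},2^{1-k}]$, each dropping $\psi$ by $1/N$; this is in effect a piecewise-dyadic discretization of your logarithmic reparametrization, with $c=2^{-N}$ playing the role of your $e^{-L}$. Your version has the advantage of a one-line chain-rule verification of the bound $20/L+100/L^2$ and of smoothness (which, as you note, is automatic from $\spt(\chi')\subset(0,1)$), while the paper's version only ever uses $\chi$ at unit scale and so avoids introducing the logarithm, at the cost of checking matching of the pieces at the dyadic junctions. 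Both yield the required constant $c>0$ and the bound $<\eps$, so the two proofs are interchangeable for the purposes of Proposition \ref{mixed_curvature_deform}.
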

\begin{proof}
Let $N=\lceil 10000/\eps\rceil$. Then the function
\begin{equation*}
\psi(t):=
\begin{cases} 
      1 & t\leq 2^{-N} \\
      \tfrac{k-1}{N}+\tfrac{1}{N}\chi(2-2^{k}t) & t\in[2^{-k},2^{1-k}]\quad (k=1,\ldots, N)\\
      0 & t\geq 1
   \end{cases}
\end{equation*}
has the desired properties.
\end{proof}

\begin{proposition}\label{linear_interpolation}
Let $u_i(x)\in[0,1]$  ($i=0,1$) be two $C^2$-functions such that $u_i'' < 1$. Then setting $u_t(x)=tu_1(x)+(1-t)u_0(x)$ we have $P[u_t]:=1+(u_t')^2-u_tu_t''>0$.
\end{proposition}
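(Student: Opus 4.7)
The plan is to verify the pointwise inequality $1 + (u_t')^2 - u_t u_t'' > 0$ by a direct case analysis on the sign of $u_t''$, after first noting that convex combinations preserve both relevant bounds.

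First, I would observe that for every $t \in [0,1]$ we have
\[
u_t(x) = t u_1(x) + (1-t) u_0(x) \in [0,1],
\]
since convex combinations of $[0,1]$-valued functions stay in $[0,1]$. Likewise,
\[
u_t''(x) = t u_1''(x) + (1-t) u_0''(x) < t\cdot 1 + (1-t)\cdot 1 = 1,
\]
so the strict bound $u_t'' < 1$ is also preserved by linear interpolation.

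Now fix a point $x$. If $u_t''(x) \leq 0$, then since $u_t(x) \geq 0$ the product satisfies $u_t(x) u_t''(x) \leq 0$, so
\[
P[u_t](x) = 1 + (u_t'(x))^2 - u_t(x) u_t''(x) \geq 1 > 0.
\]
If instead $u_t''(x) > 0$, I use $u_t(x) \leq 1$ together with $u_t''(x) < 1$ to estimate
\[
u_t(x) u_t''(x) < u_t(x) \cdot 1 \leq 1,
\]
which yields $1 - u_t(x) u_t''(x) > 0$, and hence $P[u_t](x) > (u_t'(x))^2 \geq 0$. In either case $P[u_t](x) > 0$, which is the desired conclusion.

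I do not expect any real obstacle: the argument is an entirely elementary pointwise estimate, and the only input beyond the hypotheses is the trivial observation that both $u_t \in [0,1]$ and $u_t'' < 1$ pass through the linear interpolation. Everything relies on the fact that $u \cdot u''$ cannot reach $1$ when one factor is strictly below $1$ and the other is bounded by $1$ in absolute value, which is exactly the content of the two cases above.
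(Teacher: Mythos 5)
Your argument is correct and is exactly the elementary pointwise verification that the paper leaves implicit with the single word ``clear.'' The only microscopic imprecision is that the chain $u_t u_t'' < u_t\cdot 1 \leq 1$ requires $u_t>0$ for the first strict inequality to hold; when $u_t(x)=0$ one has $u_t u_t''=0<1$ directly, so the conclusion is unaffected.
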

\begin{proof}
This is clear.  
\end{proof}

\section{Attaching strings to 2-convex domains}\label{sec_gluing}

The goal of this section is to prove the existence of a gluing map, that preserves $2$-convexity, depends smoothly on parameters,  preserves symmetries, and is given by an explicit model in the case of round balls.

Recall that we denote by $\mathcal{X}_{\mathbb{A},b}$ the space of $(\mathbb{A},b)$-controlled configurations of domains and curves (see Definition \ref{conrolled_conf}), and by $\mathcal{D}$ the space of all 2-convex smooth compact domains (see Definition \ref{def_contr_dom}).

\begin{theorem}[Gluing map]\label{thm_attachstrings}
There exists a smooth, rigid motion equivariant map
\begin{equation*}
\mathcal{G}:\mathcal{X}_{\mathbb{A},b}\times (0,\bar{r})\rightarrow \mathcal{D},\quad ((D,\gamma),r)\mapsto \mathcal{G}_{r}(D,\gamma),
\end{equation*}
where $\bar{r}=\bar{r}(\mathbb{A},b)>0$ is a constant, and a smooth increasing function $\delta:(0,\bar{r})\rightarrow \mathbb{R}_+$ with $\lim_{r\rightarrow 0}\delta(r)=0$, with the following significance:
\begin{enumerate}[\hspace{3mm}(1)]
\item $\mathcal{G}_{r}(D,\gamma)$ deformation retracts to $D\cup \gamma$.
\item We have
\begin{equation*}
\qquad\quad\mathcal{G}_{r}(D,\gamma)\setminus\bigcup_{p\in \partial\gamma} B_{\delta(r)}(p) = D \cup  N_{r}(\gamma)\setminus \bigcup_{p\in \partial \gamma} B_{\delta(r)}(p),
\end{equation*}
where $N_{r}(\gamma)$ denotes the solid $r$-tubular neighborhood of $\gamma$. The collection of balls $\{B_{\delta(r)}(p)\}_{p\in\partial \gamma}$ is disjoint.
\item  If $p\in \partial \gamma\setminus\partial D$ and $\gamma_p$ denotes the connected component of $\gamma$ containing $p$ as its endpoint (see Definition \ref{conrolled_conf}), then
\begin{align*}
\mathcal{G}_{r}(D,\gamma)\cap B_{\delta(r)}(p)= 
CN_{r}(\gamma_p,p)\cap B_{\delta(r)}(p),  
\end{align*}
where $CN_{r}(\gamma_p,p)$ denotes the capped-off solid $r$-tube around $\gamma_p$ at $p$ (see Definition \ref{cap_off_def}). 
\item The construction is local: If $(D\cup\gamma)\cap B_{\delta(r)}(p)=(\tilde{D}\cup\tilde{\gamma})\cap B_{\delta(r)}(p)$ for some $p\in \partial\gamma$, then $\mathcal{G}_r(D,\gamma)\cap B_{\delta(r)}(p)=\mathcal{G}_r(\tilde{D},\tilde{\gamma})\cap B_{\delta(r)}(p)$.
\end{enumerate}
Moreover, in the special case that $D\cap B_{\delta(r)}(p)=\bar{B}_R(q)\cap B_{\delta(r)}(p)$ for some $q\in\mathbb{R}^{n+1}$, $R>r$, and if $\gamma \cap B_{\delta(r)}(p)$ is a straight line, we have
\begin{equation*}
\mathcal{G}_{r}(D,\gamma) \cap B_{\delta(r)}(p)=\left(R\cdot TC_{\varrho^{-1}(r/R)}+q\right) \cap B_{\delta(r)}(p),
\end{equation*}
where $T$ is an orthogonal transformation, and where the domain $C_\ast$ and the function $\varrho$ are from the explicit model in Proposition \ref{gluing_sphere_cylinder} below. 
\end{theorem}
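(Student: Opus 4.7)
The plan is to build $\mathcal{G}_r(D,\gamma)$ by combining three pieces that agree on their overlaps: away from the endpoints of $\gamma$ we keep $D\cup N_r(\gamma)$ unchanged, near a free endpoint $p\in\partial\gamma\setminus\partial D$ we cap off the tube using Definition \ref{cap_off_def}, and near an attached endpoint $p\in\partial\gamma\cap\partial D$ we insert a rotationally symmetric joint interpolating between $\partial D$ and a cylinder of radius $r$. Properties (2) and (4) then hold by design, since the only modifications occur inside the disjoint balls $\{B_{\delta(r)}(p)\}_{p\in\partial\gamma}$ (disjointness is guaranteed by condition (c) of being $b$-controlled once $\delta(r)<b$), and each modification depends only on the restriction of $(D,\gamma)$ to that ball. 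Property (1) is immediate once the construction is in place, since each of the three pieces deformation retracts to its skeleton.

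For a free endpoint there is nothing to do beyond invoking earlier results: the hypothesis $d(p,\partial D)\geq 10b$ together with the smallness of $\delta(r)$ ensures that $D$ plays no role inside $B_{\delta(r)}(p)$, and Proposition \ref{mean_convex_capped_off} tells us that for $r<\zeta b$ the domain $CN_r(\gamma_p,p)$ is $\beta$-uniformly $2$-convex. The capped-off tube matches $N_r(\gamma)$ across $\partial B_{\delta(r)}(p)$ because $u^{\mathrm{st}}\equiv 1$ on $(-\infty,-0.01]$, so the transition is smooth. This settles case (3) directly.

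The main work is at an attached endpoint $p\in\partial\gamma\cap\partial D$. Choose coordinates with $p=0$ and $e_1$ equal to the outward unit normal of $\partial D$ at $p$; by the orthogonality assumption this is also the initial tangent of $\gamma$. Since $\gamma\in \mathcal{C}_b$ and $D\in\mathcal{D}_{\mathbb{A}}$, inside $B_{\delta(r)}(p)$ the curve $\gamma$ is $C^2$-close to the ray $\{te_1:t\geq 0\}$ and $\partial D$ is a graph $x_1=-f(y)$, $y\in\mathbb{R}^n$, with $f(0)=0$, $\nabla f(0)=0$ and Hessian bounds coming from $\mathbb{A}$. I first straighten $\gamma$ to the $x_1$-axis by a diffeomorphism supported in $B_{\delta(r)}(p)$; for $\delta(r)\ll b$ this is a small $C^2$-perturbation and preserves $2$-convexity by openness. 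Next, I radially symmetrize the graph $f$ to $\bar f(|y|)$ using the linear interpolation of Proposition \ref{linear_interpolation}, whose inequality $P[\cdot]>0$ is exactly what is needed so each intermediate profile remains strictly $2$-convex (after rescaling, these profiles are $C^2$-close to a piece of a sphere). After these two reductions, $\partial D$ agrees near $p$ with a piece of a ball of radius $R\sim 1/\bar f''(0)$ and $\gamma$ is a straight segment along its outer normal, which places us exactly in the setting of the explicit model of Proposition \ref{gluing_sphere_cylinder}: we paste in a rescaled copy of $C_{\varrho^{-1}(r/R)}$ to bridge the sphere of radius $R$ to the cylinder of radius $r$. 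The matching from this joint back to the outer region is done with the transition function $\varphi$ of Proposition \ref{good_transition}; the inequality $Q[\varphi]\leq 0.95$ is tailored precisely so that the corresponding surface of revolution remains $2$-convex, while the gluing principle of Remark \ref{gluing_principle} allows us to adjust the second and higher order Taylor data at scales $\eps\ll\delta(r)$ without disturbing the zeroth and first order matching. Finally, Proposition \ref{Third_order_trim} is applied at the outermost transition to make the profile exactly cylindrical outside the joint, so that the construction glues seamlessly to the region $D\cup N_r(\gamma)$ described in property (2).

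Smoothness of $\mathcal{G}$ in $((D,\gamma),r)$ follows because every step above is a smooth composition of intrinsic operations: picking adapted Fermi coordinates, forming graphs, linearly interpolating in function space, and convolving with fixed transition functions. Rigid motion equivariance is automatic from the same observation since no auxiliary frame is chosen that does not transform equivariantly. In the special case that $D\cap B_{\delta(r)}(p)$ is already a piece of a round ball and $\gamma$ is already a straight segment, the straightening and symmetrization steps are the identity, so the construction reduces tautologically to $R\cdot TC_{\varrho^{-1}(r/R)}+q$, giving the moreover clause. I expect the main obstacle to be the bookkeeping at the attached endpoint, where two very different scales meet (ambient curvature $\sim 1$ versus tube curvature $\sim 1/r$) and one must verify that each interpolation step preserves $\lambda_1+\lambda_2>0$; this is precisely what the sharp inequalities $Q[\varphi]\leq 0.95$ and $P[\cdot]>0$ from Section \ref{sec_transition} are engineered to provide.
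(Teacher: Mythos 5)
Your decomposition into three regimes (far from endpoints, free endpoints, attached endpoints) matches the paper, and the free-endpoint treatment via Proposition \ref{mean_convex_capped_off} is essentially correct. The gap is in the attached-endpoint case, precisely where the real work lives. You propose to \emph{radially symmetrize} the graph $f$ of $\partial D$ near $p$ to $\bar f(|y|)$ ``using the linear interpolation of Proposition~\ref{linear_interpolation},'' concluding that the result is a piece of a round ball. But Proposition~\ref{linear_interpolation} is a statement about single-variable profiles of surfaces of revolution, where the operator $P[u]=1+u'^2-uu''$ encodes $2$-convexity only for that rotationally symmetric ansatz. It says nothing about an $n$-variable graph $x_1=-f(y)$; indeed by \eqref{sc_fun_form_graph} the second fundamental form of $\graph(f_t)$ is not affine in $f_t$ once $|Df_t|\neq 0$, so linear-in-$t$ interpolation does not obviously preserve $\lambda_1+\lambda_2>0$. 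Worse, the spherical average $\bar f(|y|)$ need not even bound a $2$-convex domain near $p$ when the principal curvatures $\lambda_1\leq\ldots\leq\lambda_n$ of $\partial D$ at $p$ are genuinely mixed (e.g.\ $\lambda_1<0$), which is allowed because $D$ is only $\beta$-uniformly $2$-convex, not convex. And even if one could symmetrize, a rotationally symmetric graph is not a sphere piece without a further curvature-equalizing step, so ``$\partial D$ agrees near $p$ with a piece of a ball of radius $R\sim 1/\bar f''(0)$'' does not follow.

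This is exactly where the paper's two substantive lemmas enter, and your outline has no replacement for them. First, Proposition~\ref{approx_parab} replaces $\partial D$ in the annulus $B_{\delta(r)}\setminus B_{\delta(r)/2}$ by the second-order model $\graph(u^\lambda)$, controlling the error through the cubic bounds \eqref{eq_quad_err_est} coming from the $\mathbb{A}$-control. Then Proposition~\ref{mixed_curvature_deform} deforms $\graph(u^\lambda)$ with mixed curvatures $\lambda_1,\ldots,\lambda_n$ to the spherical $\graph(u^{(\underline\lambda,\ldots,\underline\lambda)})$ by interpolating in the eigenvalues via a \emph{spatial} transition $\eta(r)=\psi(r/\delta)$, where $\psi$ is the special transition function from Proposition~\ref{trasnsition_function_b}; $2$-convexity is preserved by feeding the $C^1$-bounds on $r\eta',\ r^2\eta''$ into the modulus-of-continuity estimate for $S(\cdot)$. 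Only after these two reductions is the local picture literally ``round sphere plus straight line,'' at which point Proposition~\ref{gluing_sphere_cylinder} is applied by rescaling (with $\delta=\varrho^{-1}(\underline\lambda r)$), giving the explicit ``moreover''-formula. Two smaller points: (a) your appeals to Propositions~\ref{good_transition} and~\ref{Third_order_trim} at the level of the gluing map are redundant, since they are already used internally to construct the model $C_\delta$; (b) the curve-straightening by an unspecified diffeomorphism ``preserving $2$-convexity by openness'' is unquantified — the second derivatives of such a diffeomorphism are $O(b^{-1})$, which is fixed and not small — whereas the paper's bending lemma (Proposition~\ref{bending_tubes}) is applied to the \emph{free} ends with explicit curvature bounds on the bent curve.
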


We say that $\mathcal{G}_{r}(D,\gamma)$ is obtained from $D$ by \emph{attaching strings} of radius $r$ around $\gamma$.\\

\begin{figure}[H]
\includegraphics[width=12cm]{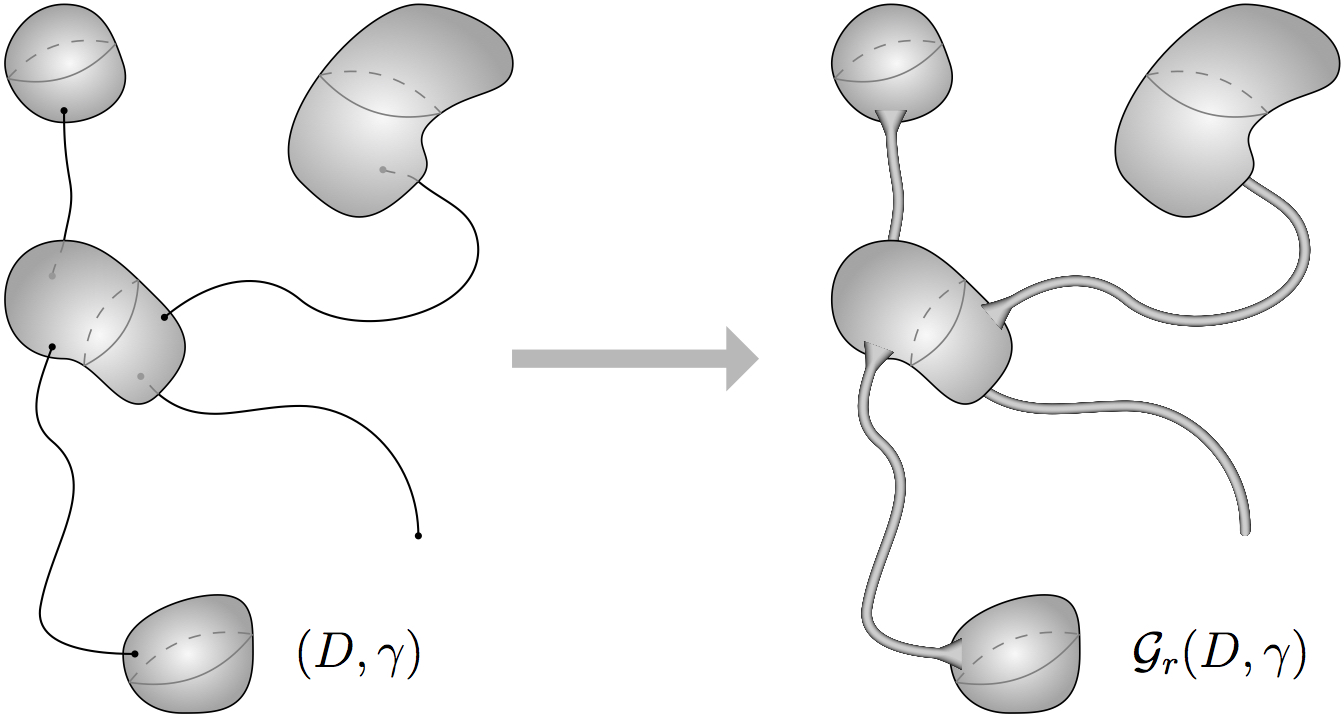}
\caption{An illustration of the gluing map.}
\end{figure}

To construct the gluing map, we start with the round model case.

\begin{proposition}[Gluing ball and cylinder]\label{gluing_sphere_cylinder}
There exists a smooth one-parameter family of smooth, 2-convex, rotationally symmetric, contractible domains $\{ C_\delta\subset \R^{n+1}\}_{\delta\in(0,1)}$, such that
\begin{enumerate}[\hspace{3mm}(1)]
\item $C_\delta \cap \{x_1 \leq 1-\delta\}= \{(x_1,\ldots,x_{n+1})\in\mathbb{R}^{n+1}\, | \sum_{i=1}^{n+1} x_i^2 \leq 1,\,x_1 \leq 1-\delta\}$,
\item $C_\delta \cap \{x_1 \geq 1+\delta\}= \{(x_1,\ldots,x_{n+1})\in\mathbb{R}^{n+1}\, | \sum_{i=2}^{n+1}x_i^2\leq\varrho(\delta)^2,\,x_1 \geq 1+\delta\}$,
\end{enumerate}
where $\varrho:(0,1)\rightarrow \mathbb{R}_+$ is smooth and increasing with $\lim_{\delta\to 0}\varrho(\delta)= 0$.

Moreover, if we express $\partial C_\delta$ as surface of revolution of a function $u_\delta(x)$, then we have
\begin{equation}\label{eq_sec_der_less_one}
u''_{\delta} <1 \qquad\textrm{ for all } \delta\geq 0.99.
\end{equation} 
\end{proposition}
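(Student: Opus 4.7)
Parametrize $C_\delta$ as the surface of revolution about the $x_1$-axis of a smooth profile $u_\delta\colon \R\to [0,\infty)$. At points where the axial principal curvature is the smallest, 2-convexity of the domain is equivalent to the pointwise differential inequality
\[
P[u_\delta] := 1 + (u_\delta')^2 - u_\delta u_\delta'' > 0,
\]
and is automatic elsewhere (there the two smallest principal curvatures are both circular and equal to $1/(u_\delta\sqrt{1+(u_\delta')^2})>0$). In particular, $P>0$ holds on every concave piece of the profile. The prescribed boundary behavior translates to $u_\delta(x_1)=\sqrt{1-x_1^2}$ for $x_1\le 1-\delta$ and $u_\delta\equiv \varrho(\delta)$ for $x_1\ge 1+\delta$, so the task reduces to constructing a smoothly $\delta$-dependent transition on $[1-\delta,1+\delta]$ preserving $P>0$, together with a suitable choice of $\varrho$.

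Since the sphere's derivative at $x_1=1-\delta$ is $-(1-\delta)/\sqrt{2\delta-\delta^2}\sim -1/\sqrt{2\delta}$ while the cylinder's is zero, $\int_{1-\delta}^{1+\delta}u_\delta''\,dx>0$, so a convex subinterval is forced. I will decompose the transition into three pieces: (i) a short concave continuation of the sphere near $x_1=1-\delta$, on which $P>0$ holds automatically; (ii) a convex interior in which $u_\delta''$ is positive and of order at most $\delta^{-3/2}$, arranged so that $u_\delta'$ rises monotonically from the sphere value up toward $0$ while $u_\delta$ shrinks toward $\varrho(\delta)$; (iii) a final piece obtained by gluing on a rescaled copy of the standard cap (Definition~\ref{def_stdcap}) of scale $\varrho(\delta)$, whose $\beta$-uniform 2-convexity propagates under rescaling, to effect the final matching with the cylinder.

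The pieces are joined smoothly using Proposition~\ref{Third_order_trim} (to replace a function by its second-order Taylor polynomial at a gluing point), Proposition~\ref{good_transition} together with the Gluing Principle (Remark~\ref{gluing_principle}) (to adjust higher derivatives without disturbing zeroth- and first-order matching), and Proposition~\ref{trasnsition_function_b} for logarithmic cutoffs when needed. Smoothness in $\delta$ follows because each of these propositions produces families depending smoothly on the parameters; I will choose those parameters as smooth functions of $\delta$. The choice of $\varrho$ is constrained by piece (ii): a concrete safe choice is $\varrho(\delta)=\delta^{2}/100$, smoothly increasing with $\varrho(\delta)\to 0$. The condition $u_\delta''<1$ for $\delta\ge 0.99$ is then automatic outside the transition (the sphere part has $u_\delta''=-1/u_\delta^3<0<1$ and the cylinder part has $u_\delta''=0$) and easy to arrange inside, since the transition interval then has length at least $1.98$, providing ample room for a gentle profile.

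The main obstacle is piece (ii), where the inequality $u_\delta u_\delta''<1+(u_\delta')^2$ is tight: the average value of $u_\delta''$ required for the full slope change is of order $\delta^{-3/2}$, comparable to the upper bound $(1+(u_\delta')^2)/u_\delta$ in the relevant range. Maintaining strict positivity requires distributing $u_\delta''$ so that it is concentrated where $(u_\delta')^2$ is still appreciable (so the right-hand side is generous) and made small by the time $u_\delta$ approaches $\varrho(\delta)$. This tight balance is precisely why $\varrho(\delta)$ must be substantially smaller than $\sqrt{\delta}$, and why the trim/glue tools of Section~\ref{sec_transition} are indispensable for assembling the multi-stage construction cleanly.
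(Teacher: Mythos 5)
Your setup is right and matches the paper's: reduce $2$-convexity of the rotationally symmetric domain to the profile inequality $P[u]=1+u'^2-uu''>0$, keep the sphere and cylinder pieces exactly, and build a transition on $[1-\delta,1+\delta]$ using the tools of Section~\ref{sec_transition}. You also correctly locate the crux in the forced convex subinterval, where the required average of $u''$ (of order $\delta^{-3/2}$) is comparable to the allowed bound $(1+u'^2)/u$. But this is exactly where your argument stops: you state what the convex piece must accomplish ("distribute $u''$ so that it is concentrated where $(u')^2$ is still appreciable and made small by the time $u$ approaches $\varrho(\delta)$") without exhibiting any profile or verifying $P>0$ on it, and you assert $\varrho(\delta)=\delta^2/100$ with no computation backing it. That is the entire content of the proposition, so this is a genuine gap rather than an omitted routine detail. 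The paper closes it with an explicit formula: after replacing the sphere near $t_\ast=1-\tfrac{10^9-1}{10^9}\delta$ by its second-order Taylor parabola (Proposition~\ref{Third_order_trim}) and flipping the quadratic coefficient from $-\tfrac{1}{2\delta_1^3}$ to $+\tfrac{1}{4\delta_1^3}$ via the transition function of Proposition~\ref{good_transition} (whose inequality $Q[\varphi]\le 0.95$ is engineered precisely for this step), one continues with the upward parabola $p(t)=\delta_1-\tfrac{\sqrt{1-\delta_1^2}}{\delta_1}t+\tfrac{1}{4\delta_1^3}t^2$, which satisfies the identity $P[p]=\tfrac{1}{2\delta_1^3}\,p(t)\ge\tfrac12$ and bottoms out at the positive value $\delta_1^3$. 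Note that in this construction $u''$ is \emph{constant} on the convex piece, not tapered off; positivity of $P$ comes from $P[p]$ being proportional to $p$ itself, and $\varrho(\delta)\approx\delta_1^3\approx(2\delta)^{3/2}$ is an output of the construction, not a free choice. Some comparable explicit mechanism is what your proof is missing.

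Two smaller points. First, your piece (iii) is misdirected: a rescaled standard cap would close off the tube, whereas what is needed at the right end is a transition from the point where $u'=0$, $u=\varrho(\delta)>0$ to the constant cylinder profile; this is the easy step and is handled by the gluing principle (Remark~\ref{gluing_principle}) alone. Second, your parenthetical about where $P[u]>0$ is "automatic" is unnecessary: a short computation shows $\lambda_1+\lambda_2>0$ for the surface of revolution is equivalent to $P[u]>0$ at every point, which is how the paper states it.
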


\begin{proof}[{Proof of Proposition \ref{gluing_sphere_cylinder}}]
If $\partial C_\delta\subset \R^{n+1}$ is the surface of revolution of a smooth positive function $u_\delta(x)=u(x)$, the 2-convexity condition becomes
\begin{equation}\label{mean_convex_ineq}
P[u]=1+u'^2-uu''>0\, .
\end{equation}
We will construct a suitable function $u(x)$ step by step.

We start with $u(x)=\sqrt{1-x^2}$ for $x\leq 1-\delta$.

Next, we  transition  $u$ to a parabola around $t_{\ast}=1-\frac{10^9-1}{10^9}\delta$ according to Proposition \ref{Third_order_trim} (with $\eps=\frac{1}{2\cdot 10^9}\delta$) so that condition \eqref{mean_convex_ineq} is preserved, and such that, setting $\delta_1=u(t_{\ast})$,
\begin{equation}
u(t_{\ast}+t)=\delta_1-\frac{\sqrt{1-\delta_1^2}}{\delta_1}t-\frac{1}{2\delta_1^3}t^2
\end{equation}
in a right neighborhood of $t=0$.

Next, we will deform the coefficient of the quadratic term. To this end, let $a(t)=\tfrac{1}{\delta_1^3}\varphi(\tfrac{t}{\delta_2})$, where $\varphi(\cdot)$ is the transition function from Proposition \ref{good_transition}, and $\delta_2>0$ is a small parameter to be determined below. Note that $a \equiv -\frac{1}{2\delta_1^3}$ in a neighborhood of $t=0$, and $a \equiv +\frac{1}{4\delta_1^3}$ in a neighborhood of $t=\delta_2$. Furthermore, using the inequality $Q[a]\leq 0.95/\delta_1^3$ we see that
\begin{equation}
P\left[\delta_1-\tfrac{\sqrt{1-\delta_1^2}}{\delta_1}t+a(t)t^2\right]>0
\end{equation}
for all $t\in[0,\delta_2]$, provided we choose $\delta_2\ll \delta_1$.

We can thus continue with the parabola
\begin{equation}
p(t)=\delta_1-\frac{\sqrt{1-\delta_1^2}}{\delta_1}t+\frac{1}{4\delta_1^3}t^2\qquad (t\geq\delta_2).
\end{equation}
Observe that the minimum of $p(t)$ is at $t_{\min}=2\delta_1^2\sqrt{1-\delta_1^2}$, and that
\begin{equation}\label{eq_min_p}
p(t_{\min})=\delta_1^3>0.
\end{equation}
Moreover, the parabola $p(t)$ indeed satisfies condition \eqref{mean_convex_ineq}, since
\begin{equation}
P[p]=\frac{1}{8\delta_1^6}t^2-\frac{\sqrt{1-\delta_1^2}}{2\delta_1^4}t+\frac{1}{2\delta_1^2}=\frac{1}{2\delta_1^3}\,p(t) \geq \frac12.
\end{equation}
Finally, noticing that once we were able to achieve $u'=0$ with $u>0$, finding a canonical mean convex extension to the cylinder is easy by the gluing principle (c.f. Remark \ref{gluing_principle}), this finishes the construction of $u$.

Tracing trough the steps, it is clear that $u_\delta$ can be constructed depending smoothly on $\delta$. 
From \eqref{eq_min_p} we see that $\lim_{\delta\to 0}\varrho(\delta)=0$.
Finally, it is clear by construction that $u''_{\delta} <1$ for all $\delta\geq 0.99$.
\end{proof}

We will now reduce the general case to the rotationally symmetric one. For $\lambda=(\lambda_1,\ldots,\lambda_{n})\in \mathbb{R}^n$, letting $\underline{\lambda}=\frac{1}{n}\sum \lambda_i$, we consider the function
\begin{equation}
u^{\lambda}(x_1,\ldots,x_n):=\underline{\lambda}^{-1}\left(1-\sqrt{1-\underline{\lambda}\sum_{i=1}^{n}\lambda_ix_i^2}\right).
\end{equation}

\begin{proposition}[Deforming mixed curvatures]\label{mixed_curvature_deform}
There exists a constant $c=c(\mathbb{A})>0$, and a smooth $(n+1)$-parameter family of smooth 2-convex contractible domains $C_\delta^\lambda$ in $\mathbb{R}^{n+1}$, that have a boundary component in $\{\sum_{i=1}^{n}x_i^2\leq 2{\delta}\}$ satisfying
\begin{enumerate}[\hspace{3mm}(1)]
\item $\partial C_\delta^{\lambda} \cap \{\delta \leq \sum_{i=1}^nx_i^2 \leq 2\delta\}= \graph\left(u^{\lambda}\right)$,
\item $\partial C_\delta^{\lambda} \cap \{\sum_{i=1}^nx_i^2 \leq c\delta \}=\graph\left(u^{(\underline{\lambda},\ldots, \underline{\lambda})}\right)$.
\end{enumerate}
The family is defined for $\lambda=(\lambda_1,\ldots,\lambda_n)$ with $-C_A \leq \lambda_1\leq \lambda_2 \leq \ldots \leq \lambda_n \leq C_A$ and $\lambda_1+\lambda_2\geq \beta c_H$ and $\delta\in (0,\bar{\delta})$, where $\bar{\delta}=\bar{\delta}(\mathbb{A})>0$. 

\noindent Moreover, if $\lambda_1=\ldots =\lambda_n$, then $\partial C_\delta^{\lambda}= \graph\left(u^{\lambda}\right)\cap \{\sum_{i=1}^{n}x_i^2 < 2{\delta}\}$.
\end{proposition}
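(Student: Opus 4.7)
The plan is to interpolate radially between $u^\lambda$ on the outer annulus and $u^{(\underline{\lambda},\ldots,\underline{\lambda})}$ on the inner disk along a linear path of eigenvalue vectors that remains uniformly inside the 2-convex cone. Set $\lambda(t)=(1-t)\lambda+t(\underline\lambda,\ldots,\underline\lambda)$ for $t\in[0,1]$, so that $\underline{\lambda(t)}=\underline\lambda$ is constant along the path and $\lambda_1(t)+\lambda_2(t)\geq\beta c_H$ throughout. (Note that the hypotheses force $\underline\lambda\geq\tfrac12\beta c_H$: from $\lambda_1\leq\lambda_2$ and $\lambda_1+\lambda_2\geq\beta c_H$ one has $\lambda_i\geq\lambda_2\geq\tfrac12\beta c_H$ for every $i\geq 2$, whence $n\underline\lambda\geq\beta c_H+(n-2)\tfrac12\beta c_H$.) Fix a smooth $T:\R\to[0,1]$ with $T\equiv 1$ on $(-\infty,c]$, $T\equiv 0$ on $[1,\infty)$, and $\|T'\|_\infty+\|T''\|_\infty$ as small as desired (produced by the logarithmic slicing of Proposition \ref{trasnsition_function_b}), and define
\[
u_{\delta,\lambda}(x)=\underline\lambda^{-1}\!\left(1-\sqrt{1-\underline\lambda\sum_{i=1}^{n}\lambda_i\!\bigl(T(|x|^2/\delta)\bigr)\,x_i^2}\right).
\]
Then $u_{\delta,\lambda}\equiv u^\lambda$ on $|x|^2\in[\delta,2\delta]$, $u_{\delta,\lambda}\equiv u^{(\underline\lambda,\ldots,\underline\lambda)}$ on $|x|^2\leq c\delta$, and the resulting $(n+1)$-parameter family is smooth in $(\lambda,\delta)$.

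The main task is to verify 2-convexity of $\graph(u_{\delta,\lambda})$ on the transition annulus $c\delta\leq|x|^2\leq\delta$. I would argue by rescaling in the style of Proposition \ref{mean_convex_capped_off}: setting $\tilde x=x/\sqrt\delta$ and $\tilde u(\tilde x)=u_{\delta,\lambda}(\sqrt\delta\,\tilde x)/\delta$, a Taylor expansion of $g(s)=\underline\lambda^{-1}(1-\sqrt{1-\underline\lambda s})$ gives $\tilde u(\tilde x)=\tfrac12\sum_i\lambda_i(T(|\tilde x|^2))\tilde x_i^2+O(\delta)$, with matching $O(\delta)$-bounds on derivatives. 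Since 2-convexity is scale-invariant and principal curvatures depend continuously on the $2$-jet of the graph, it suffices to check that the $\delta=0$ quadratic graph is strictly 2-convex on the compact annulus $|\tilde x|^2\in[c,1]$. A direct computation shows that its Hessian at $\tilde x$ equals $\mathrm{diag}(\lambda_i(T(|\tilde x|^2)))+A(\tilde x)\,I+R(\tilde x)$, where $A(\tilde x)=T'(|\tilde x|^2)\sum_i(\underline\lambda-\lambda_i)\tilde x_i^2$ and $R$ is a rank $\leq 3$ correction, both with operator norm bounded by $C(\mathbb A)(\|T'\|_\infty+\|T''\|_\infty)$. Since the diagonal part is uniformly 2-convex along $\lambda(\cdot)$, choosing $\|T'\|_\infty+\|T''\|_\infty$ small enough yields $\lambda_1+\lambda_2\geq\tfrac12\beta c_H$ for the rescaled graph, and hence for the original graph once $\delta<\bar\delta(\mathbb A)$; a compactness/contradiction argument closes the remaining gap.

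Finally, I would complete $C_\delta^\lambda$ to a compact contractible 2-convex domain by extending $u^\lambda$ outside $|x|^2=2\delta$ so that it closes up smoothly; since $\graph(u^\lambda)$ is exactly the lower half of the ellipsoid $\{\sum\underline\lambda\lambda_ix_i^2+(\underline\lambda y-1)^2=1\}$, simply continuing this ellipsoidal profile works. The isotropic case $\lambda_1=\cdots=\lambda_n$ is automatic because $u^\lambda\equiv u^{(\underline\lambda,\ldots,\underline\lambda)}$ identically and no interpolation is required. The main obstacle is the Hessian control on the transition annulus, where the rank $\leq 3$ correction threatens to spoil 2-convexity; slowing $T$ down via Proposition \ref{trasnsition_function_b} is precisely what tames this correction.
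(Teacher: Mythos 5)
Your construction is essentially identical to the paper's: the same ansatz of interpolating the eigenvalue vector inside the ellipsoidal profile via a slow radial transition function from Proposition \ref{trasnsition_function_b}, with $2$-convexity preserved because the second fundamental form is the diagonal matrix $\mathrm{diag}(\lambda_i(T))$ plus an error controlled by the transition function, and the sum of the two smallest eigenvalues is uniformly continuous. One imprecision: $\|T'\|_\infty+\|T''\|_\infty$ cannot literally be made small for a function dropping from $1$ to $0$ on a subinterval of $[0,1]$; what Proposition \ref{trasnsition_function_b} actually gives (and what the error terms $A(\tilde x)$, $R(\tilde x)$ actually carry, because of the $\tilde x_i^2$ factors) is smallness of the scale-invariant quantities $|sT'(s)|+|s^2T''(s)|$, at the price of the constant $c$ being very small --- with that bookkeeping your argument closes exactly as in the paper.
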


Before giving a proof, let us summarize some generalities about computing the curvature of graphs. Let $u:B^n_{2\delta}(0)\rightarrow \mathbb{R}$ be a smooth function. In the parametrization $(x_1,\ldots,x_n)\mapsto (x_1,\ldots,x_n,u(x_1,\ldots,x_n))$ for $\graph(u)$, the second fundamental form is given by   
\begin{equation}\label{sc_fun_form_graph}
h^{i}_j=\frac{1}{\sqrt{1+|Du|^2}}\left(1-\frac{1}{1+|D u|^2}\frac{\D u}{\D x_i}\frac{\D u}{\D x_j}\right)\frac{\D^2u}{\D x_i\D x_j}.
\end{equation}

Let $\textrm{Sym}_n$ denote the space of all real symmetric $n\times n$ matrices and let $S:\textrm{Sym}_n\rightarrow \mathbb{R}$ be the function which assigns to each symmetric matrix the sum of its smallest two eigenvalues. Since $S$ is uniformly continuous, for every $\kappa>0$ there exist some $\omega(\kappa)>0$ such that if $S(A)\geq\kappa$ then $S(A+E)\geq\kappa/2$ for every $E\in \mathrm{Sym}_n$ with $|E^i_j|<\omega$.

\begin{proof}[{Proof of Proposition \ref{mixed_curvature_deform}}] We make the ansatz
\begin{equation}
u(x)=\underline{\lambda}^{-1}\left(1-\sqrt{1-\underline{\lambda}\sum_{i=1}^{n}\left((1-\eta(r))\lambda_i+\eta(r)\underline{\lambda}\right)x_i^2}\right),
\end{equation}
where $r(x)=\sqrt{\sum_{i=1}^{n}x_i^2}$ and $\eta:\mathbb{R}\to [0,1]$ is a smooth function to be specified below.

By \eqref{sc_fun_form_graph}, we have that for $r<\delta<\bar{\delta}(\mathbb{A})$,
\begin{equation}
h^{i}_j=((1-\eta(r))\lambda_i+\eta(r)\underline{\lambda})\delta^{i}_j+E^i_j,
\end{equation}

where the error term can be estimated by
\begin{equation}
|E^{i}_j| \leq C_1(C_A)\left(r+|r\eta'(r)|+|r^2\eta''(r)|\right).
\end{equation} 

Let $\psi$ be the function from Proposition \ref{trasnsition_function_b} with $\eps=\omega(\beta c_H)/2C_1(C_A)$. Letting $\eta(r)=\psi(r/\delta)$, we see from the above that $S(h^{i}_j)>0$, possibly after reducing $\bar{\delta}$.

We can thus transition between the functions $u^{\lambda}$ and $u^{(\underline{\lambda},\ldots,\underline{\lambda})}$, as $r$ decreases from $\delta$ to $c\delta>0$, where $c=c(\mathbb{A})>0$ is chosen such that $\psi|_{[0,c]}=1$. This implies the assertion. 
\end{proof}

\begin{proposition}[Second order approximation]\label{approx_parab}
There exist a constant $\bar{\delta}=\bar{\delta}(\mathbb{A})>0$ and a smooth, rigid motion equivariant map
\begin{equation*}
\mathcal{D}_{\mathbb{A}}^{\textrm{pt}}\times (0,\bar{\delta})\rightarrow \mathcal{D},\quad ((K,p),\delta)\mapsto \mathcal{P}_\delta(K,p),
\end{equation*}
 such that setting $K'=\mathcal{P}_\delta(K,p)$ it holds that:
\begin{enumerate}[\hspace{3mm}(1)]
\item $K\cap B_{\delta}(p)$ and $K'\cap B_{\delta}(p)$ are diffeomorphic to a half-ball.
\item $K\setminus B_{\delta}(p)=K' \setminus B_{\delta}(p)$.
\item If $\lambda_1\leq \ldots  \leq \lambda_n$ are the principal curvatures of $K$ at $p$ then setting $\lambda=(\lambda_1,\ldots,\lambda_n)$, in some orthonormal basis, 
\begin{equation*}
(\partial K'-p)\cap B_{\delta/2}(p)=\graph\left(u^{\lambda}\right)\cap B_{\delta/2}(0).   
\end{equation*}
\end{enumerate}
Moreover, if $K\cap B_{\delta}(p)$ is round, then $K'=K$.
\end{proposition}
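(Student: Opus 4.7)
The plan is to express $\partial K$ locally as a graph over its tangent plane at $p$ and interpolate to the model function $u^\lambda$ via a radial cutoff, exploiting that $u$ and $u^\lambda$ agree to second order at $p$.

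First, I would fix orthonormal coordinates $(x_1,\ldots,x_n,x_{n+1})$ centered at $p$ with $T_p\partial K=\{x_{n+1}=0\}$ and $e_1,\ldots,e_n$ diagonalizing the shape operator $A_p$ with eigenvalues $\lambda_1\leq\ldots\leq\lambda_n$. By the bounds $\lambda_n\leq C_A$ and $|\nabla A|\leq D_A$, there exists $\bar\delta=\bar\delta(\mathbb{A})>0$ such that for every $\delta<\bar\delta$ the hypersurface $\partial K\cap B_{2\delta}(p)$ is the graph of a smooth function $u:B_{2\delta}(0)\subset\R^n\to\R$ with $u(0)=0$, $Du(0)=0$, $D^2u(0)=\mathrm{diag}(\lambda_1,\ldots,\lambda_n)$, and $\|u\|_{C^3}\leq C(\mathbb{A})$; by construction $u^\lambda$ has the same 2-jet at the origin. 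Next I would choose a smooth radial cutoff $\phi:[0,\infty)\to[0,1]$ with $\phi\equiv 1$ on $[0,\delta/2]$, $\phi\equiv 0$ on $[3\delta/4,\infty)$, $|\phi'|\lesssim \delta^{-1}$, and $|\phi''|\lesssim\delta^{-2}$, set
\[
v(x):=\phi(|x|)\,u^\lambda(x)+(1-\phi(|x|))\,u(x),
\]
and define $\mathcal{P}_\delta(K,p)$ by replacing $\partial K\cap B_\delta(p)$ with $p+\graph(v)\cap B_\delta(0)$ while leaving the rest of $K$ untouched. Properties (1)--(3) then hold tautologically, and if $K\cap B_\delta(p)$ is round all principal curvatures coincide so that $u\equiv u^\lambda$, giving $v=u$ and $\mathcal{P}_\delta(K,p)=K$. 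Smoothness and rigid motion equivariance follow because $A_p$ varies smoothly and equivariantly in $(K,p)$, the coordinate-free expression $u^\lambda(x)=\underline{h}^{-1}\bigl(1-\sqrt{1-\underline{h}\,\langle A_p x,x\rangle}\bigr)$ on $T_p\partial K$ is insensitive to basis ambiguities inside higher-dimensional eigenspaces of $A_p$, and $\phi$ depends only on $|x|$.

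The main obstacle is verifying that $\graph(v)$ remains $2$-convex. Since $u$ and $u^\lambda$ share the same 2-jet at $0$ and both enjoy uniform $C^3$ bounds on $B_{2\delta}(0)$ controlled by $\mathbb{A}$, a Taylor argument yields
\[
|u-u^\lambda|\lesssim|x|^3,\qquad |D(u-u^\lambda)|\lesssim|x|^2,\qquad |D^2(u-u^\lambda)|\lesssim|x|
\]
on $B_{2\delta}(0)$ with constants depending only on $\mathbb{A}$. In the transition annulus $\{\delta/2\leq|x|\leq 3\delta/4\}$ the product-rule bounds on $\phi$ then give $\|v-u\|_{C^2}\lesssim\delta$, so that by \eqref{sc_fun_form_graph} the second fundamental form of $\graph(v)$ perturbs that of $\graph(u)$ by $O(\delta)$. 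Since $\partial K$ is $\beta$-uniformly $2$-convex with $H\geq c_H$, and hence $\lambda_1+\lambda_2\geq\beta c_H$, this perturbation preserves $2$-convexity for $\delta$ sufficiently small in terms of $\mathbb{A}$. On $B_{\delta/2}(0)$ where $v=u^\lambda$, the second fundamental form of $\graph(u^\lambda)$ equals $\mathrm{diag}(\lambda)$ at the origin and, by \eqref{sc_fun_form_graph}, is $O(\delta)$-close to it throughout $B_\delta(0)$, hence again $2$-convex after possibly further shrinking $\bar\delta=\bar\delta(\mathbb{A})$.
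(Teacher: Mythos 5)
Your argument is essentially the paper's proof: write $\partial K$ locally as a graph $u$ over the tangent plane, use that $u$ and $u^{\lambda}$ share the same $2$-jet at $p$ so that $|D^{(i)}(u-u^{\lambda})|\lesssim \delta^{3-i}$ for $i=0,1,2$, and interpolate with a radial cutoff (your $v=\phi u^{\lambda}+(1-\phi)u$ is the paper's $v=u^{\lambda}+\eta E$ with $\eta=1-\phi$), so that by \eqref{sc_fun_form_graph} the second fundamental form is perturbed by $O(\delta)$ and uniform two-convexity survives for $\delta\leq\bar{\delta}(\mathbb{A})$. The only point worth adding is that obtaining the graph representation (and hence property (1)) on a ball of radius depending only on $\mathbb{A}$ uses the $\alpha$-noncollapsing as well as the curvature bounds, since one must exclude other sheets of $\partial K$ entering $B_{2\delta}(p)$.
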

\begin{proof}
The $\alpha$-noncollapsing condition, together with the bounds for $\abs{A}$ and $\abs{\nabla A}$ from \eqref{eq_bound_contr_dom} implies that, after rotation, in a neighborhood of the origin of radius $\bar{\delta}=\bar{\delta}(\mathbb{A})>0$, one can write 
\begin{equation}
\partial K-p=\graph(u(x_1,\ldots,x_n)),
\end{equation}
such that the error term $E:=u-u^{\lambda}$ satisfies
\begin{equation}\label{eq_quad_err_est}
\qquad\qquad\qquad |D^{(i)}E| \leq C\delta^{3-i}\qquad\qquad (i=0,1,2)
\end{equation} for $r(x)=\sqrt{\sum_{i=1}^{n}x_i^2}<\delta<\bar{\delta}(\mathbb{A})$, where $C=C(\mathbb{A})<\infty$. 

Moreover, possibly after decreasing $\bar{\delta}$, computing $h^i_j$ of $\graph(u^\lambda)$ as in \eqref{sc_fun_form_graph} we see that  
\begin{equation}\label{definitely_positive_graph}
S(h^{i}_j)\geq \frac{\beta c_H}{2}
\end{equation}
in that neighborhood.

Given $\delta<\bar{\delta}$, we let $\eta(r)=\chi_{\delta/2}(r-\delta/2)$ and make the ansatz
\begin{equation}
v=u^{\lambda}+\eta E\, .
\end{equation}
Since $|D^{(i)}\eta | \leq C \delta^{-i}$ for some $C<\infty$, together with \eqref{eq_quad_err_est} we get
\begin{equation}\label{small interp_remainder}
\qquad\qquad\qquad |D^{(i)}(\eta E)| \leq C\delta^{3-i} \qquad\qquad (i=0,1,2)\, .
\end{equation}  
Denoting by $\bar{h}^i_j$ the second fundamental form of $\graph(v)$, it follows from \eqref{small interp_remainder} that, possibly after decreasing $\bar{\delta}$ further, $|h^{i}_j-\bar{h}^i_j|<\omega(\beta c_H/2)$. Thus $S(\bar{h}^{i}_j)>\beta c_H/4$ and the assertion follows.
\end{proof}

\begin{proposition}[Bending curves]\label{bending_tubes}
There exists a constant $\bar{\delta}=\bar{\delta}(b)>0$ and a smooth, rigid motion equivariant map
\begin{equation}
\mathcal{C}^{\textrm{pt}}_{b}\times (0,\bar{\delta}) \rightarrow \mathcal{C}^{\textrm{pt}}_{10^{-6}b},\quad ((\gamma,p),\delta)\mapsto \mathcal{B}_\delta (\gamma,p),
\end{equation}
such that setting $\mu=\mathcal{B}_\delta(\gamma,p)$ we have:
\begin{enumerate}[\hspace{3mm}(1)]
\item $\mu\setminus B_{\delta}(p)=\gamma\setminus B_{\delta}(p)$,
\item $\mu\cap B_{\delta/2}(p)$ is a straight ray starting at $p$ with $\partial_s\mu(p)=\partial_s\gamma(p)$.
\item $d_H(\mu,\gamma)<10^{-2}\delta$, where $d_H$ denotes the Hausdorff distance.
\end{enumerate} 
Moreover, if $\gamma\cap B_{\delta}(p)$ is straight, then $\mu=\gamma$.
\end{proposition}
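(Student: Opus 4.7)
The strategy is to parametrize $\gamma$ by arclength so that $\gamma(0)=p$, set $T:=\gamma'(0)$ and $\ell(s):=p+sT$, and form $\mu$ by interpolating between $\ell$ and $\gamma$ through a smooth cutoff supported in a subinterval of $[\delta/2,\delta]$. With $e(s):=\gamma(s)-\ell(s)$, the $b$-control hypothesis together with $e(0)=e'(0)=0$ gives
\[
|e(s)|\leq \tfrac{s^2}{2b},\quad |e'(s)|\leq \tfrac{s}{b},\quad |e''(s)|\leq \tfrac{1}{b},\quad |e'''(s)|\leq \tfrac{1}{b^2}.
\]
Fix a smooth cutoff $\eta_\delta:\R\to[0,1]$ with $\eta_\delta\equiv 0$ on $(-\infty,0.6\delta]$, $\eta_\delta\equiv 1$ on $[0.9\delta,\infty)$, and $|\eta_\delta^{(k)}|\lesssim \delta^{-k}$, and define
\[
\mathcal{B}_\delta(\gamma,p)(s):=\mu(s):=\ell(s)+\eta_\delta(s)\,e(s).
\]
Smoothness in the parameters $(\gamma,p,\delta)$ and rigid-motion equivariance are manifest from the formula, since $T$, $\ell$ and $e$ all transform naturally; and if $\gamma\cap B_\delta(p)$ is already straight, then $e\equiv 0$ on $[0,\delta]$ and $\mu=\gamma$.

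Properties (1)--(3) then follow by direct inspection. For $s\leq 0.6\delta$, $\mu(s)=\ell(s)$ lies on the straight ray from $p$ with initial tangent $T=\partial_s\gamma(p)$; the bound $|e(s)|\leq s^2/(2b)\ll \delta$ (valid once $\delta\ll b$) forces $\mu(s)\in B_{\delta/2}(p)$ to occur exactly when $s\leq\delta/2$, so $\mu\cap B_{\delta/2}(p)$ is precisely this ray. For $s\geq 0.9\delta$, $\mu=\gamma$, while for $s\leq 0.9\delta$ both $\mu(s)$ and $\gamma(s)$ lie inside $B_\delta(p)$, giving $\mu\setminus B_\delta(p)=\gamma\setminus B_\delta(p)$. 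The Hausdorff bound is immediate: $|\mu(s)-\gamma(s)|=(1-\eta_\delta(s))|e(s)|\leq \delta^2/(2b)<10^{-2}\delta$ whenever $\delta<b/50$.

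The main technical step is to check that $\mu\in\mathcal{C}^{\textrm{pt}}_{10^{-6}b}$. A term-by-term application of the Leibniz rule to $\mu=\ell+\eta_\delta e$, using the basic estimates above, yields
\[
|\mu'-T|\lesssim \delta/b,\qquad |\mu''|\lesssim 1/b,\qquad |\mu'''|\lesssim 1/(\delta b)+1/b^2.
\]
Since $|\mu'|\in[1-c\delta/b,1+c\delta/b]$, reparametrisation by arclength is legitimate and converts these bounds into $|\kappa_\mu|\lesssim 1/b$ and $|\partial_s\kappa_\mu|\lesssim 1/(\delta b)+1/b^2$. Once $\bar\delta(b)$ is chosen sufficiently small compared with $b$, these are bounded by $10^6/b$ and $10^{12}/b^2$ respectively; the large factor $10^{12}$ on the right, which arises precisely from the $10^{-6}$ in the target space $\mathcal{C}^{\textrm{pt}}_{10^{-6}b}$, is what provides the room to absorb the $(\delta b)^{-1}$ contribution. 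The normal-injectivity-radius and component-separation conditions follow from the $C^0$-closeness of $\mu$ to $\gamma$ together with the fact that $\mu$ is a genuinely straight ray in an entire ball around $p$. The delicate point, and the one that forces the smallness of $\bar\delta(b)$, is the $|\partial_s\kappa_\mu|$ bound: the cutoff derivatives $|\eta_\delta^{(k)}|$ scale like $\delta^{-k}$ while the $|e^{(k)}|$ are only as small as $s^{2-k}/b$, so each Leibniz term must be balanced carefully at the order at which it arises.
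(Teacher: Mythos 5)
Your construction is the same as the paper's: parametrize $\gamma$ by arclength from $p$, let $\ell$ be the tangent ray at $p$, and set $\mu=(1-\eta)\ell+\eta\gamma$ with a cutoff supported in an annulus at scale $\delta$. The paper uses $\eta(t)=\chi_{\delta/2}(t-\delta/2)$, the Taylor estimates $|(\gamma-\ell)^{(j)}(t)|\leq 10\,b^{-1}t^{2-j}$ for $j=0,1,2$, concludes $|\mu'-v|\leq 10^{3}\delta b^{-1}$ and $|\mu''|\leq 10^{3}b^{-1}$, hence curvature at most $10^{6}b^{-1}$, and then reads off (1)--(3), equivariance, non-re-entry into $B_\delta(p)$, and the ``moreover'' clause exactly as you do. Up to and including the curvature bound, your argument matches the paper's.

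The one place where you go beyond the paper is the verification of the bound $|\partial_s\kappa_\mu|\leq(10^{-6}b)^{-2}$, and there your conclusion is backwards. You correctly find $|\mu'''|\lesssim(\delta b)^{-1}+b^{-2}$ (the Leibniz terms $\eta'''e$, $\eta''e'$, $\eta'e''$ are each of size $\delta^{-1}b^{-1}$ on the transition annulus, since $|e''|$ is only $O(b^{-1})$ there, not $O(\delta/b)$). But the inequality $C(\delta b)^{-1}\leq 10^{12}b^{-2}$ is equivalent to $\delta\geq C\,10^{-12}b$, i.e.\ a \emph{lower} bound on $\delta$; shrinking $\bar\delta(b)$ makes the term $(\delta b)^{-1}$ larger, so it is not absorbed by the factor $10^{12}$ for $\delta$ near $0$. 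Nor can a cleverer interpolation help: any curve that is straight on $B_{\delta/2}(p)$ and agrees with $\gamma$ outside $B_\delta(p)$ has $\sup|\partial_s\kappa|\gtrsim|\kappa_\gamma(\delta)|/\delta$ by the mean value theorem, which exceeds $10^{12}b^{-2}$ once $\delta\ll 10^{-12}b$ and $|\kappa_\gamma|\sim b^{-1}$ near $p$. The paper's own proof is silent on this point --- it checks only the curvature bound and the injectivity/separation conditions --- so your construction and your verification of (1)--(3) stand, but the sentence claiming that small $\bar\delta$ yields the $|\partial_s\kappa_\mu|$ bound should be removed or replaced: either restrict to $\delta$ comparable to $b$, or record that only the zeroth-order curvature control (which is what the subsequent applications, e.g.\ Proposition \ref{mean_convex_capped_off}, actually exploit quantitatively) is being asserted.
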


\begin{proof}
Let $\ell(t):=p+tv$, where $v=\partial_s\gamma(p)$.
We would like to interpolate between $\ell$ and $\gamma$.

Let $\eta(t)=\chi_{\delta/2}(t-\delta/2)$ where $\chi_{\delta/2}$ is the standard cutoff function from \eqref{eq_stepfn}. Parametrize $\gamma$ by arclength and let
\begin{equation}
\mu =\eta \gamma +(1-\eta)\ell \, .
\end{equation}
Since $\abs{(\ell(t)-\gamma(t))^{(j)}}\leq 10 b^{-1}t^{2-j}$ ($j=0,1,2$) for $t\in [0,\delta]$ and $\delta<\bar{\delta}=\bar{\delta}(b)$ small enough, we get the estimates $\abs{\mu'(t)-v} \leq 10^{3}\delta b^{-1}$ and $\abs{\mu''(t)}\leq 10^{3}b^{-1}$. In particular, $\mu$ has curvature bounded by $10^{6}b^{-1}$. Decreasing $\bar{\delta}=\bar{\delta}(b)$ even more if needed we can ensure that (3) holds and that the curves do not reenter $B_\delta(p)$, by $b$-controlledness. This proves the assertion.
\end{proof}

\begin{proof}[Proof of Theorem \ref{thm_attachstrings}]
Set $\delta(r):=\frac{2\sqrt{2}n}{c c_H}\sqrt{\varrho^{-1}(C_A r)}$, where $c=c(\mathbb{A})$ is the constant from Proposition \ref{mixed_curvature_deform},
$c_H$ and $C_A$ are the curvature bound from \eqref{eq_bound_contr_dom}, and $\varrho$ is the function from Proposition \ref{gluing_sphere_cylinder}. 

Choose $\bar{r}(\mathbb{A},b)>0$ smaller than $10^{-6}\zeta(\alpha,\beta)b$, where $\zeta(\alpha,\beta)$ is from Proposition \ref{mean_convex_capped_off}, small enough such that $\varrho^{-1}(C_A \bar{r})<1$ and $\delta(\bar{r})<b/100$, and small enough such that we can apply Propositions
\ref{bending_tubes},
\ref{approx_parab},
\ref{mixed_curvature_deform}, and
\ref{gluing_sphere_cylinder} for $\delta(r)$ with $r\in (0,\bar{r})$.

Given $(D,\gamma)\in \mathcal{X}_{\mathbb{A},b}$ and $r\in (0,\bar{r})$, we first bend the ends of $\gamma$ that don't hit $\partial D$ straight at scale $\delta(r)$ using Proposition \ref{bending_tubes}. Next, by Proposition \ref{approx_parab}, for every $p\in \partial \gamma\cap \partial D$ we can deform the domain inside $B_{\delta(r)}(p)-B_{\delta(r)/2}(p)$ to its second order approximation in $B_{\delta(r)/2}(p)$. Next, by Proposition \ref{mixed_curvature_deform} we can transition to $\graph\left(u^{(\underline{\lambda},\ldots,\underline{\lambda})}\right)\cap B_{c\delta(r)/2}$. Finally, we can apply Proposition \ref{gluing_sphere_cylinder} with
$\delta=\varrho^{-1}(\underline{\lambda}r)$ and rescale (note that $\frac{\sqrt{2\delta}}{\underline{\lambda}}\leq \frac{c\delta(r)}{2}$ by our choice of constants) to glue this to the $r$-tubular neighborhood of the straight end of the curve. 
By Proposition \ref{mean_convex_capped_off} the $r$-tubular neighborhoods and the capped off tubes are indeed 2-convex, and this finishes the construction of $\mathcal{G}_r(D,\gamma)\in\mathcal{D}$.

The fact that the above construction indeed provides a smooth rigid motion invariant map as stated is clear since the choices that were made were done via specific functions.  The fact that $\delta(r)$ is increasing and $\lim_{r\rightarrow 0}\delta(r)=0$ follows from the corresponding fact about the function $\varrho$ of Proposition \ref{gluing_sphere_cylinder}. Conditions (1)--(4) are clear from the construction as well. 

Finally, the more precise description in the case that $D\cap B_{\delta(r)}(p)$ is round, follows from the `moreover'-parts of Propositions \ref{bending_tubes}, \ref{approx_parab}, and \ref{mixed_curvature_deform}.
\end{proof}

\section{Marble trees}\label{sec_marble}

The goal of this section is to show that every marble tree can be deformed via 2-convex domains to a round ball.

\begin{definition}[Marble tree]\label{def_marbletree}
A \emph{marble tree} with string radius $r_s$ and marble radius $r_m$ is a domain of the form $T:=\mathcal{G}_{r_s}(D,\gamma)$ such that
\begin{enumerate}
\item $D=\bigcup_i \bar{B}_{r_m}(p_i)$ is a union of finitely many balls of radius $r_m$.
\item The curve $\gamma$ is such that:
\begin{itemize}
\item there are no loose ends, i.e. $\partial\gamma\setminus\partial D=\emptyset$,
\item $\gamma\cap \bar{B}_{3r_m}(p_i)$ is a union of straight rays for all $i$,
\item and $D\cup \gamma$ is simply connected.
\end{itemize}
\end{enumerate}
\end{definition}

\begin{theorem}[Marble tree isotopy]\label{thm_marble_tree}
Every marble tree is isotopic through 2-convex domains to a round ball.
\end{theorem}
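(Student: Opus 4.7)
My plan is induction on the number $N$ of marbles. The base case $N = 1$ is immediate: the no-loose-ends condition in Definition \ref{def_marbletree} forces $\gamma = \emptyset$ and simple connectivity of $D \cup \gamma$ rules out loops, so the domain is already a round ball. For $N \geq 2$, the same combinatorial hypotheses make the dual graph of $D \cup \gamma$ (vertices $=$ marbles, edges $=$ strings) a tree on $N$ vertices, hence it has a leaf marble $B_0$ joined by exactly one string $\sigma$ to a neighbor marble $B_1$. I would construct an isotopy through 2-convex domains that absorbs $B_0 \cup \sigma$ into an enlarged ball replacing $B_1$, leaving everything else fixed; the result is a marble-tree-type configuration with $N-1$ marbles (the merged ball possibly having a different radius), to which the induction applies. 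It is harmless to enlarge the class under consideration to allow distinct marble radii, since the goal is only to produce an isotopy to \emph{some} round ball.

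For the merger, parts (3) and (4) of Theorem \ref{thm_attachstrings} (locality and rigid-motion equivariance of $\mathcal{G}_{r_s}$) reduce the construction to a rotationally symmetric neighborhood of $\sigma$, where the boundary is the surface of revolution of a profile $u(x) > 0$ and 2-convexity is the scalar condition $P[u] = 1 + u'^2 - u u'' > 0$ from Section \ref{sec_gluing}. I would split the isotopy into two phases. First, translate $B_0$ along the axis of $\sigma$ toward $B_1$, steadily shortening $\sigma$; as long as the configuration stays in $\mathcal{X}_{\mathbb{A},b}$, smoothness of $\mathcal{G}_{r_s}$ in Theorem \ref{thm_attachstrings} provides a 2-convex isotopy for free. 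Second, once $\sigma$ has reached some definite minimal length, explicitly construct a path of rotationally symmetric profile functions $u_t$ interpolating between the ``two balls joined by a short string'' profile and a single larger spherical cap smoothly attached to the unchanged portion of $B_1$; this is a variant of the ball-to-cylinder construction of Proposition \ref{gluing_sphere_cylinder}, with a spherical cap replacing the infinite cylinder at one end.

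The main obstacle is ensuring $P[u_t] > 0$ throughout the second phase. The danger spots are the two ``elbows'' where $\sigma$ meets the spherical portion of each ball, since there $u''$ is of order $r_s^{-1}$ and a naive linear interpolation as in Proposition \ref{linear_interpolation} would fail the sign condition. I would address this exactly in the spirit of Sections \ref{sec_transition}--\ref{sec_gluing}: describe $u_t$ piecewise as a concatenation of explicit convex building blocks (spherical arcs, a short cylindrical piece of varying radius, and standard-cap-type transitions), glued at scales much smaller than $r_s$ using the transition functions $\varphi$ (Proposition \ref{good_transition}) and $\psi$ (Proposition \ref{trasnsition_function_b}). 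The gluing principle (Remark \ref{gluing_principle}) then permits second-order adjustments at these tiny scales without disturbing the zeroth- and first-order data, so the resulting concatenated $u_t$ satisfies $P[u_t] > 0$, completing the merger and closing the induction.
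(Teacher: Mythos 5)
Your overall strategy --- induction on the number of marbles, extraction of a leaf from the tree structure, and reduction of the leaf-removal step to a rotationally symmetric profile computation with $P[u]=1+u'^2-uu''>0$ handled by the transition machinery of Sections \ref{sec_transition}--\ref{sec_gluing} --- is the same as the paper's. The induction step in the paper (Proposition \ref{leaf_contract}) differs from yours in mechanics rather than in spirit: instead of sliding $B_0$ toward $B_1$ and merging two balls joined by a short thin string, it first thickens the string to a definite radius $\varrho(0.99)$ using the explicit family $C_\delta$ of Proposition \ref{gluing_sphere_cylinder} (this family, run in $\delta$, \emph{is} the unpinching isotopy your Phase 2 needs but only gestures at), then deforms the leaf ball into a standard cap on the end of the thick tube using \eqref{eq_sec_der_less_one} and Proposition \ref{linear_interpolation}, retracts the resulting capped-off tube along the string, and finally smooths the remaining stub on $B_1$ back to the original round ball. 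That last point matters: the paper restores $B_1$ exactly, so the $(N-1)$-marble configuration is literally the original one with the leaf and its string deleted, and no enlargement or relabeling of radii is needed.

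There is one genuine gap in your induction step: you never deal with the other strings attached to the neighbor marble $B_1$. Your merger takes place ``in a rotationally symmetric neighborhood of $\sigma$'', but nothing in Definition \ref{def_marbletree} prevents another string of the tree from attaching to $\partial B_1$ arbitrarily close to the attachment point of $\sigma$; in that case the domain is not rotationally symmetric where you need it to be and your profile construction is not defined. Worse, your proposed endpoint --- ``an enlarged ball replacing $B_1$'' --- would swallow or displace the attachment points of all other strings on $B_1$, destroying the orthogonality condition of Definition \ref{conrolled_conf} and the straight-ray condition of Definition \ref{def_marbletree}(2), so the output of your step is not a marble tree and the induction does not close. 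The paper resolves exactly this with the Rearrangement Lemma \ref{tentacle_rearange_lem}: shrink the marbles and slide every other string on $B_1$ into the hemisphere opposite $\sigma$, staying inside $\mathcal{X}$ so that applying $\mathcal{G}_{r_s}$ to the path of configurations yields an isotopy; afterwards the leaf-removal isotopy can be made trivial on the half-space containing the other strings and can terminate at the unchanged ball. With that rearrangement step added, and with the family $\delta\mapsto C_\delta$ made explicit as the mechanism for thickening the thin neck before invoking Proposition \ref{linear_interpolation}, your argument goes through.
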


We will prove Theorem \ref{thm_marble_tree} by induction on the number of marbles. If there is just one marble, then our marble tree is just a round ball and there is no need to deform it further.

Assume now $T:=\mathcal{G}_{r_s}(D,\gamma)$ is a marble tree with at least two marbles. The induction step essentially amounts  to selecting a leaf, i.e. a ball with just one string attached, deforming this leaf to a standard cap, and contracting the tentacle. To actually implement the induction step, let us start with some basic observations and reductions:

By the `moreover'-part of Theorem \ref{thm_attachstrings} the gluing is described by the explicit round model case from Proposition \ref{gluing_sphere_cylinder}. In particular, $T:=\mathcal{G}_{r_s}(D,\gamma)$ is independent of the precise values of the control parameters $(\mathbb{A},b)$, and we can thus adjust the control parameters freely as needed in the following argument (e.g. we can relax the control parameters so that we can shrink the marbles). We will suppress the control parameters in the following, and we will simply speak about controlled configurations when we mean configurations that are controlled for some parameters $(\mathbb{A},b)$.

Let $\mathcal{X}$ be the space of controlled configurations $(D,\gamma)$ that satisfy properties (1) and (2) from Definition \ref{def_marbletree} for some $r_m$.

\begin{lemma}[Rearrangements]\label{tentacle_rearange_lem}
Let $(\bigcup_i \bar{B}_{r_m}(p_i),\bigcup_j \gamma_j)\in \mathcal{X}$, and suppose (after relabeling) that $\gamma_1$ meets $\partial B_{r_m}(p_1)$. Then there exists a smooth path in $\mathcal{X}$ connecting it to a configuration $(\bigcup_i \bar{B}_{\tilde{r}_m}(p_i),\bigcup_j \tilde{\gamma}_j)\in \mathcal{X}$ such that $\tilde{\gamma}_1$ is the only curve meeting $B_{\tilde{r}_m}(p_1)$ in its hemisphere. Moreover, the path can be choosen to be trivial outside $\bigcup_i \bar{B}_{r_m}(p_i)$.
\end{lemma}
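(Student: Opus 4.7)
\emph{Strategy.} The plan is a two-stage isotopy: first shrink every ball uniformly to create ample annular room inside $\bar B_{r_m}(p_1)$, then deform, inside that annular room, the near-$p_1$ ends of the curves from their original attachment directions $v_i\in S^n$ to target directions $\tilde v_i$ with $\tilde v_1$ in one hemisphere of $S^n$ and $\tilde v_2,\ldots,\tilde v_k$ in the opposite one. Outside $\bar B_{r_m}(p_1)$ nothing will move, so triviality of the path outside $\bigcup_i\bar B_{r_m}(p_i)$ is automatic. Two background facts make this work: the discussion following the statement of Theorem~\ref{thm_marble_tree} lets us relax the control constants $(\mathbb{A},b)$ freely, so bounded-curvature bending is never an obstacle; and the ambient dimension $n+1\geq 3$ makes curves have codimension $\geq 2$, so finitely many $1$-dimensional arcs can be rearranged disjointly.

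\emph{Construction.} First, shrink every ball from $r_m$ to some $\hat r_m\leq r_m/100$ while extending the attached curves inward radially. Since each curve was already straight and radial in $\bar B_{3r_m}(p_i)$, the extended curve is straight in $\bar B_{3\hat r_m}(p_i)$, and this is an obvious smooth path in $\mathcal{X}$ (with relaxed parameters), trivial outside $\bigcup_i\bar B_{r_m}(p_i)$. Let $\gamma_1=\gamma_{j_1},\gamma_{j_2},\ldots,\gamma_{j_k}$ be the curves that hit $\partial B_{r_m}(p_1)$, with attachment directions $v_1,\ldots,v_k\in S^n$ relative to $p_1$. Fix a unit vector $e\in S^n$, set $\tilde v_1=-e$, and choose pairwise distinct targets $\tilde v_i\in\{v\cdot e>1/2\}$ for $i\geq 2$. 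Since $n\geq 2$, the ordered configuration space $F_k(S^n)$ of $k$ distinct points on $S^n$ is path-connected, so fix a smooth path $t\mapsto (v_1(t),\ldots,v_k(t))$ in $F_k(S^n)$ with $v_i(0)=v_i$ and $v_i(1)=\tilde v_i$. For each $t\in[0,1]$ and each $i$, replace $\gamma_{j_i}\cap \bar B_{r_m}(p_1)$ by the concatenation of (a) the straight radial segment from $p_1+\hat r_m v_i(t)$ to $p_1+3\hat r_m v_i(t)$, (b) a bent connecting arc in the annulus $\bar B_{r_m}(p_1)\setminus \bar B_{3\hat r_m}(p_1)$ starting with tangent $v_i(t)$ at $p_1+3\hat r_m v_i(t)$ and exiting $\partial B_{r_m}(p_1)$ at the fixed point $p_1+r_m v_i$ with tangent $v_i$, and (c) the untouched exterior. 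The bent arc is produced by the cut-off interpolation recipe of Proposition~\ref{bending_tubes}, with curvature of order $(r_m-3\hat r_m)^{-1}$; smoothness in $t$ follows from smoothness of the cut-offs.

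\emph{Verification and main obstacle.} Inside $\bar B_{3\hat r_m}(p_1)$ the new curves are straight rays in pairwise distinct directions $v_i(t)$, so they are automatically disjoint and remain radial, preserving the defining properties of $\mathcal{X}$. Outside $\bar B_{r_m}(p_1)$ nothing changes, so no other ball or curve is disturbed and the path is trivial there as required. The substantive check, and the main obstacle, is mutual disjointness of the bent arcs in the annulus; this is exactly where the hypothesis $n\geq 2$ enters. Because the arcs are $1$-dimensional in $\R^{n+1}$ with $n+1\geq 3$, one can either choose the path $(v_i(t))$ and the cut-off profiles generically so that the finitely many arcs never meet, or, equivalently, perform the rearrangement one curve at a time, each time moving a single arc while treating the others as fixed obstacles to be avoided (easy in codimension $\geq 2$). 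For $n=1$ the cyclic order of the $v_i$ on $S^1$ would obstruct this, which is why we use the 2-convex setting. After concatenating with one final uniform shrinking of every ball from $\hat r_m$ to the desired $\tilde r_m$, and relaxing the control parameters $(\mathbb{A},b)$ as necessary along the way, we obtain the required smooth path in $\mathcal{X}$.
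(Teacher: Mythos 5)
Your overall strategy --- shrink the marbles to open up an annular region inside $\bar B_{r_m}(p_1)$, keep every curve a radial graph over its direction there, and then rotate the attachment directions on the boundary sphere while fixing everything outside --- is exactly the approach of the paper's proof of Lemma \ref{tentacle_rearange_lem}; the paper simply does it one curve at a time by induction on the number of offending curves, using the explicit formula $\gamma_2^t(r)=r\mu(\eta(r)t)$ for a path $\mu$ on the unit sphere chosen to avoid the (radial) directions of all the other curves.

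The one step that does not hold up as written is your resolution of what you correctly identify as the main obstacle, namely the mutual disjointness of the bent arcs. The appeal to ``easy in codimension $\geq 2$'' fails in the lowest admissible dimension $n=2$: there the ambient space is $\R^{3}$, the trace swept out by a single moving arc over the isotopy is $2$-dimensional, and a fixed curve is $1$-dimensional, so $2+1=3$ and a generic intersection consists of isolated points that are stable under perturbation rather than removable. The simultaneous version has the same problem: for two moving arcs the map $(r,t)\mapsto(w_i(r,t),w_{i'}(r,t))\in S^2\times S^2$ has a $2$-dimensional domain and must miss the codimension-$2$ diagonal, which is again not a generic condition. So neither of your two proposed fixes is a valid general-position argument when $n=2$, a case the Main Theorem covers.

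The repair is already implicit in your own construction and requires no genericity. Use the \emph{same} radial cutoff $\eta(r)$ for every arc and define the arc of $\gamma_{j_i}$ at time $t$ to be $r\mapsto p_1+r\,v_i(\eta(r)t)$. Then at each fixed $(r,t)$ the $k$ directions are exactly the configuration $(v_1(s),\ldots,v_k(s))$ with $s=\eta(r)t$, which is a point of $F_k(S^n)$ and hence consists of pairwise distinct directions; since all arcs are radial graphs, distinctness of directions at each radius is equivalent to disjointness of the arcs, and the boundary conditions (radial in direction $v_i(t)$ near the small ball, equal to the untouched curve near $\partial B_{r_m}(p_1)$) are met because $\eta\equiv 1$ near the inner sphere and $\eta\equiv 0$ near the outer one. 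Equivalently, one can follow the paper and move one curve at a time along a path in $S^n$ minus the finitely many fixed directions, which is path-connected for $n\geq 2$. With this modification your argument is complete and coincides with the paper's.
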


\begin{figure}[H]
\includegraphics[width=13cm]{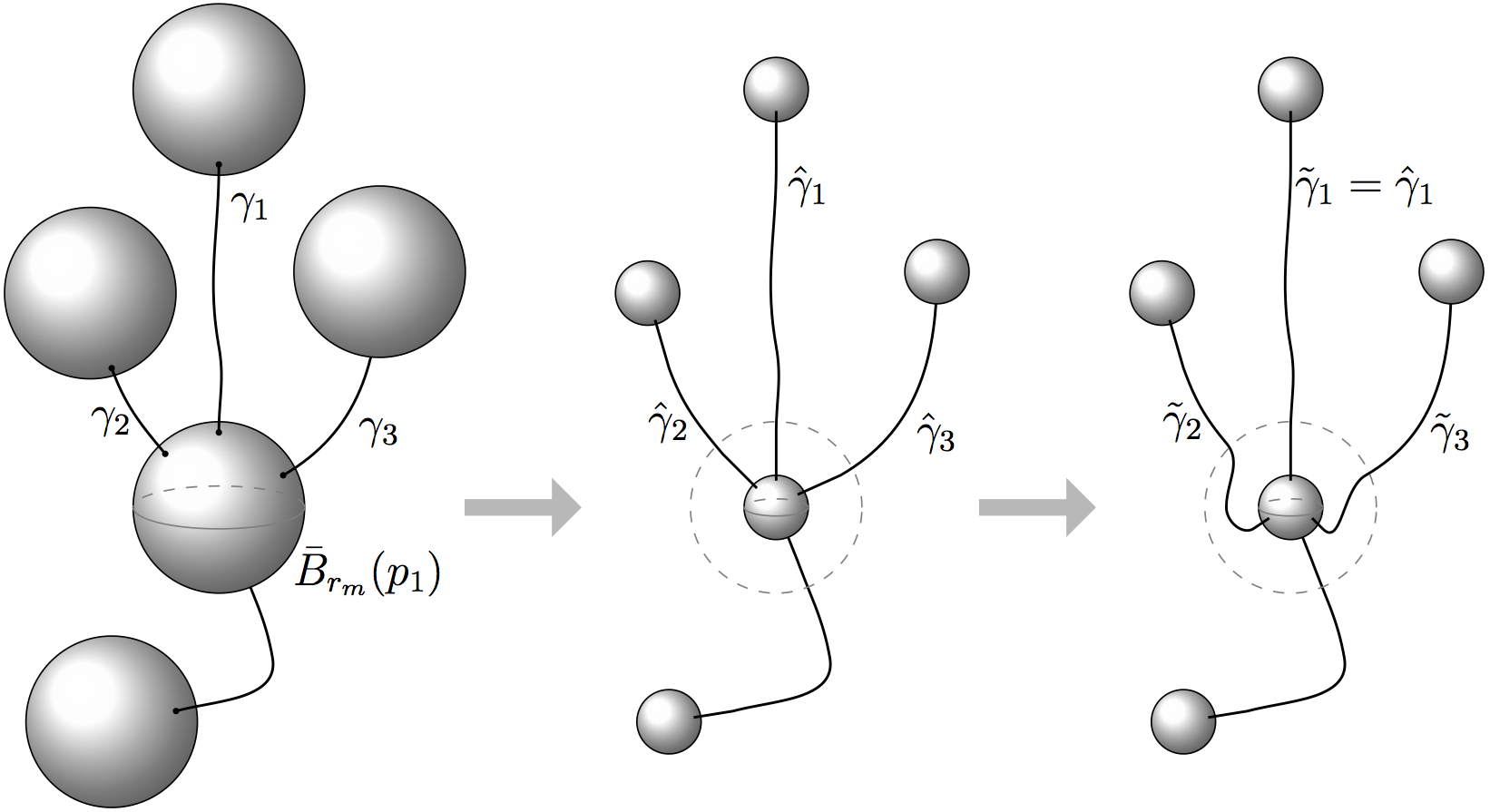}
\caption{The idea of the proof of the Rearrangements Lemma is to first shrink the marble radius and then use the space gained this way to move the curves that meet $\bar{B}_{r_m}(p_1)$ in the same hemisphere as $\gamma_1$.}
\end{figure}

\begin{proof}
This is done by inducting on the number of curves meeting $\bar{B}_{r_m}(p_1)$ in the hemisphere of $\gamma_1$.  If there are no other curves, we are done. Otherwise, assume without loss of generality that $\gamma_2$ meets $\bar{B}_{r_m}(p_1)$ in the hemisphere. We now construct a smooth path in $\mathcal{X}$ as follows:

We first shrink the marble radius by a factor $100$ and extend all curves by straight lines. This yields a configuration $(\bigcup_i \bar{B}_{\hat{r}_m}(p_i),\bigcup_j \hat{\gamma}_j)\in \mathcal{X}$ such that $\hat{\gamma}_j\cap \bar{B}_{100\hat{r}_m}(p_i)$ is a straight ray (possibly empty) for all $i,j$.

To simplify notation assume without loss of generality that $p_1=0$ and $\hat{r}_m=1$. Let $\eta(r)=\chi_{10}(20-r)$. Choose a smooth curve $\mu(t)\in \partial B_{1}(0)$ from $\mu(0)=\hat{\gamma}_2\cap \bar{B}_1(0)$ to a point $\mu(1)$ on the other hemisphere, such that $\mu(t)\cap \gamma=\emptyset$ for all $t>0$. Consider the deformation
\begin{equation}
\gamma^t_2(r)=r\mu(\eta(r)t) \qquad r\in [1,100].
\end{equation}

Then $\gamma^t_2(r)=r \mu (t)$ for $r\in [1,3]$ for every $t\in [0,1]$, which is a straight line as required by condition (2) from Definition \ref{def_marbletree}, and $\gamma^t_2(r)=\gamma^0_2(r)$ for $r\geq 20$. Thus $\gamma^t_2$ gives a smooth path in $\mathcal{X}$ starting from $\gamma^0_2(r)=r\mu(0)=\hat{\gamma}_2(r)$. This modification decreases the number of meetings in the hemisphere defined by $\hat{\gamma}_1$, and we are thus done by the induction hypothesis.
\end{proof}

Reducing $r_s$ and applying $\mathcal{G}_{r_s}$ to the path of configurations from Lemma \ref{tentacle_rearange_lem} we can move strings of a marble tree to the other hemisphere. Another basic reduction is to further decrease the marble radius and the string radius, to ensure that suitable neighborhoods don't contain any other marbles or strings and to ensure that tubular neighborhoods of curves are mean convex even at the marble scale.

By the above reductions and rescaling, to prove Theorem  \ref{thm_marble_tree} it is thus enough to prove the following proposition.

\begin{proposition}[Marble reduction]\label{leaf_contract}
Let $(\bigcup_{i\geq 1} \bar{B}_1(p_i),\bigcup_{j\geq 1} \gamma_j)\in \mathcal{X}$ be a controlled configuration, and suppose (after relabeling) that $\gamma_1$ connects $\partial \bar{B}_1(p_1)$ to $\partial \bar{B}_1(p_2)$, that no other curves meet $\partial \bar{B}_1(p_1)$, and that the $\sqrt{2}$-neighborhood of $\gamma_1$ does not intersect any other balls or curves. Also assume that $b$ is bigger than $\zeta^{-1}(\alpha,\beta)$ from Proposition \ref{mean_convex_capped_off}.

Then for $r_s$ small enough, the marble tree $\mathcal{G}_{r_s}(\bigcup_{i\geq 1} \bar{B}_{1}(p_i),\bigcup_{j\geq 1} \gamma_j)$ is isotopic via 2-convex domains to $\mathcal{G}_{r_s}(\bigcup_{i\geq 2} \bar{B}_{1}(p_i),\bigcup_{j\geq 2} \gamma_j)$.
\end{proposition}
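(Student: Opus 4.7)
The isotopy will have three stages. \emph{Stage 1} deforms the leaf marble $\bar{B}_1(p_1)$ together with its gluing collar into a standard cap of radius $r_s$ sitting at the end of an extension of $\gamma_1$, so that the configuration becomes the target marble tree $T_1=\mathcal{G}_{r_s}(\bigcup_{i\geq 2}\bar{B}_1(p_i),\bigcup_{j\geq 2}\gamma_j)$ unioned with a capped-off tube around $\gamma_1$ and minus the clean ball end near $p_2^{(\partial)}:=\gamma_1\cap\partial B_1(p_2)$. \emph{Stage 2} slides this cap monotonically along $\gamma_1$ toward $p_2^{(\partial)}$, shortening the capped tube. \emph{Stage 3} absorbs the remaining short capped tube back into $\bar{B}_1(p_2)$ to recover $T_1$. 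Throughout, we exploit the rotational symmetry about the straight segment $\gamma_1\cap B_3(p_i)$ (Definition \ref{def_marbletree}(2)), together with the ``moreover'' clause of Theorem \ref{thm_attachstrings} which tells us that near each $p_i^{(\partial)}$ the configuration is given by the explicit round model of Proposition \ref{gluing_sphere_cylinder}; Stages 1 and 3 thereby reduce to 1-variable problems for a rotation-surface profile $u$.

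Stage 2 is immediate: parameterize the cap position by a moving base point $q_s$ on the extended curve $\hat\gamma_1$; by Proposition \ref{mean_convex_capped_off} (applicable since $b>\zeta^{-1}(\alpha,\beta)$ and $r_s$ may be taken arbitrarily small), $CN_{r_s}^+(\hat\gamma_1,q_s)$ is $\beta$-uniformly 2-convex for every admissible $q_s$, so monotonically sliding $q_s$ toward $p_2^{(\partial)}$ gives a 2-convex isotopy, trivial outside a thin neighborhood of $\gamma_1$. Stage 3 takes place within the $\delta(r_s)$-neighborhood of $p_2^{(\partial)}$ where the round-model description of Theorem \ref{thm_attachstrings} holds: one linearly interpolates between the profile ``ball $+$ $C_\delta$-collar $+$ short cylinder $+$ cap'' and the clean profile $\sqrt{1-x^2}$ of $\bar{B}_1(p_2)$. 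Since $\bar{B}_1(p_2)$'s profile satisfies $u''<0<1$ and the residual pimple is convex at scale $r_s\ll 1$, Proposition \ref{linear_interpolation} ensures that each linear combination is 2-convex, and the gluing principle of Remark \ref{gluing_principle} adjusts higher-order derivatives at the transition points so that the whole profile glues together smoothly.

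Stage 1 is the essential step. The initial profile consists of the unit ball $u_0=\sqrt{1-x^2}$ glued via the rescaled collar $C_{\varrho^{-1}(r_s)}$ to the cylinder $u\equiv r_s$, while the target profile is the cylinder $u\equiv r_s$ terminated by the scaled standard cap $r_s\, u^{\mathrm{st}}(\cdot/r_s)$. Because the collar region has $u''$ of order $r_s^{-1}\gg 1$, a direct linear interpolation of $u_0$ and $u_1$ violates the hypothesis of Proposition \ref{linear_interpolation}. Instead, we parameterize by the marble radius $R$, decreasing $R$ from $1$ down to some $R_\ast$ comparable to $r_s$: at parameter $R$ the configuration is a ball of radius $R$, centered at the point on the axial extension of $\gamma_1$ at distance $R$ past $p_1^{(\partial)}$, attached to the cylinder through the rescaled explicit model $R\cdot C_{\varrho^{-1}(r_s/R)}$. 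Proposition \ref{gluing_sphere_cylinder} certifies that each such configuration is 2-convex, and smoothness of $\delta\mapsto C_\delta$ makes the path smooth. Once $R$ reaches $R_\ast$, the residual ball-and-collar lives entirely at scale $r_s$, and a final rotation-surface interpolation --- again combining Propositions \ref{good_transition}, \ref{Third_order_trim}, \ref{linear_interpolation} and the gluing principle of Remark \ref{gluing_principle} --- converts it to a standard cap of radius $r_s$.

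\textbf{Main obstacle.} The technical heart is Stage 1: bridging the regime where the leaf is much larger than the string with the regime where everything is at the string scale, while preserving 2-convexity against the very large second derivative of the collar profile. The resolution is to deform along the one-parameter explicit family $C_\delta$ of Proposition \ref{gluing_sphere_cylinder} rather than to freely deform profile functions --- 2-convexity is then guaranteed parameter-by-parameter, and only the final small-scale fit to a standard cap requires the softer tools of Section \ref{sec_transition}. Once Stage 1 is in place, Stages 2 and 3 follow directly from the building blocks already assembled.
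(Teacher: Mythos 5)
Your Stage 1 is a genuinely different (and legitimate) route from the paper's. The paper keeps the marble at unit size and first \emph{fattens the whole string} from radius $r_s$ to $\varrho(0.99)$, so that the entire leaf-plus-collar is the unit-scale $C_{0.99}$, to which \eqref{eq_sec_der_less_one} and Proposition \ref{linear_interpolation} apply directly; you instead keep the string thin and \emph{shrink the marble} along the explicit family $R\cdot C_{\varrho^{-1}(r_s/R)}$ until $\delta=\varrho^{-1}(r_s/R_\ast)=0.99$, arriving at the same model up to scale. Since $2$-convexity is scale-invariant and the final interpolation to a standard cap can be performed at unit scale and rescaled, this works, and your Stage 2 is the paper's tentacle contraction. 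The trade-off is that the paper's fattening uses the hypotheses ($b>\zeta^{-1}(\alpha,\beta)$ and the free $\sqrt2$-neighborhood of $\gamma_1$) to justify a tube of radius up to $\varrho(0.99)$, whereas your shrinking stays inside $\bar B_1(p_1)$; but the fattening pays off at the \emph{other} end of the string, which is where your argument breaks.

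The gap is in Stage 3. After your Stages 1--2 the residual object at $p_2$ is the unit ball joined to a short capped tube of radius $r_s$ through the \emph{thin} collar $C_{\varrho^{-1}(r_s)}$. The profile of that collar has $u''$ of order $\varrho^{-1}(r_s)^{-3/2}\gg 1$ (this is exactly the parabola with coefficient $\tfrac{1}{4\delta_1^3}$ in the proof of Proposition \ref{gluing_sphere_cylinder}), so the hypothesis $u''<1$ of Proposition \ref{linear_interpolation} fails --- the very obstruction you correctly identified and avoided in Stage 1. The cross term $-t(1-t)\,u_1u_0''$ in $P[u_t]$ is then of size $\delta_1^{-2}$ and negative, and is not controlled by $1+(u_t')^2$ for $t$ close to $1$, so the linear interpolation genuinely loses $2$-convexity; convexity of the cap alone does not give $u''<1$ for the collar, and the gluing principle of Remark \ref{gluing_principle} cannot rescue an interpolation that violates the differential inequality on an open set. (A further, smaller, issue: the stub protrudes past $x=1$, where $\sqrt{1-x^2}$ is undefined, so the interpolation target must be a concave majorant $v\geq u$ of the whole profile, not the ball profile itself.) The repair is to insert, before Stage 3, a fattening of the residual capped stub from radius $r_s$ to $\varrho(0.99)$ through the smooth family $C_\delta$ --- this is where the hypotheses $b>\zeta^{-1}(\alpha,\beta)$ and the free $\sqrt2$-neighborhood of $\gamma_1$ are actually needed --- after which $u''<1$ holds everywhere by \eqref{eq_sec_der_less_one} and the paper's concave-majorant interpolation (its Steps 4--5) applies verbatim.
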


\begin{proof}
We prove this proposition in five steps which are illustrated in Figure \ref{fig.5steps} below. Recall that Proposition \ref{gluing_sphere_cylinder} gives an explicit description of how the connected sum looks like. This  allows us to make all the calculations in terms of the standard model $C_{\delta}$.

\emph{Step 1:} Since $C_{\delta}$ depends smoothly on $\delta\in (0,1)$ and since the control parameter of the curve satisfies $b>\zeta^{-1}(\alpha,\beta)$ by assumption, we can increase the radius of the neck around $\gamma_1$ from $r_s$ to $r=\varrho(0.99)$.

\emph{Step 2:} We deform this configuration to a standard cap as follows. Let $q_1$ be the point where $\gamma_1$ meets $\partial B_1(p_1)$. Without loss of generality we assume $p_1=0$ and $q_1=(1,0,\ldots,0)$. Let $\tilde{\gamma}_1=\gamma_1\cup \ell$ where $\ell$ is the straight line from $-q_1$ to $q_1$.
Consider a standard cap $K$ of neck-radius $r=\varrho(0.99)$, tangent to $C_{0.99}$ at $-q_1$. Note that $K\subset C_{0.99}$ by definition of the standard cap. Expressing both domains as graphs of rotation, by \eqref{eq_sec_der_less_one} and Proposition \ref{linear_interpolation} we can find an isotopy from $C_{0.99}$ to $K$ via two-convex domains that stays inside $C_{0.99}$ and is trivial outside $C_{0.99}\cap B_2(0)$.

\emph{Step 3:} We contract the tentacle as follows. We have created a left capped-off tube
$CN^-_r(\tilde{\gamma}_1,-q_1)$. Now, let $\tilde{\gamma}_1:[0,L]\to \mathbb{R}^{n+1}$ be the parametrization by arclength starting at $\tilde{\gamma}_1(0)=-q_1$. The tentacle can then be contracted via the isotopy $\{CN^-_r(\tilde{\gamma}_1,\tilde{\gamma}_1(t))\}_{t\in [0,L-2]}$.

After rigid motion and ignoring the strings on the other hemisphere, we are thus left with the domain
\begin{equation}
D:=\left(C_{0.99}\cap \{x_1 \leq 2\}\right)\cup \left(CN^+_{r}(\gamma,(3,0,\ldots,0))\cap \{x_1 \geq 2\}\right),
\end{equation}
where $\gamma(s)=(s,0,\ldots,0)$ with $s\in [1,3]$. The final task is to find a 2-convex isotopy from $D$ to $\bar{B}_1(0)$ that is trivial in the left half-space.

\emph{Step 4:} We first want to deform $D$ to a convex domain $\tilde{D}$. In order to do so, let $u:[-1,3]\rightarrow [0,1]$ be the function whose surface of revolution is $D$. By \eqref{eq_sec_der_less_one} we have $u''<1$. For $\eps>0$ sufficiently small there exists a smooth function $v:[-1,3]\rightarrow [0,1]$ with
\begin{enumerate}[\hspace{3mm}(1)]
\item $v=\sqrt{1-x^2}$ on $[-1,\eps]$,
\item $v$ is concave,
\item $v\geq u$,
\item $\lim_{x\rightarrow 3} v(x)=0$ and the surface of revolution of $v$ is smooth.
\end{enumerate}
By Proposition \ref{linear_interpolation} the domain $D$ can be isotoped through 2-convex domains to the convex domain $\tilde{D}$ given by $v$. 

\emph{Step 5:} As $\tilde{D}$ is convex, $t\bar{B}_1(0)+(1-t)\tilde{D}$ is the desired isotopy deforming $\tilde{D}$ to $\bar{B}_1(0)$ while keeping $\{x_1 \leq \eps\}$ fixed.  
\end{proof}

\begin{figure}[H]
\includegraphics[width=13.5cm]{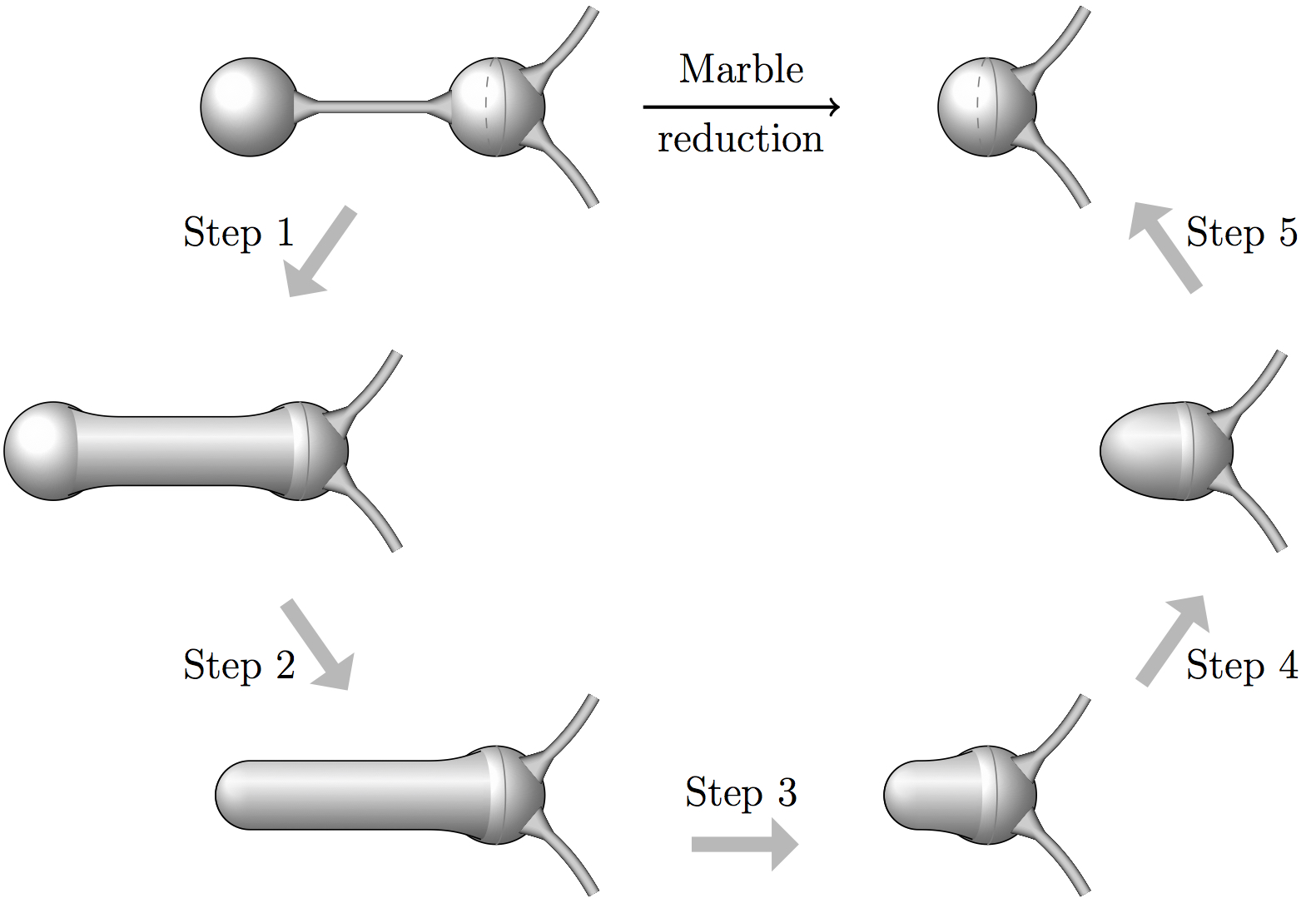}
\caption{An illustration of he five steps in which we construct the isotopy that reduces the number of marbles.}\label{fig.5steps}
\end{figure}

\section{Necks and surgery}\label{sec_necksurgery}

The goal of this section is to analyze the transition from the pre-surgery domain to the post-surgery domain.
We start by recalling some definitions from \cite{HK_surgery}.

\begin{definition}[{$\delta$-neck \cite[Def 2.3]{HK_surgery}}]
We say that $K\subset \R^{n+1}$ has a \emph{$\delta$-neck} with center $p$ and radius $r$, if
$r^{-1}\cdot( K-p)$ is $\delta$-close in $C^{\lfloor 1/\delta\rfloor}$ in $B_{1/\delta}(0)$ to a solid round cylinder $\bar{D}^{n}\times \R$ with radius $1$.
\end{definition}

\begin{definition}[{Replacing a $\delta$-neck by standard caps \cite[Def 2.4]{HK_surgery}}]\label{def_surgery}
We say that a $\delta$-neck with center $p$ and radius $r$ is \emph{replaced by a pair of standard caps} with cap separation parameter $\Gamma$,
if the pre-surgery domain $K^-$ is replaced by a post-surgery domain $K^\sharp\subset K^-$ such that:
\begin{enumerate}[\hspace{3mm}(1)]
\item the modification takes places inside a ball $B=B_{5\Gamma r}(p)$.
 \item there are bounds for the second fundamental form and its derivatives:
$$\sup_{\D K^\sharp\cap B}\abs{\nabla^\ell A}\leq C_\ell r^{-1-\ell}\qquad (\ell=0,1,2,\ldots,20).\footnote{In the case of mean curvature flow with surgery one considers the final time slice of a strong $\delta$-neck and the derivative bounds then hold for all $\ell\geq 0$.}$$
 \item for every point $p_\sharp\in \partial K^\sharp\cap B$ with $\lambda_1(p_\sharp)< 0$, there is a point
 $p_{-}\in\partial K^{-}\cap B$ with $\frac{\lambda_1}{H}(p_{-})\leq\frac{\lambda_1}{H}(p_{\sharp})$.
 \item the domain $r^{-1}\cdot(K^\sharp-p)$ is $\delta'(\delta)$-close in $C^{\lfloor 1/\delta'(\delta)\rfloor}$ in $B_{10\Gamma}(0)$ to a pair of opposing standard caps (see Definiton \ref{def_stdcap}),
that are at distance $\Gamma$ from the origin, where $\delta'(\delta)\to 0$ as $\delta\to 0$.
\end{enumerate}
\end{definition}

\begin{definition}[Points modified by surgery]\label{def_modifiedpoints}
We say that an open set $U$ contains \emph{points modified by surgery} if $(K^-\setminus K^\sharp)\cap U\neq \emptyset$.
\end{definition}

\begin{lemma}[Almost straight line]\label{lemma_almost_straight}
There exists a function $\xi(\delta')$ with $\xi(\delta')\to \infty$ as $\delta'\to 0$ with the following significance.
If $K$ is $\delta'$-close in $C^{\lfloor 1/\delta'\rfloor}$ in $B_{10\Gamma}(0)$ to a pair of opposing standard caps that are at distance $\Gamma$ from the origin, then there exists an $\xi(\delta')$-controlled curve inside the $\xi(\delta')^{-1}$-neighborhood of the $x$-axis that
connects the two components of $K$ and meets $\partial K$ orthogonally.
\end{lemma}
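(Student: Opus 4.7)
The plan is to construct $\gamma$ explicitly as a nearly-straight graph over the $x_1$-axis, using a fixed smooth template modulated by two vectors of size $O(\delta')$ that encode the perturbation at the two cap-tips.

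First, I would locate the endpoints. The two opposing standard caps have their cap-tips on the $x_1$-axis at points of the form $(\pm\Gamma_0,0,\ldots,0)$, where the outward unit normal equals $\pm e_1$. By the $C^{\lfloor 1/\delta'\rfloor}$-closeness hypothesis (which, for $\delta'$ small, entails at least $C^3$-closeness), the Gauss map of each component of $\partial K\cap B_{10\Gamma}(0)$ is $O(\delta')$-close to that of the corresponding standard cap. An implicit function argument then yields unique points $p_L\in\partial K_L$ and $p_R\in\partial K_R$ on the two components such that the outward normals there are exactly $+e_1$ and $-e_1$ respectively, with $p_L=(a,v_L)$, $p_R=(b,v_R)$, $a\approx -\Gamma_0$, $b\approx +\Gamma_0$, and $|v_L|,|v_R|\leq C_0\delta'$ in $\R^n$.

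Next, I would define the curve. Fix once and for all a smooth monotone $\eta:[0,1]\to[0,1]$ with $\eta(0)=0$, $\eta(1)=1$, $\eta'(0)=\eta'(1)=0$, and bounded higher derivatives, and set
\[
\phi(t)=\bigl(1-\eta(\tfrac{t-a}{b-a})\bigr)v_L+\eta(\tfrac{t-a}{b-a})v_R,\qquad \gamma(t)=(t,\phi(t)),\quad t\in[a,b].
\]
Then $\gamma(a)=p_L$, $\gamma(b)=p_R$, and $\phi'$ vanishes at both endpoints, so the unit tangent is $e_1$ there, which by the choice of $p_L,p_R$ is parallel to the outward normal at those points, giving the required orthogonality. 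Because $|v_L|,|v_R|\leq C_0\delta'$ and $\eta$ is fixed, one has $|\phi^{(j)}|\leq C_j\delta'$ for all low $j$.

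Finally, it remains to verify $\xi(\delta')$-control. The curvature $|\kappa|$ and its arclength derivative $|\partial_s\kappa|$ are both $O(\delta')$; the curve lies within $O(\delta')$ of the $x_1$-axis; and condition (c) for distinct components is vacuous since $\gamma$ is connected. For the normal injectivity radius, $\gamma$ is $C^1$-close to a straight segment of bounded length with slowly varying orthogonal hyperplanes, so the normal exponential map embeds on tubes of radius much larger than $\delta'^{-1/2}$ by a direct geometric computation. Setting $\xi(\delta'):=c\,\delta'^{-1/2}$ for a small universal constant $c$, all three control conditions then hold simultaneously with $\xi(\delta')\to\infty$ as $\delta'\to 0$. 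I expect the main (mild) obstacle to be balancing the pointwise curvature bound against the derivative-of-curvature bound using a single $\xi$, which is precisely what dictates the square-root scaling.
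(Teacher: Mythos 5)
Your construction is correct and fills in the details of the paper's one-line proof (``This follows from a standard perturbation argument''): locate the cap tips with normal $\pm e_1$ via the implicit function theorem, interpolate with a fixed profile whose derivative vanishes at the endpoints, and verify the $b$-controlled estimates with $\xi(\delta')\sim(\delta')^{-1/2}$ forced by the $|\partial_s\kappa|\le b^{-2}$ condition. The only minor omission is that you should also observe that the interior of $\gamma$ lies in $\R^{n+1}\setminus K$ (needed for the curve to be usable in the gluing construction), which holds since the region between the two opposing caps is exterior to $K$.
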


\begin{proof}
This follows from a standard perturbation argument.
\end{proof}

\begin{proposition}[Deforming neck to caps connected by a string]\label{lemma_neckconnectedsum}
There exists a constant $\bar{\delta}>0$ with the following significance.
Let $\delta\leq\bar{\delta}$, assume $K^\sharp$ is obtained from $K^-$ by replacing a $\delta$-neck  with center $0$ and radius $1$ by a pair of standard caps, and let $\gamma$ be a $\xi(\delta'(\delta))$-controlled curve connecting the caps as in Lemma \ref{lemma_almost_straight}.
Then, for $r_s$ small enough, there exists an isotopy between  ${\mathcal G}_{r_s}(K^\sharp,\gamma)$ and $K^-$, that preserves two-convexity and is trivial outside $B_{6\Gamma}(0)$.
\end{proposition}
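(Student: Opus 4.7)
The plan is to reduce the problem to the rotationally symmetric round model, perform the main isotopy there by continuously fattening the thin connecting string up to a unit cylinder, and transfer back to the actual (non-symmetric) setting via small perturbations.

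Since $K^-$ is $C^{\lfloor 1/\delta\rfloor}$-close to an exact round cylinder on $B_{6\Gamma}(0) \subset B_{1/\delta}(0)$, $K^\sharp$ is $C^{\lfloor 1/\delta'(\delta)\rfloor}$-close to an exact opposing pair of standard caps at distance $\Gamma$ on $B_{10\Gamma}(0)$ (Definition \ref{def_surgery}(4)), and $\gamma$ is a nearly straight line on the $x$-axis connecting the caps (Lemma \ref{lemma_almost_straight}), the smoothness of the gluing map in Theorem \ref{thm_attachstrings} implies that ${\mathcal G}_{r_s}(K^\sharp,\gamma)$ is close to its idealized rotationally symmetric model $\widetilde{K}^\sharp := {\mathcal G}_{r_s}(K^\sharp_{\mathrm{sym}},\gamma_{\mathrm{sym}})$. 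For $\delta$ small, graphical linear interpolation combined with Proposition \ref{linear_interpolation} produces a 2-convex isotopy from ${\mathcal G}_{r_s}(K^\sharp,\gamma)$ to $\widetilde{K}^\sharp$, trivial outside $B_{6\Gamma}(0)$; a symmetric perturbation step at the end will later take the exact unit cylinder back to $K^-$ inside $B_{6\Gamma}(0)$ by the same argument.

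\textbf{Main symmetric isotopy.} In the rotationally symmetric setting, 2-convexity of the revolution profile $u$ reduces to $P[u] = 1 + (u')^2 - u u'' > 0$. After rescaling, the profile of $\widetilde{K}^\sharp$ at each cap-string junction is given by the model $C_{\delta_0}$ from Proposition \ref{gluing_sphere_cylinder} with $\varrho(\delta_0) = r_s$. I would then smoothly increase $\delta$ along the family $\{C_\delta\}$ from $\delta_0$ up to $0.99$, applied symmetrically at both caps: this is a 2-convex isotopy (Proposition \ref{gluing_sphere_cylinder}) that fattens the string from radius $r_s$ to $\varrho(0.99)$ close to $1$, while leaving the cylindrical tails of the caps untouched. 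At $\delta = 0.99$, Proposition \ref{gluing_sphere_cylinder} furnishes $u''_{0.99} < 1$, so by Proposition \ref{linear_interpolation} I may linearly interpolate between $u_{0.99}$ and the constant profile $u \equiv 1$ to reach an exact unit cylinder inside $B_{6\Gamma}(0)$, still through 2-convex profiles.

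\textbf{Main obstacle.} The principal difficulty lies in enforcing that the symmetric isotopy above is genuinely trivial outside $B_{6\Gamma}(0)$ while preserving the strict inequality $P[u] > 0$ near the cutoff region: the natural $C_\delta$ deformation wants to extend along the full length of the string, whereas the profile must remain pinned to the unperturbed cap geometry before $|x_1|$ reaches $6\Gamma$. I would handle this by localizing the $C_\delta$-family to thin collars around each cap-string junction and matching smoothly to the unchanged geometry outside these collars via the gluing principle of Remark \ref{gluing_principle}, altering the second derivative of $u$ on a small scale by the amount needed to effect the fattening while perturbing the zeroth- and first-order terms negligibly. The condition $r_s \ll \Gamma$ provides ample room for these adjustments.
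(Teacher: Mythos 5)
Your overall strategy matches the paper's: reduce to the rotationally symmetric round model, fatten the connecting string via the $C_\delta$ family, linearly interpolate to a cylinder, and transfer back. However, your first step — producing an isotopy from ${\mathcal G}_{r_s}(K^\sharp,\gamma)$ to the symmetric model $\widetilde{K}^\sharp$ by ``graphical linear interpolation combined with Proposition \ref{linear_interpolation}'' — is not correctly justified. Proposition \ref{linear_interpolation} concerns rotationally symmetric profiles with $u'' < 1$; the glued domain ${\mathcal G}_{r_s}(K^\sharp,\gamma)$ is not a surface of revolution, and near the string the curvature is of order $1/r_s$, so the hypothesis $u'' < 1$ fails at the gluing scale. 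Interpolating directly between two glued domains at that scale is exactly the kind of delicate estimate the gluing map was designed to avoid. The cleaner (and, in fact, the paper's) argument is to deform the \emph{pair} $(K^\sharp,\gamma)$ to the exact symmetric configuration first — graphically, using that $\partial K^\sharp$ is a small-$C^{20}$-norm graph over the strictly $2$-convex pair of opposing standard caps, and deforming $\gamma$ similarly via Lemma \ref{lemma_almost_straight} — and only then apply $\mathcal{G}_{r_s}$ to the resulting one-parameter family of controlled configurations. Since $\mathcal{G}_{r_s}$ outputs $2$-convex domains by construction and is smooth in its arguments, the family $\{\mathcal{G}_{r_s}(K^\sharp_t,\gamma_t)\}$ is automatically a $2$-convex isotopy. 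The same remark applies to the final transfer from the exact cylinder back to the $\delta$-neck $K^-$.

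Your ``main obstacle'' — ensuring triviality outside $B_{6\Gamma}(0)$ while preserving $P[u]>0$ near the cutoff — is not actually the main difficulty. The $C_\delta$ fattening happens near the cap tips (at distance $\Gamma$ from the origin), and the subsequent linear interpolation to the constant profile is trivial wherever the fattened profile already equals $1$, i.e. on the cylindrical tails. Since $K^\sharp$ agrees with $K^-$ outside $B_{5\Gamma}(0)$ by Definition \ref{def_surgery}(1), the required triviality is essentially built into the construction and requires no extra gluing-principle argument.
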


\begin{proof}
$\partial K^{\sharp}$ can be expressed as a graph in $B_{6\Gamma}(0)$ with small $C^{20}$-norm over a pair of opposing standard caps of distance $\Gamma$ from the origin. For $\delta$ small enough, we can thus find a two-convex isotopy $\{K^\sharp_t\}_{t\in[0,1]}$ 
that is trivial outside of $B_{6\Gamma}(0)$, starting at $K^{\sharp}_0=K^{\sharp}$, such that $K_1^{\sharp}\cap B_{5\Gamma}(0)$ is a pair of opposing standard caps of distance $\Gamma$ from the origin. The isotopy can be constructed such that it remains $\tilde{\delta}(\delta)$-close in $B_{10\Gamma}(0)$ to the pair of opposing standard caps, for some function $\tilde{\delta}(\delta)$ with $\tilde{\delta}(\delta)\to 0$ as $\delta\to 0$. Similarly, there is a family of curves $\{\gamma_t\}_{t\in[0,1]}$ as in the conclusion of Lemma \ref{lemma_almost_straight} connecting the $\tilde{\delta}(\delta)$-caps of $K^{\sharp}_t$, starting at $\gamma_0=\gamma$, such that $\gamma_1$ is a straight line connecting the tips of $K^{\sharp}_1$. For $\delta$ and $r_s$ small enough, $\{\mathcal{G}_{r_s}(K^{\sharp}_t,\gamma_t)\}_{t\in[0,1]}$ forms the first step of the asserted isotopy.

Taking into account property (a) of the standard cap (see Definition \ref{def_stdcap}) we can now increase the neck radius from $r_s$ to $\varrho(0.99)$, where $\varrho$ is the function from Proposition \ref{gluing_sphere_cylinder}. The function $u$ describing the resulting domain in $B_{5\Gamma}(0)$ and the constant function $v:=1$ satisfy the assumptions of Proposition \ref{linear_interpolation}, so we can deform to a cylinder in $B_{5\Gamma}(0)$. Finally, similarly as in the first step in this proof, we can deform this to our $\delta$-neck $K^-$.   
\end{proof}

\begin{figure}[H]
\includegraphics[width=10cm]{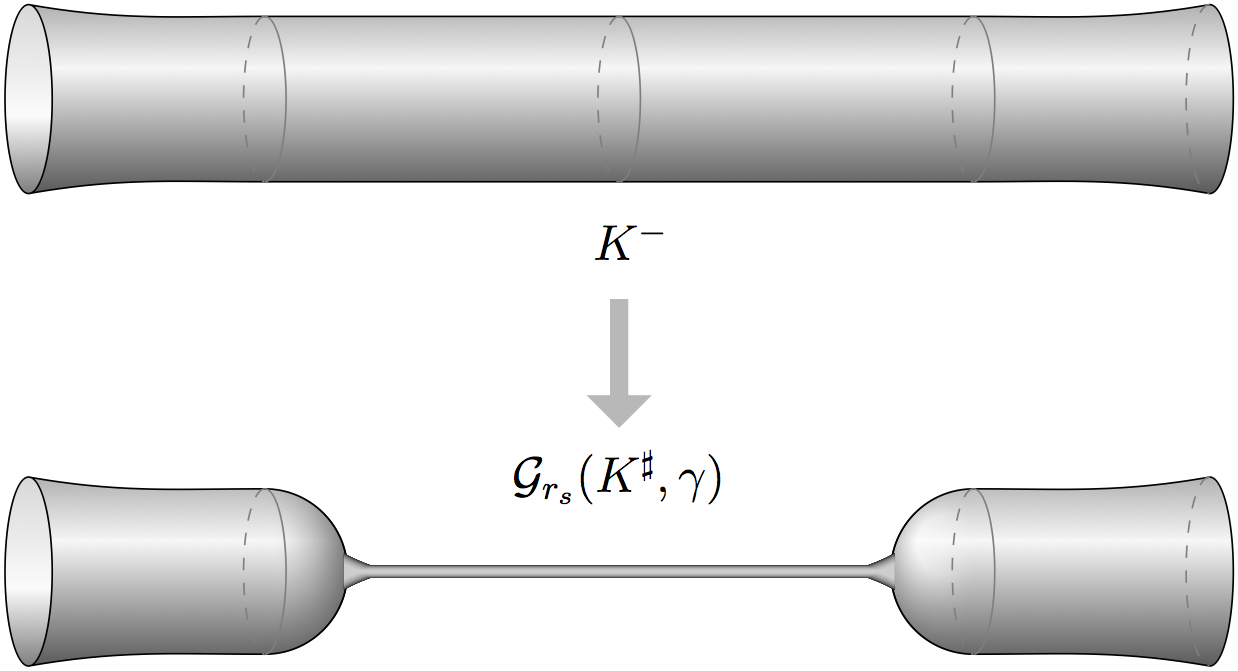}
\caption{Proposition \ref{lemma_neckconnectedsum} constructs an isotopy between the pre-surgery neck and a connected sum of the two post-surgery caps along an almost straight line.}
\end{figure}

\section{Isotopies for discarded components}\label{sec_discarded}

The goal of this section is to construct certain isotopies that will be used later in the proof of the main theorem to deal with the discarded components. We start with the following trivial observation.

\begin{proposition}[Convex domain]\label{lemma_convexcontrolled}
If $K\subset \mathbb{R}^{n+1}$ is a smooth compact convex domain, then there exists a monotone convex isotopy $\{K_t\}_{t\in [0,1]}$ that is trivial outside $K$, starting at $K_0=K$, such that $K_1$ is a round ball.
\end{proposition}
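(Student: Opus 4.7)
The plan is to Minkowski-interpolate $K$ with an inscribed round ball. Since the compact convex domain $K$ has nonempty interior, I choose a closed round ball $B = \bar{B}_r(p)\subseteq \mathrm{int}(K)$ and set
\[
K_t := (1-t)K + tB, \qquad t\in [0,1],
\]
where $+$ denotes Minkowski addition. Then $K_0=K$, $K_1=B$ is a round ball, each $K_t$ is convex (Minkowski sum of convex sets), and since $B\subseteq K$ one has $K_t \subseteq (1-t)K + tK = K$, so the isotopy is trivial outside $K$.

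Monotonicity is immediate from support functions: if $h_L(\theta)=\sup_{x\in L}\langle x,\theta\rangle$ denotes the support function of a compact convex body $L$, then by additivity and homogeneity of the support function, $h_{K_t}=(1-t)h_K + t h_B$, and $B\subseteq K$ is equivalent to $h_B\leq h_K$. Hence for $0\leq t_1\leq t_2\leq 1$,
\[
h_{K_{t_2}} - h_{K_{t_1}} = (t_2-t_1)(h_B-h_K)\leq 0,
\]
which gives $K_{t_2}\subseteq K_{t_1}$, i.e.\ monotonicity.

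Smoothness of the family is the only point that requires a remark. For $t>0$, Minkowski summation with the ball $tB$ produces a smooth and strictly convex boundary, parametrized by its Gauss map via $\theta\mapsto (1-t)x_K(\theta)+ t(p+r\theta)$, where $x_K(\theta)\in\partial K$ has outer normal $\theta$. This is jointly smooth in $(t,\theta)$ and matches the given smooth $\partial K$ at $t=0$. If one wishes to sidestep any delicacy at $t=0$ caused by vanishing Gauss curvature of $\partial K$, one can first run convex mean curvature flow for a brief initial time (which is monotone, preserves convexity, and makes the boundary strictly convex at positive time) and then perform the Minkowski interpolation from the resulting strictly convex domain. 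I expect this smoothness verification to be the only non-immediate step; all the geometric content of the proposition reduces to the elementary fact $h_{(1-t)K+tB} = (1-t)h_K + t h_B$.
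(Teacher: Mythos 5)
Your proposal is correct and is essentially the paper's own proof: the authors simply take an inscribed round ball $\bar{B}\subset K_0$ and use the Minkowski interpolation $K_t = t\bar{B}+(1-t)K_0$. Your additional remarks on monotonicity via support functions and on smoothness at $t=0$ only flesh out details the paper leaves implicit.
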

\begin{proof}
Choose a round ball $\bar{B}\subset K_0$. Then $K_t:=t \bar{B}+(1-t)K_0$ does the job.
\end{proof}

\begin{definition}[$(C,\eps)$-cap]\label{def_C_cap}
A $(C,\eps)$-cap is a  strictly convex noncompact domain $K\subset\mathbb{R}^{n+1}$ such that every point outside some compact subset of size $C$ is the center of an $\eps$-neck of radius 1.
\end{definition}

\begin{definition}[capped $\eps$-tube]\label{def_capped_off_chain}
A capped $\eps$-tube is a 2-convex compact domain $K\subset\mathbb{R}^{n+1}$ diffeomorphic to a ball, together with a controlled connected curve $\gamma\subset K$ with endpoints on $\partial K$ such that:
\begin{enumerate}
\item If $\bar{p}_\pm$ denote the endpoints of $\gamma$ then $K\cap B_{2CH^{-1}(\bar{p}_\pm)}(\bar{p}_\pm)$ is $\eps$-close (after rescaling to unit size) to either (a)  a $(C,\eps)$-cap (see Definition \ref{def_C_cap}) or (b) a standard-cap (see Definition \ref{def_stdcap}).
\item Every interior point $p\in \gamma$ with $d(p,\bar{p}_+)\geq CH^{-1}(\bar{p}_+)$ and $d(p,\bar{p}_-)\geq CH^{-1}(\bar{p}_-)$  is the center of an $\eps$-neck with axis given by $\partial_s\gamma(p)$. Moreover, if $r$ denotes the radius of the $\eps$-neck with center $p$, then $\gamma$ is  $\eps^{-2}r$-controlled in $B_{\eps^{-1}r}(p)$.
\end{enumerate}
\end{definition}

\begin{proposition}[Isotopy for capped $\eps$-tube]\label{lemma_neckwithstdcaps}
For $\eps$ small enough, every capped $\eps$-tube is isotopic via two-convex domains to a marble tree. Moreover, there exists a finite collection $\mathcal{I}$ of $\eps$-neck points with separation at least $100\max\{\eps^{-1},\Gamma\}\max\{H^{-1}(p),H^{-1}(q)\}$ for every pair $p,q\in \mathcal{I}$, such that the isotopy is monotone outside $\bigcup_{p\in \mathcal{I}}B_{6\Gamma H^{-1}(p)}(p)$.
\end{proposition}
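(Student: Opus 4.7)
The plan is to convert the capped $\eps$-tube into a marble tree by performing a sequence of ``inverse surgeries'' at a carefully chosen discrete set of neck points along $\gamma$, and then monotonely shrinking each resulting piece to a small round ball, so that the strings glued in by the surgeries connect the resulting marbles.

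\textbf{Step 1 (Selecting $\mathcal{I}$).} Parametrize $\gamma$ by arc length from $\bar{p}_-$ to $\bar{p}_+$, and let $\gamma^\circ\subset\gamma$ be the portion of interior points that are centers of $\eps$-necks (so at distance at least $CH^{-1}(\bar{p}_\pm)$ from the endpoints). Starting from the left end of $\gamma^\circ$, greedily pick points $p_1,p_2,\ldots,p_N\in\gamma^\circ$ such that consecutive points satisfy $|p_{i+1}-p_i|\geq 100\max\{\eps^{-1},\Gamma\}\max\{H^{-1}(p_i),H^{-1}(p_{i+1})\}$, and such that $N$ is maximal subject to this lower bound on spacing and to staying inside $\gamma^\circ$. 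Since $\eps$-necks have radii that vary only by a factor of $1+O(\eps)$ over their own radius-scales, the resulting set $\mathcal{I}=\{p_1,\ldots,p_N\}$ has the required separation property, and consecutive pairs are not too far apart either (at most some fixed multiple of $\max\{\eps^{-1},\Gamma\}\max\{H^{-1}(p_i),H^{-1}(p_{i+1})\}$).

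\textbf{Step 2 (Inverse surgery at each $p_i$).} At each $p_i\in\mathcal{I}$, use Proposition~\ref{lemma_neckconnectedsum} in reverse: inside $B_{6\Gamma H^{-1}(p_i)}(p_i)$, isotope the $\eps$-neck at $p_i$ to a pair of standard caps of neck-scale $H^{-1}(p_i)$ connected by a thin string of radius $r_s\ll H^{-1}(p_i)$ along a $\xi(\delta')$-controlled almost-straight curve $\gamma_i^{\textrm{str}}$ obtained from Lemma~\ref{lemma_almost_straight}. These isotopies are supported in disjoint balls, so they can be performed simultaneously. Outside $\bigcup_i B_{6\Gamma H^{-1}(p_i)}(p_i)$ the domain is unchanged, and the complement of the surgery balls in the post-surgery domain consists of $N+1$ connected pieces $P_0,\ldots,P_N$: the two end pieces $P_0,P_N$ each contain one of $\bar{p}_\pm$ and its original cap (either a $(C,\eps)$-cap or a standard cap), and is closed off on the other side by the standard cap introduced at $p_1$ resp.\ $p_N$; each interior piece $P_i$ ($1\leq i\leq N-1$) is a portion of $\eps$-tube with a standard cap at each end coming from the surgeries at $p_i$ and $p_{i+1}$.

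\textbf{Step 3 (Monotonely contract each piece to a marble).} For each interior piece $P_i$, I would construct a monotone two-convex isotopy from $P_i$ to a small round ball $B_i$ centered inside $P_i$, trivial near the two strings $\gamma_i^{\textrm{str}},\gamma_{i+1}^{\textrm{str}}$ that end on $\partial P_i$. The construction mirrors Steps~4--5 of the proof of Proposition~\ref{leaf_contract}: $P_i$ is essentially rotationally symmetric (a chain of $\eps$-necks capped by two standard caps), so writing its profile as a graph of rotation one first uses Proposition~\ref{linear_interpolation} to deform it through $2$-convex domains to a strictly convex rotationally symmetric body $\tilde P_i$, and then applies the convex contraction $\tilde P_i\mapsto t B_i+(1-t)\tilde P_i$ of Proposition~\ref{lemma_convexcontrolled}. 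Both steps can be taken monotone and trivial in small neighborhoods of the string attachment points, so that the strings continue to meet $B_i$ orthogonally along straight rays, as required by the marble tree definition.

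For the end pieces $P_0$ and $P_N$, the same strategy works: the original endpoint cap is either strictly convex (a $(C,\eps)$-cap) or a standard cap, and the adjacent tube is capped off on the other side by a standard cap from surgery, so $P_0$ and $P_N$ are close to strictly convex rotationally symmetric bodies in the same sense. Monotonely convexify via Proposition~\ref{linear_interpolation} and then contract via Proposition~\ref{lemma_convexcontrolled} to a small ball, keeping the unique attached string orthogonal and straight near the ball.

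\textbf{Step 4 (Verifying the marble tree structure).} After these contractions, we are left with a collection of small balls $B_0,\ldots,B_N$, each connected to its neighbors by exactly one string $\gamma_i^{\textrm{str}}$, with no loose ends. Since $K$ is diffeomorphic to a ball, the resulting dual graph is a tree, so the final configuration is (up to the gluing map $\mathcal{G}_{r_s}$) a marble tree in the sense of Definition~\ref{def_marbletree}. By construction, all the non-monotone behavior is confined to the surgery balls $B_{6\Gamma H^{-1}(p_i)}(p_i)$ with $p_i\in\mathcal{I}$, proving the ``moreover'' clause.

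\textbf{Main obstacle.} The main technical difficulty is Step~3: producing a \emph{monotone} two-convex isotopy contracting each piece $P_i$ to a round ball while simultaneously keeping the attached strings straight and orthogonal near the ball. The convex-reduction trick of the proof of Proposition~\ref{leaf_contract} (via Proposition~\ref{linear_interpolation}) handles this provided the pieces are sufficiently close to the standard rotationally symmetric model; this in turn forces $\eps$ to be small and $r_s$ to be chosen small enough that the strings do not interfere with the convexification. Choosing the separation constant $100\max\{\eps^{-1},\Gamma\}$ in Step~1 ensures that the pieces live on scale $\gg H^{-1}$, so that the cylindrical neck approximation is valid throughout each $P_i$ and the deformation of Proposition~\ref{linear_interpolation} can be applied uniformly.
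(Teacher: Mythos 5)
Your overall scheme matches the paper's: both select a well-separated collection $\mathcal{I}$ of $\eps$-neck points along $\gamma$, perform ``inverse surgeries'' at those points using Proposition \ref{lemma_neckconnectedsum} and Lemma \ref{lemma_almost_straight}, contract the resulting connected pieces to round balls, and reassemble via $\mathcal{G}_{r_s}$. However, the way you contract the pieces in Step~3 contains a genuine gap. You invoke Proposition \ref{linear_interpolation} (``writing its profile as a graph of rotation'') to convexify each piece $P_i$, but that proposition is a one-variable statement about surfaces of revolution. The pieces $P_i$ are \emph{not} surfaces of revolution: they are $\eps$-close to capped-off tubes $CN^\pm_r(\gamma,\cdot)$ around a curve $\gamma$ that may bend in space (Definition \ref{cap_off_def}), and they are only locally modelled on cylinders of varying scale. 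There is no single axis of rotation, so there is no profile function $u(x)$ to which Proposition \ref{linear_interpolation} applies. The paper avoids this by first deforming $\hat K^\sharp$ monotonically to a slightly smaller genuine capped-off tube (which lies inside it) and then letting that flow by mean curvature until it becomes strictly convex, after which Proposition \ref{lemma_convexcontrolled} finishes the contraction; no rotational symmetry is needed. In the case where one end of a piece is a $(C,\eps)$-cap, the paper also uses a different argument: convexity of the whole component follows from property (3) of Definition \ref{def_surgery}, not from being ``close to a strictly convex rotationally symmetric body.''

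Two secondary points. First, your proposal keeps the contraction isotopies ``trivial near the strings'' so that the strings stay orthogonal, but that freezes the string attachment points on $\partial P_i$, which is incompatible with $P_i$ shrinking past them. The paper instead lets the attachment points follow $\partial K^\sharp_t$ by normal motion and only applies $\mathcal{G}_{r_s}$ at the end, which is the clean way to move the strings consistently. Second, your final configuration has marbles of different radii (set by the local neck scales $H^{-1}(p_i)$), whereas Definition \ref{def_marbletree} requires a single common marble radius $r_m$; the paper adds an extra isotopy stage shrinking all balls to a common radius $r_{\min}/10$ before reassembling. Neither of these is hard to repair once noticed, but the rotational-symmetry issue in Step~3 is the substantive gap: as written, the convexification step would not compile.
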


\begin{figure}[H]
\includegraphics[width=12cm]{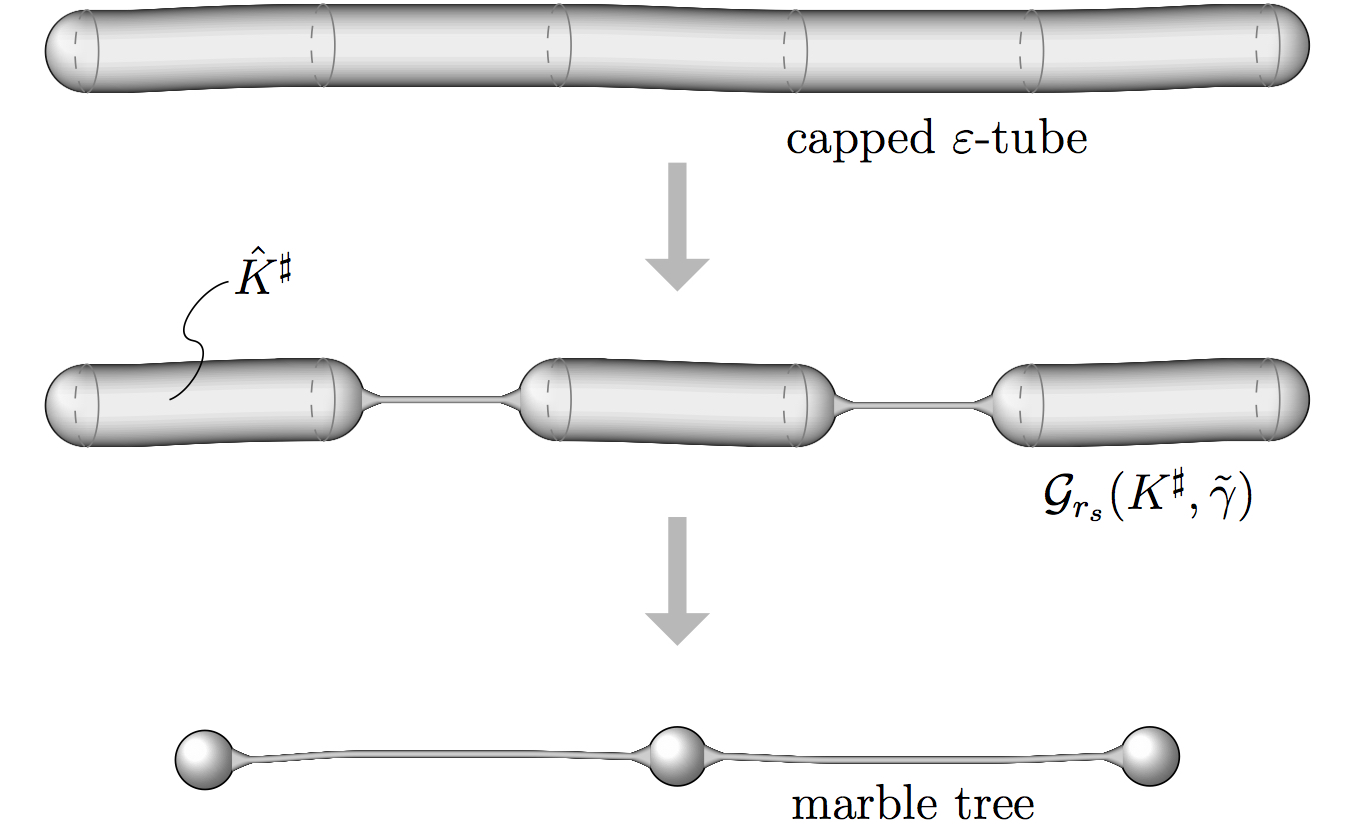}
\caption{Proposition \ref{lemma_neckwithstdcaps} is essentially proved in two steps: First, we use several surgeries and construct the corresponding isotopy as in Proposition \ref{lemma_neckconnectedsum} above. Then, we deform each connected component $\hat{K}^\sharp$ of $K^\sharp$ to a marble and extend the strings connecting them.}
\end{figure}

\begin{proof}
In the following we assume that $\eps$ and $r_s$ are small enough.

Let $p_\pm$ be $\eps$-neck points that are as close as possible to $\bar{p}_\pm$, respectively.
Let $\mathcal{I}\subset \gamma$ be a maximal collection of $\eps$-neck points with $p_\pm\in \mathcal{I}$ such that for any pair $p,q\in \mathcal{I}$ the separation between the points is at least $100\max\{\eps^{-1},\Gamma\}\max\{H^{-1}(p),H^{-1}(q)\}$.

For each $p\in\mathcal{I}$ we replace the $\eps$-neck with center $p$ by a pair of opposing standard caps as in Definition \ref{def_surgery}, which is possible by \cite[Prop. 3.10]{HK_surgery}. Denote the post-surgery domain by $K^\sharp$. Let $\tilde{\gamma}$ be the disjoint union of almost straight curves connecting the opposing standard caps as in Proposition \ref{lemma_almost_straight}. Note that $\tilde{\gamma}$ is Hausdorff close to $\gamma\setminus K^\sharp$. By Proposition \ref{lemma_neckconnectedsum} there exists a suitable isotopy between $K$ and $\mathcal{G}_{r_s}(K^\sharp,\tilde{\gamma})$.

Let $\hat{K}^\sharp$ be a connected component of $K^\sharp$.

If one of the caps of $\hat{K}^\sharp$ is a $(C,\eps)$-cap as in (2a) of Definition \ref{def_C_cap}, then $\hat{K}^\sharp$ is convex by property (3) of the surgery (see Definition \ref{def_surgery}). It can thus be deformed to a round ball by Proposition \ref{lemma_convexcontrolled}.

If both caps of $\hat{K}^\sharp$ are $\eps$-close to standard caps (see Definition \ref{def_stdcap}), then our domain $\hat{K}^\sharp$ is a small perturbation of a capped-off cylinder (see Definition \ref{cap_off_def}), and thus can be deformed monotonically to a slightly smaller capped-off cylinder. Letting the capped-off cylinder flow by mean curvature, it will instantaneously become strictly convex, so it is certainly isotopic to a round ball by Proposition \ref{lemma_convexcontrolled}.   

Let $\{K^{\sharp}_t\}_{t\in[0,1]}$ be the union of the above isotopies between the connected components of $K^{\sharp}$ and balls. Letting $r_{\min}$ be the smallest radius among the radii of the balls of $K^{\sharp}$, let $\{K^{\sharp}_t\}_{t\in[1,2]}$ be an isotopy that concatenates smoothly at $t=1$ and shrinks all balls further to balls of radius $r_{\min}/10$. Let $\{\tilde{\gamma}_t\}_{t\in[0,2]}$ be the family of curves which follows $K^{\sharp}_t$ by normal motion starting at $\tilde{\gamma}_0=\tilde{\gamma}$. Then $\{\mathcal{G}_{r_s}(K^\sharp_t,\tilde{\gamma}_t)\}_{t\in [0,2]}$ provides the last step of the asserted isotopy. 
\end{proof}

\section{Mean curvature flow with surgery}\label{sec_mcfsurgery}

In this section, we recall the relevant facts about mean curvature flow with surgery from \cite{HK_surgery} that we need for the proof of the main theorem. We start with the more flexible notion of $(\alpha,\delta)$-flows.

\begin{definition}[$(\alpha,\delta)$-flow {\cite[Def. 1.1]{HK_surgery}}]\label{def_alphadelta}
An \emph{$(\alpha,\delta)$-flow} $\K$ is a collection of finitely many smooth $\al$-noncollapsed mean curvature flows $\{K_t^i\subseteq \R^{n+1}\}_{t\in[t_{i-1},t_{i}]}$ ($i=1,\ldots,\ell$; $t_0<\ldots< t_\ell$),
such that:
\begin{enumerate}[\hspace{3mm}(1)]
\item for each $i=1,\ldots,\ell-1$, the final time slices of some collection of disjoint strong $\delta$-necks
 are replaced by pairs of standard caps as described in Definition \ref{def_surgery},
 giving a domain $K^\sharp_{t_{i}}\subseteq K^{i}_{t_{i}}=:K^-_{t_{i}}$.
\item the initial time slice of the next flow, $K^{i+1}_{t_{i}}=:K^+_{t_{i}}$, is obtained from $K^\sharp_{t_{i}}$ by discarding some connected components.
\item all surgeries are at comparable scales, i.e. there exists a radius $r_\sharp=r_\sharp(\K)>0$, such that all necks in (1) have radius $r\in[\tfrac{1}{2}r_\sharp,2 r_\sharp]$.
\end{enumerate}
\end{definition}

\begin{remark} To avoid confusion, we emphasize that the word `some' allows for the empty set,
i.e. some of the inclusions $K_{t_i}^+\subseteq K_{t_i}^\sharp\subseteq K_{t_i}^-$ could actually be equalities. In other words, there can be some times $t_i$ where effectively only one of the steps (1) or (2) is carried out.
Also, the flow can become extinct, i.e. we allow the possibility that $K^{i+1}_{t_{i}}=\emptyset$.
\end{remark}

A mean curvature flow with surgery is an $(\al,\de)$-flow subject to additional conditions. Besides the neck-quality $\delta>0$, in a flow with surgery we have three curvature-scales $H_{\textrm{trig}}> H_{\textrm{neck}} > H_{\textrm{th}} > 1$, called the trigger-, neck- and thick-curvature. The flow is defined for every smooth compact $2$-convex initial condition.
The following definition quantifies the relevant parameters of the initial domain.

\begin{definition}[{Controlled initial condition \cite[Def. 1.15]{HK_surgery}}]\label{def_initialdata}
Let $\Balpha=(\al,\beta,\gamma)\in(0,n-1)\times (0,\tfrac{1}{n-1})\times (0,\infty)$.
A smooth compact domain $K_0\subset \R^N$ is called an \emph{$\Balpha$-controlled initial condition},
if it is $\alpha$-noncollapsed and satisfies the inequalities $\lambda_1+\lambda_2\geq \beta H$ and $H\leq \gamma$.
\end{definition}

The definition of mean curvature flow with surgery is as follows.

\begin{definition}[Mean curvature flow with surgery {\cite[Def. 1.17]{HK_surgery}}]\label{def_MCF_surgery}
An \emph{$(\Balpha,\de,\mathbb{H})$-flow},  where $\mathbb{H}=(H_{\textrm{th}},H_{\textrm{neck}},H_{\textrm{trig}})$, is an $(\al,\de)$-flow $\{K_t\subset \R^N\}_{t\geq 0}$ (see Definition \ref{def_alphadelta}) with $\lambda_1+\lambda_2\geq \beta H$, and
with $\Balpha=(\al,\beta,\gamma)$-controlled initial condition $K_0\subset \R^N$ (see Definition \ref{def_initialdata}) such that
\begin{enumerate}[\hspace{3mm}(1)]
\item $H\leq H_{\textrm{trig}}$ everywhere, and
surgery and/or discarding occurs precisely at times $t$ when $H=H_{\textrm{trig}}$ somewhere.
\item The collection of necks in item (1) of Definition \ref{def_alphadelta} is a minimal collection of solid $\de$-necks of curvature $H_{\textrm{neck}}$ which
separate the set $\{H=H_{\textrm{trig}}\}$ from $\{H\leq H_{\textrm{th}}\}$ in the domain
$K_t^-$.
\item $K_t^+$ is obtained from $K_t^\sharp$ by discarding precisely those connected components with $H>H_{\textrm{th}}$ everywhere.
In particular, of each pair of facing surgery caps precisely one is discarded. 
\item If a strong $\delta$-neck from (2) also is a strong $\hat{\delta}$-neck for some $\hat{\delta}<\delta$, then property (4) of Definition \ref{def_surgery} also holds with $\hat{\delta}$ instead of $\delta$.
\end{enumerate}
\end{definition}

\begin{remark}\label{rem_fix_std_cap}
Strictly speaking, a flow with surgery also depends on the choice of a suitable standard cap (see Definition \ref{def_stdcap}) and a suitable choice of the cap separation parameter $\Gamma$, see \cite[Convention 2.11]{HK_surgery}.
\end{remark}

Having discussed the precise definition of mean curvature flow with surgery, we can now recall the existence theorem.

\begin{theorem}[{Existence of MCF with surgery \cite[Thm. 1.21]{HK_surgery}}]\label{thm_main_existence}
There are constants $\ol{\de}=\ol{\de}(\Balpha)>0$ and $\Theta(\de)=\Theta(\Balpha,\de)<\infty$ ($\delta\leq\bar{\de}$) with the following significance.
If $\de\leq\bar{\de}$ and $\mathbb{H}=(H_{\textrm{trig}},H_{\textrm{neck}},H_{\textrm{th}})$ are positive numbers with
${H_{\textrm{trig}}}/{H_{\textrm{neck}}},{H_{\textrm{neck}}}/{H_{\textrm{th}}},H_{\textrm{neck}}\geq \Theta(\de)$,
then there exists an $(\Balpha,\de,\mathbb{H})$-flow $\{K_t\}_{t\in[0,\infty)}$ for every $\Balpha$-controlled initial condition $K_0$.  
\end{theorem}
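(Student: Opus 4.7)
The strategy I would pursue is an inductive/iterative construction: run smooth mean curvature flow until the supremum of $H$ hits $H_{\textrm{trig}}$, at that moment perform neck-surgery and discard the appropriate components, then restart smooth flow. The core of the proof is to verify that this loop is well-defined, that the quantitative conditions $\alpha$-noncollapsing and $\beta$-uniform $2$-convexity are preserved (possibly with slight losses that can be absorbed by the parameter hierarchy), and that a finite number of surgeries suffice before extinction.

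The first package of ingredients I would establish is the \emph{a priori structure theory} for smooth $\Balpha$-controlled flow. By the maximum principle applied to the symmetric function $\lambda_1+\lambda_2 - \beta H$ together with the estimates of Huisken--Sinestrari, $\beta$-uniform $2$-convexity is preserved under smooth flow. Noncollapsing is preserved via the Andrews/Haslhofer--Kleiner reflection argument. The key output is a \emph{canonical neighborhood theorem}: for $\delta$ small and $H_{\textrm{neck}}$ large, every point with $H\geq H_{\textrm{neck}}/2$ is either contained in a convex component of controlled geometry, or lies in a strong $\delta$-neck, or lies within a controlled distance from the tip of a region modelled on a bowl soliton/standard cap. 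This is proved by a compactness argument: otherwise one extracts a sequence of rescalings converging to an ancient $2$-convex $\alpha$-noncollapsed smooth solution, which by the classification of such solutions must be the round cylinder, sphere, or bowl soliton.

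The second package is the \emph{surgery step}. Given the canonical neighborhoods, in the time slice $K^-_{t_i}$ where $\sup H=H_{\textrm{trig}}$ I would find, using a Morse-theoretic / foliation-by-necks argument, a minimal finite collection of disjoint strong $\delta$-necks of curvature exactly $H_{\textrm{neck}}$ separating $\{H=H_{\textrm{trig}}\}$ from $\{H\leq H_{\textrm{th}}\}$. I would then define $K^\sharp_{t_i}$ by replacing each such neck with a pair of standard caps of the profile fixed in Remark \ref{rem_fix_std_cap}, verifying properties (1)--(4) of Definition \ref{def_surgery}. The crucial point is to check that this replacement preserves $\alpha$-noncollapsing (with a tiny loss controlled by $\delta$, recovered by the gap between $H_{\textrm{neck}}$ and $H_{\textrm{trig}}$) and $\beta$-uniform $2$-convexity; both reduce to explicit computations on the standard cap together with the proximity to a round cylinder ensured by the $\delta$-neck hypothesis. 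Then $K^+_{t_i}$ is obtained by discarding every connected component on which $H>H_{\textrm{th}}$ everywhere; such components are either convex spheres or capped-off chains of necks, which topologically terminate the flow on that component.

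The hardest step is showing that the procedure does not loop infinitely and that all estimates remain uniform in time. For this I would prove a \emph{surgery gap estimate}: immediately after surgery the maximum of $H$ drops from $H_{\textrm{trig}}$ to at most $2H_{\textrm{neck}}$, so smooth flow must run for a definite amount of time before the next surgery. Combined with the monotonicity of enclosed volume (which decreases by a definite amount at each surgery, since the piece removed inside each neck has volume $\gtrsim r_\sharp^{n+1}$), this bounds the total number of surgeries on any finite interval and forces extinction in finite time. The main obstacle throughout is calibrating the three scales $H_{\textrm{th}}\ll H_{\textrm{neck}}\ll H_{\textrm{trig}}$ and the neck-quality $\delta$ so that the canonical neighborhood theorem, the existence of separating necks, and the preservation of $\alpha,\beta$ under surgery all hold simultaneously; this is the source of the constants $\bar\delta(\Balpha)$ and $\Theta(\Balpha,\delta)$ and is what makes the three hierarchical inequalities in the statement necessary.
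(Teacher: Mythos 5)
This theorem is quoted verbatim from \cite[Thm.~1.21]{HK_surgery}; the present paper does not contain a proof of it, so there is no in-paper argument to compare against. Your outline is a reasonable high-level account of the Huisken--Sinestrari / Haslhofer--Kleiner strategy that underlies the cited result: preserve $\alpha$-noncollapsing and $\beta$-uniform $2$-convexity through smooth flow (two-point maximum principle for the former, Hamilton/Huisken--Sinestrari maximum principle for the latter), prove a canonical neighborhood theorem by compactness and blow-up, find separating $\delta$-necks at trigger times and replace them by standard caps, verify that the controlled quantities survive surgery, and show finiteness of surgeries via a volume-drop argument and a curvature gap after surgery.

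Two points worth sharpening. First, the compactness step in the canonical neighborhood theorem does not require a full \emph{classification} of ancient $\alpha$-noncollapsed $2$-convex solutions (which was not available in full generality at the time of \cite{HK_surgery}); it suffices to establish a \emph{structure theorem} for such ancient solutions — namely that they are either compact and convex of controlled geometry, or contain $\eps$-necks outside a compact cap region — together with the corresponding structure of the standard solution. Theorem \ref{thm_can_nbd} is in fact phrased exactly in these terms (alternatives (a) and (b) are closeness to such a model, not to a specific named soliton). Second, the existence of a minimal separating collection of disjoint $\delta$-necks of curvature $H_{\textrm{neck}}$ is obtained in \cite{HK_surgery} rather concretely from the canonical neighborhood property plus a covering argument, not by a literal Morse-theoretic foliation, though the idea is in the same spirit. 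Neither of these affects the soundness of the overall scheme; both are refinements of phrasing that bring your sketch in line with what is actually proved in the reference.
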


This existence theorem allows us to evolve any smooth compact $2$-convex initial domain $K_0\subset\mathbb{R}^{n+1}$. By comparison with spheres, the flow of course always becomes extinct in finite time, i.e. there is some $T=T(K_0)<\infty$ such that $K_t=\emptyset$ for all $t\geq T$.  The existence theorem is accompanied by the canonical neighborhood theorem, which gives a precise description of the regions of high curvature.

\begin{theorem}[{Canonical neighborhood theorem \cite[Thm. 1.22]{HK_surgery}}]\label{thm_can_nbd}
For all $\eps>0$ and all $\Balpha$, there exist $\ol{\de}=\ol{\de}(\Balpha)>0$, $H_{\textrm{can}}(\eps)=H_{\textrm{can}}(\Balpha,\eps)<\infty$ and $\Theta_\eps(\delta)=\Theta_\eps(\Balpha,\delta)<\infty$ ($\delta\leq\bar{\de}$) with the following significance.
If $\de\leq\ol{\de}$, and $\K$ is an $(\Balpha,\de,\mathbb{H})$-flow with ${H_{\textrm{trig}}}/{H_{\textrm{neck}}},{H_{\textrm{neck}}}/{H_{\textrm{th}}}\geq \Theta_\eps(\de)$,
then any $(p,t)\in\D \K$ with $H(p,t)\geq H_{\textrm{can}}(\eps)$ is $\eps$-close to either
(a) a $\beta$-uniformly $2$-convex ancient $\al$-noncollapsed flow,
or (b) the evolution of a standard cap preceded by the evolution of a round cylinder.
\end{theorem}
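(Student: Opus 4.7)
The plan is to argue by contradiction and compactness, in the spirit of Perelman's canonical neighborhood theorem for Ricci flow. Fix $\eps>0$, $\Balpha$, and $\delta\leq\bar\delta$. Suppose no $H_{\textrm{can}}(\eps)$ works: then for some $\Theta_\eps(\delta)$ tending to the required threshold, there exist $(\Balpha,\delta,\mathbb{H}_j)$-flows $\K_j$ with $H_{\textrm{trig}}^{(j)}/H_{\textrm{neck}}^{(j)},\, H_{\textrm{neck}}^{(j)}/H_{\textrm{th}}^{(j)}\geq \Theta_\eps(\delta)$ and boundary space-time points $(p_j,t_j)\in\partial\K_j$ with $Q_j:=H(p_j,t_j)\to\infty$ such that $(p_j,t_j)$ fails to be $\eps$-close in the appropriate topology to any model of type (a) or (b).

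Rescale by $Q_j$ and translate $(p_j,t_j)$ to the space-time origin, obtaining flows $\tilde{\K}_j=Q_j\cdot(\K_j-(p_j,t_j))$. Both $\alpha$-noncollapsing and $\beta$-uniform $2$-convexity are scale-invariant, and are preserved under the surgery (by definition of $(\Balpha,\delta,\mathbb{H})$-flow together with condition (3) of Definition \ref{def_surgery}), so they pass to every subsequential limit. Now separate into two cases based on the rescaled distance from $(p_j,t_j)$ to the nearest surgery point. Let $d_j$ denote this parabolic distance in $\tilde{\K}_j$.

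\textbf{Case A:} $d_j\to\infty$. Then on any fixed parabolic ball the rescaled flows are smooth $\alpha$-noncollapsed $\beta$-uniformly $2$-convex mean curvature flows. Apply the Haslhofer--Kleiner compactness theorem for noncollapsed mean curvature flows to extract a subsequential smooth limit, which is a $\beta$-uniformly $2$-convex $\alpha$-noncollapsed ancient flow through the origin. This is exactly a model of type (a), contradicting the choice of $(p_j,t_j)$.

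\textbf{Case B:} $d_j$ stays bounded. Then $(p_j,t_j)$ lies in the parabolic neighborhood of a surgery at some time $s_j\leq t_j$. Because a surgery occurs there, the pre-surgery slice contains a strong $\delta$-neck of radius $r_j\sim 1/H_{\textrm{neck}}^{(j)}$. Since $Q_j\geq H_{\textrm{can}}(\eps)$ and $H_{\textrm{neck}}^{(j)}/H_{\textrm{th}}^{(j)}\to\infty$, the ratios force $Q_j r_j$ to stay bounded from above and below along a subsequence, so after rescaling the surgery happens at unit scale. Condition (4) of Definition \ref{def_MCF_surgery} then yields $\delta'$-closeness to a standard cap after surgery with $\delta'$ as small as we wish, and $\delta$-closeness to a round cylinder before surgery (from strong $\delta$-neck). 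Short-time smooth dependence of mean curvature flow on initial data together with the Haslhofer--Kleiner compactness argument upgrade this into subsequential smooth convergence of $\tilde{\K}_j$ to the evolution of a standard cap preceded in time by the evolution of a round cylinder — exactly model (b) — again a contradiction.

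The main obstacle is Case B, since one must (i) verify that the pre-surgery strong $\delta$-neck persists smoothly backward in time through a definite parabolic window (which is precisely why the flows in Definition \ref{def_MCF_surgery} use \emph{strong} $\delta$-necks and why condition (3) of the Definition \ref{def_alphadelta}, fixing the surgery scale, matters), and (ii) rule out interference from further surgeries in the same rescaled window. Point (i) is handled by the strong-neck definition, which gives a whole backward parabolic neighborhood on which the rescaled flow is smooth and cylindrical; point (ii) is handled by the minimality clause in item (2) of Definition \ref{def_MCF_surgery} together with the separation $H_{\textrm{neck}}^{(j)}/H_{\textrm{th}}^{(j)}\to\infty$, which ensures that distinct simultaneous surgery necks are far apart in the rescaled picture. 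Once these two points are secured, the compactness theorem applies and the dichotomy (a)/(b) is complete.
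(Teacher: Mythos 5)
This statement is not proved in the paper under review; it is quoted verbatim as Theorem 1.22 of Haslhofer--Kleiner \cite{HK_surgery} and used as a black box. There is therefore no ``paper's own proof'' to compare your argument against. Judged on its own terms, your sketch has the right skeleton --- argue by contradiction, rescale by the curvature at the bad point, and split into cases according to whether the surgeries stay at bounded rescaled parabolic distance (giving model (b)) or escape to infinity (giving the ancient-solution model (a)) --- and this is indeed the overall shape of the argument in \cite{HK_surgery}.

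The gap is that you invoke ``the Haslhofer--Kleiner compactness theorem'' as if it could be applied for free, whereas the entire technical content of the canonical neighborhood theorem lies in earning the hypotheses of any such compactness statement. In Case A the rescaled flow has curvature $1$ at the base point, but nothing you have said prevents the curvature from blowing up at rescaled distance $\tfrac12$; the crucial inputs here are the \emph{local} and \emph{global curvature estimates} (Theorems 1.8 and 1.12 of \cite{HK_surgery}), a nontrivial bootstrap showing that curvature control at a single point of an $(\alpha,\delta)$-flow propagates to a full backward parabolic neighborhood of a definite size. Without these, one cannot extract any subsequential limit. The same estimates are what justify, in Case B, your claim that $Q_j r_j$ stays bounded above and below --- otherwise, as you set it up, the only a priori bound is $Q_j r_j \lesssim H_{\textrm{trig}}^{(j)}/H_{\textrm{neck}}^{(j)}$, which tends to infinity. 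You also do not mention the \emph{convexity estimate} (Theorem 1.10 of \cite{HK_surgery}), which is what forces the Case A limit to be convex and hence amenable to the classification in \cite[Sec.~3]{HK_surgery}. In short, the Perelman-style scaffolding is correct, but the curvature and convexity estimates, which are the actual load-bearing elements, are missing from the proposal.
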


\begin{remark}
The structure of $\beta$-uniformly $2$-convex ancient $\al$-noncollapsed flows and the standard solution is described in {\cite[Sec. 3]{HK_surgery}}.
\end{remark}

In \cite{HK_surgery} the canonical neighborhood theorem (Theorem \ref{thm_can_nbd}) and the theorems about the structure of ancient solutions and the standard solution {\cite[Sec. 3]{HK_surgery}} were combined to obtain topological and geometric information about the discarded components.

\begin{corollary}[{Discarded components \cite[Cor. 1.25]{HK_surgery}}]\label{cor_discarded}
For $\eps>0$ small enough, for any $(\Balpha,\de,\mathbb{H})$-flow with ${H_{\textrm{trig}}}/{H_{\textrm{neck}}},{H_{\textrm{neck}}}/{H_{\textrm{th}}}\geq \Theta_\eps(\de)$ ($\de\leq\bar{\de}$) and
$H_\textrm{th}\geq H_{\textrm{can}}(\eps)$, where $\Theta_\eps(\de)$, $\bar{\de}$ and $H_{\textrm{can}}(\eps)$ are from Theorem \ref{thm_can_nbd}, all discarded components are diffeomorphic to $\bar{D}^N$ or $\bar{D}^{N-1}\times S^1$. Moreover, the components that are diffeomorphic to $\bar{D}^N$ are either (a) convex or (b) a capped $\eps$-tube (see Definition \ref{def_capped_off_chain}).
\end{corollary}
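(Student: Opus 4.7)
The plan is to apply the canonical neighborhood theorem (Theorem \ref{thm_can_nbd}) pointwise on each discarded component, then perform a global topological and geometric assembly. By definition, any discarded component satisfies $H > H_{\textrm{th}}$ everywhere; since we assume $H_{\textrm{th}}\geq H_{\textrm{can}}(\eps)$, every point of such a component admits an $\eps$-canonical neighborhood.

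First I would classify the possible canonical neighborhoods. Combining Theorem \ref{thm_can_nbd} with the structure theorems for $\beta$-uniformly $2$-convex ancient $\alpha$-noncollapsed flows and for the standard solution (from \cite[Sec.~3]{HK_surgery}), every such neighborhood is $\eps$-close to one of three local models: (i) a round shrinking cylinder, i.e.\ the point is the center of an $\eps$-neck; (ii) a strictly convex cap region of an ancient flow — a shrinking sphere, a bowl soliton tip, or the tip of an ancient oval — each of which is a $(C,\eps)$-cap in the sense of Definition \ref{def_C_cap}; (iii) a standard-cap region from the evolution of a surgery cap preceded by a cylinder. Thus every point of the component lies in one of three sets: $\eps$-necks $N_\eps$, convex caps $C^{\textrm{conv}}_\eps$, or standard caps $C^{\textrm{st}}_\eps$.

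Next I would perform the topological assembly. The neck points $N_\eps$ admit locally, up to small error, a projection to a one-dimensional arc whose fibers are almost-round $(n{-}1)$-spheres; each cap region is diffeomorphic to $\bar{D}^{n+1}$ with one $(n{-}1)$-sphere boundary component facing the neighboring neck region. A straightforward graph-theoretic argument on a connected component — each neck segment has exactly two boundary $(n{-}1)$-spheres, each cap has exactly one — shows that a connected discarded component with no cap points must be a closed chain of necks, topologically $\bar{D}^{n}\times S^1$, while a component containing at least one cap point must contain exactly two, and is thus topologically $\bar{D}^{n+1}$.

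For the geometric refinement of the $\bar{D}^{n+1}$ case, if the component contains no $\eps$-neck point at all, then it is globally $\eps$-close to a compact strictly convex ancient flow (shrinking sphere or ancient oval), hence itself convex for $\eps$ small. Otherwise it has two caps, each of type (ii) or (iii), joined by a nonempty chain of $\eps$-necks; this is precisely the content of Definition \ref{def_capped_off_chain} once a curve $\gamma$ is chosen. The main obstacle is constructing $\gamma$ and verifying its control estimates: one picks the axis of each $\eps$-neck (well-defined up to $\eps$-error by the closeness to a cylinder), interpolates smoothly along overlapping necks, and terminates orthogonally at the cap tips. Because consecutive $\eps$-necks overlap at scale $O(\eps^{-1}H^{-1})$, the neck axes at nearby points agree up to angular error $O(\eps)$ and curvature $O(\eps H)$, which yields the $\eps^{-2}r$-controlledness required in condition (2) of Definition \ref{def_capped_off_chain}. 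This step, together with showing that the endpoint cap neighborhoods have the required $(C,\eps)$-cap or standard-cap form (which follows from the classification in step one), completes the proof.
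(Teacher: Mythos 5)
Your proposal is essentially the argument the paper relies on: the paper does not reprove this statement but imports it from \cite[Cor.~1.25]{HK_surgery}, whose proof (as the surrounding text and Remark after the Corollary indicate) combines the canonical neighborhood theorem with the structure theory of $\beta$-uniformly $2$-convex ancient $\alpha$-noncollapsed flows and of the standard solution, exactly as you do, and then assembles the local models into closed neck chains ($\bar D^{N-1}\times S^1$) or doubly capped tubes ($\bar D^N$), with the convex/capped-$\eps$-tube dichotomy and the construction of the controlled core curve $\gamma$ handled as in your final two steps. The reconstruction is correct and takes the same route; the only caveat is that the intermediate bound you quote for the curvature of $\gamma$ should come from the $O(\eps)$ variation of the neck axes over the overlap scale $\eps^{-1}r$, giving $O(\eps^{2}r^{-1})$ as required for $\eps^{-2}r$-controlledness, rather than the weaker $O(\eps H)$ you state in passing.
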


\begin{remark}
The last part of Corollary \ref{cor_discarded} was not explicitly stated in \cite{HK_surgery}, but is contained in the proof given there.
\end{remark}

\section{Proof of the main theorem}\label{sec_proofmain}

\begin{proof}[Proof of the main theorem]
Let $K_0\subset \R^{n+1}$ be a $2$-convex domain diffeomorphic to a ball. By compactness $K_0$ is an $\Balpha$-controlled initial condition for some values $\alpha,\beta,\gamma$ (see Definition \ref{def_initialdata}).
Fix a suitable standard cap $K^{\textrm{st}}=K^{\textrm{st}}(\alpha,\beta)$ and a suitable cap separation parameter $\Gamma$ (see Remark \ref{rem_fix_std_cap}).
Let $\eps>0$ be small enough such that Corollary \ref{cor_discarded}, Proposition \ref{lemma_neckwithstdcaps} and Lemma \ref{lemma_rearrange_curves} apply.
Choose curvature scales $\mathbb{H}$ with
${H_{\textrm{trig}}}/{H_{\textrm{neck}}},{H_{\textrm{neck}}}/{H_{\textrm{th}}},H_{\textrm{neck}}\geq \max\{ \Theta_\eps(\de),\Theta(\delta)\}$ ($\delta\leq\bar{\delta}$)
and $H_\textrm{th}\geq H_{\textrm{can}}(\eps)$, where $\bar{\delta}\leq \eps$ is small enough that both the existence result (Theorem \ref{thm_main_existence}) and the canonical neighborhood property (Theorem \ref{thm_can_nbd}) at any point with $H(p,t)\geq H_{\textrm{can}}(\eps)$ are applicable. Choose the control parameters $(\mathbb{A},b)$ flexible enough and the marble radius $r_m$ and string radius $r_s$ small enough, such that the argument below works.

Consider the evolution $\{K_t\}$ by mean curvature flow with surgery given by Theorem \ref{thm_main_existence} with initial condition $K_0$. Let $0<t_1< \ldots < t_{\ell}$ be the times where there is some surgery and/or discarding (see Definition \ref{def_MCF_surgery}). By the definition of a flow with surgery at each $t_i$ there are finitely many (possibly zero) $\delta$-necks with center $p_i^j$ and radius $r_{\textrm{neck}}=(n-1)H^{-1}_{\textrm{neck}}$ that are replaced by a pair of standard caps. Let $B_i^j:=B_{10\Gamma r_\textrm{neck}}(p_i^j)$, and observe that these balls are pairwise disjoint.\footnote{In fact, they are far away from each other, see \cite[Prop. 2.5]{HK_surgery}.} Similarly, for each discarded component $C_i^j$ Proposition \ref{lemma_neckwithstdcaps} gives a finite collection of $\eps$-neck points, whose centers and radii we denote by $p_{i}^{jk}$ and $r_{i}^{jk}$. Let $B_i^{jk}:=B_{10\Gamma r_i^{jk}}(p_i^{jk})$. The isotopy which we will construct will be monotone outside the set
\begin{equation}\label{exceptional_set}
X:=\bigcup_{i,j}B_i^j\cup \bigcup_{i,j,k}B_i^{jk}\, .
\end{equation}
Note that the balls in \eqref{exceptional_set} are pairwise disjoint.

Let $\A_i$ be the assertion that each connected component of $K^i:=K_{t_{i}}^-$ is isotopic via 2-convex embeddings to a marble tree, with an isotopy which is monotone outside $X$.

\begin{claim}\label{claim_inductionhyp}
All discarded components $C_i^j$ are isotopic via 2-convex embeddings to a marble tree, with an isotopy which is monotone outside $\bigcup_k B_i^{jk}$. In particular, $\A_\ell$ holds.
\end{claim}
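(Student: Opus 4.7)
The plan is to treat the two types of discarded components from Corollary \ref{cor_discarded} separately, and then deduce $\A_\ell$ by reassembling the components of $K^\ell$ from their discarded pieces via Proposition \ref{lemma_neckconnectedsum}. Because $K_0$ is diffeomorphic to a ball and neither neck surgery nor discarding can create handles, every discarded component $C_i^j$ is diffeomorphic to $\bar{D}^{n+1}$ and, by Corollary \ref{cor_discarded}, is either (a) convex or (b) a capped $\eps$-tube. In case (a), Proposition \ref{lemma_convexcontrolled} provides a monotone isotopy through $2$-convex (in fact convex) domains to a round ball, which is a one-marble marble tree; for convex discards no $\eps$-neck points are selected, so the collection $\bigcup_k B_i^{jk}$ is empty and the monotonicity condition in the claim is automatic. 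In case (b), Proposition \ref{lemma_neckwithstdcaps} directly delivers an isotopy through $2$-convex domains to a marble tree, monotone outside $\bigcup_k B_i^{jk}$. After one further small $2$-convex isotopy on each resulting marble tree to synchronize the marble radius $r_m$ and string radius $r_s$ across components, the first assertion of the claim is established.

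For the ``in particular'' statement, finite-time extinction forces $K^+_{t_\ell}=\emptyset$, so every connected component of $K^\sharp_{t_\ell}$ is already a discarded component $C_\ell^j$. Fix a connected component $\Omega$ of $K^\ell=K^-_{t_\ell}$ and let $\{p_\ell^{j_1},\dots,p_\ell^{j_k}\}$ be the centers of the $\delta$-necks cut from $\Omega$ at time $t_\ell$; these necks separate $\Omega$ into discarded components $C_\ell^{j_1},\dots,C_\ell^{j_{k+1}}$ (the count is $k+1$ because $\Omega$ is simply connected, so each neck is separating). Applying Proposition \ref{lemma_neckconnectedsum} at each neck, together with Lemma \ref{lemma_almost_straight} to produce the connecting curves, I isotope $\Omega$ via $2$-convex embeddings to a configuration $\mathcal{G}_{r_s}(\bigsqcup_i C_\ell^{j_i},\gamma)$ in which each string is supported inside the corresponding ball $B_\ell^j\subseteq X$. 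I then splice in the isotopies from the first part to replace each $C_\ell^{j_i}$ by a marble tree; since $\Omega$ is simply connected the resulting graph of marbles and strings is a tree, so the output is itself a marble tree in the sense of Definition \ref{def_marbletree}. The concatenated isotopy is trivial outside $\bigcup_j B_\ell^j$ on the neck-gluing steps and monotone outside $\bigcup_{j,k} B_\ell^{jk}$ on the discarded-component steps, hence monotone outside $X$, which establishes $\A_\ell$.

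The only real subtlety is bookkeeping: the marble and string radii produced by Propositions \ref{lemma_convexcontrolled} and \ref{lemma_neckwithstdcaps} differ a priori between components, and must be homogenized before they can be assembled into a single marble tree. This is harmless given the freedom to adjust the control parameters $(\mathbb{A},b)$ and the rigid-motion-equivariant, explicit description of $\mathcal{G}_{r_s}$ on round configurations in Theorem \ref{thm_attachstrings}; the choices of $r_m$ and $r_s$ made at the very beginning of the proof of the main theorem already ensure that both are small enough for every step above to apply.
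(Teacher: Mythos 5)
Your first part (treating the two types of discarded components via Corollary \ref{cor_discarded}, Proposition \ref{lemma_convexcontrolled}, and Proposition \ref{lemma_neckwithstdcaps}) matches the paper, and the observation that the topological assumption on $K_0$ rules out $\bar{D}^{n}\times S^1$ is exactly right. The parenthetical ``synchronization'' step is harmless but unnecessary: $\A_i$ only asks that each component be isotopic to \emph{some} marble tree, and Theorem \ref{thm_marble_tree} handles arbitrary marble/string radii.

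For the ``in particular'' statement you miss the short-cut the paper uses, and your replacement is both overcomplicated and structurally inconsistent with the definitions. Because $K^+_{t_\ell}=\emptyset$, Definition \ref{def_MCF_surgery}(3) (``of each pair of facing surgery caps precisely one is discarded'') immediately forces there to be \emph{no} neck surgeries at $t_\ell$: a surgery neck always leaves behind at least one non-discarded cap, which would make $K^+_{t_\ell}$ nonempty. Hence $K^\ell=K^-_{t_\ell}=K^\sharp_{t_\ell}$ and every connected component of $K^\ell$ is itself a discarded component, so $\A_\ell$ follows directly from the first part — no reassembly is needed. Your argument instead postulates $k\geq 1$ surgery necks cut from a component $\Omega$ at time $t_\ell$ with \emph{all} resulting pieces discarded, which is precisely the configuration that Definition \ref{def_MCF_surgery}(3) excludes. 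So the $k\geq 1$ branch of your argument is vacuous, and invoking Proposition \ref{lemma_neckconnectedsum} and Lemma \ref{lemma_almost_straight} here simply re-does, in a context where it cannot arise, the gluing that properly belongs to the inductive step (Claim \ref{claim_inductionstep}). The net result is not a contradiction in your proof, but it does reveal a missed observation about the final surgery time, and it substitutes a heavy, redundant argument for a one-line remark.
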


\begin{proof}[{Proof of Claim \ref{claim_inductionhyp}}]
Our topological assumption on $K_0$ together with the nature of the surgery process (see Definition \ref{def_MCF_surgery}) implies that all discarded components are diffeomorphic to balls. Thus, by Corollary \ref{cor_discarded}, each discarded component is either (a) convex or (b) a capped $\eps$-tube. Using Proposition \ref{lemma_convexcontrolled} in case (a), respectively Proposition \ref{lemma_neckwithstdcaps} in case (b), we can find a suitable isotopy to a marble tree.

Since $K_{t_{\ell}}^+=\emptyset$, we see that at time $t_{\ell}$ there was only discarding, and no replacement of necks by caps (c.f.  Definition \ref{def_MCF_surgery}). Thus, all connected components of $K^\ell=K_{t_\ell}^-$ get discarded, and $\A_\ell$ holds.
\end{proof}

\begin{claim}\label{claim_inductionstep}
If $0<i<\ell$ and $\A_{i+1}$ holds, so does $\A_i$.
\end{claim}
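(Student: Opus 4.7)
The plan is to reverse-engineer the evolution on the interval $[t_i,t_{i+1}]$ in three layers: first undo the surgeries of $K^i$ using Proposition \ref{lemma_neckconnectedsum}, then isotope each post-surgery piece to a marble tree using either smooth MCF plus $\A_{i+1}$ or Claim \ref{claim_inductionhyp}, and finally transport these isotopies through the gluing map $\mathcal{G}_{r_s}$ (Theorem \ref{thm_attachstrings}) to obtain a 2-convex isotopy of the full pre-surgery domain. Throughout, I will keep track of monotonicity outside $X$.

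First I would pick, for each $j$ with a surgery at time $t_i$, a $\xi(\delta'(\delta))$-controlled almost straight curve $\gamma_i^j$ joining the opposing post-surgery caps, as furnished by Lemma \ref{lemma_almost_straight}. Setting $\Gamma_i=\bigsqcup_j \gamma_i^j$, Proposition \ref{lemma_neckconnectedsum} (applied in parallel in the pairwise disjoint balls $B_i^j$) yields a 2-convex isotopy from $K^i$ to $\mathcal{G}_{r_s}(K_{t_i}^\sharp,\Gamma_i)$ which is trivial outside $\bigcup_j B_i^j\subseteq X$, hence a fortiori monotone outside $X$.

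Next I would assemble, for every connected component $C$ of $K_{t_i}^\sharp$, a 2-convex isotopy to a marble tree $T_C$, monotone outside $X$. If $C$ survives until $t_{i+1}$, then smooth MCF on $[t_i,t_{i+1}]$ provides a monotone 2-convex isotopy from $C$ to the corresponding component of $K^{i+1}$, which is then isotoped to $T_C$ using the inductive hypothesis $\A_{i+1}$. If $C$ is discarded at time $t_i$, Claim \ref{claim_inductionhyp} directly produces the desired isotopy, monotone outside the balls $B_i^{jk}\subseteq X$. Performing all these isotopies simultaneously gives a 2-convex family $\{K^\sharp_s\}_{s\in[0,1]}$ from $K_{t_i}^\sharp$ to the disjoint union $\bigsqcup_C T_C$. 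The endpoints of the strings in $\Gamma_i$ lie on the boundaries of the standard caps, which move with $s$ inside the corresponding surgery balls; since $r_s$ is small compared to all other scales, I can smoothly deform $\Gamma_i$ to a family $\Gamma_i^s$ that tracks these endpoints and remains orthogonal to $\partial K^\sharp_s$ there, so that $(K^\sharp_s,\Gamma_i^s)\in\mathcal{X}_{\mathbb{A},b}$ throughout and $\Gamma_i^s$ is trivial away from a small neighborhood of the surgery balls. Feeding this family into the gluing map produces a 2-convex isotopy from $\mathcal{G}_{r_s}(K_{t_i}^\sharp,\Gamma_i)$ to $\mathcal{G}_{r_s}(\bigsqcup_C T_C,\Gamma_i^1)$, again monotone outside $X$ by the locality of $\mathcal{G}_{r_s}$ (part (4) of Theorem \ref{thm_attachstrings}).

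The main obstacle is the concluding identification of $\mathcal{G}_{r_s}(\bigsqcup_C T_C,\Gamma_i^1)$ as a marble tree. There are two points to check. First, a topological point: the graph with vertices the components of $K_{t_i}^\sharp$ lying inside a fixed connected component $K^i_0$ of $K^i$, and with edges the strings in $\Gamma_i^1$ joining them, must be a tree. This holds because collapsing each component to a point gives a space homotopy equivalent to $K^i_0$, which is a topological ball by the inductive preservation of the topology of $K_0$ under surgery, hence simply connected. Second, the string endpoints on $T_C$ must lie on marbles rather than on pre-existing strings; in the discarded case this is automatic from the proof of Proposition \ref{lemma_neckwithstdcaps}, where the standard surgery caps are exactly the ones deformed into marbles, and in the surviving case it can be arranged by the same Rearrangements-type mechanism. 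Invoking Lemma \ref{lemma_rearrange_curves} if needed to put $\Gamma_i^1$ into admissible form, the resulting domain is a disjoint union of marble trees indexed by the components of $K^i$. Concatenating the three isotopies established above and noting that each is monotone outside $X$ completes the verification of $\A_i$.
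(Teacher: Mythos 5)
Your proposal is correct and follows essentially the same route as the paper: undo the surgeries via Proposition \ref{lemma_neckconnectedsum} with the almost straight curves of Lemma \ref{lemma_almost_straight}, isotope each component of $K_{t_i}^\sharp$ to a marble tree using smooth MCF plus $\A_{i+1}$ (surviving components) or Claim \ref{claim_inductionhyp} (discarded components), and transport the strings by normal motion through the gluing map, invoking Lemma \ref{lemma_rearrange_curves} when curves meet the surgery regions. Your explicit verification that the resulting glued object is a tree (via simple connectedness of the ball-type components of $K^i$) is a detail the paper leaves implicit, but it is not a different argument.
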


To prove Claim \ref{claim_inductionstep}, we will also need the following lemma.

\begin{lemma}[moving curves out of surgery regions]\label{lemma_rearrange_curves}
Assume $\{K_t\}_{t\leq 0}$ has a strong $\eps$-neck of radius $r$ and center $p$ at $t=0$, and assume $\{\gamma_t\}_{t\leq t_0}$ is a $b$-controlled curve that meets $\partial K_t$ orthogonally and hits $\partial B_{8\Gamma r}(p)$ at some $t_0\leq 0$. Then, for $\eps$ small enough, there exists a smooth family $\{\tilde{\gamma}^t\}_{t\leq t_0}$ of $\min(b,r)/1000\Gamma$-controlled curves that meets $\partial K_t$ orthogonally such that $\tilde{\gamma}^t$ coincides with $\gamma_t$ for $t\leq-(9\Gamma r)^2$ and outside $B_{9\Gamma r}(p)$, and such that $\partial\tilde{\gamma}^0\notin B_{8\Gamma r}(p)$.
\end{lemma}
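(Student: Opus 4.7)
The plan is to use the strong $\eps$-neck hypothesis to reduce the local picture near $p$ to a round shrinking cylinder, and then push endpoints of curves along the cylinder axis out of the surgery ball via a cutoff axial translation.

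After rescaling so that $r=1$ and translating $p$ to the origin, the strong $\eps$-neck hypothesis yields that on the parabolic region $B_{10\Gamma}(0)\times[-(10\Gamma)^2,0]$ the surface $\partial K_t$ is a $C^{\lfloor 1/\eps\rfloor}$-small normal graph over a shrinking round cylinder $\Si_t$ of axis $\{x_2=\cdots=x_{n+1}=0\}$. Let $W=\partial_{x_1}$ and fix a smooth rotationally symmetric (about this axis) cutoff $\chi:\R^{n+1}\to[0,1]$ equal to $1$ on $B_{8.5\Gamma}(0)$, supported in $B_{9\Gamma}(0)$, and satisfying $|\nabla^k\chi|\lesssim\Gamma^{-k}$. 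The vector field $\chi W$ is axisymmetric, hence tangent to every $\Si_t$ and generating axial translation thereon. Define $V_t:=\chi W-\psi_t\langle\chi W,\nu_t\rangle\nu_t$, where $\nu_t$ is a smooth extension of the outward normal to $\partial K_t$ and $\psi_t$ a cutoff supported in an $O(1)$-neighborhood of $\partial K_t$; then $V_t$ is tangent to $\partial K_t$, supported in $B_{9\Gamma}(0)$, and $\|V_t-\chi W\|_{C^{\lfloor 1/\eps\rfloor}}=O(\eps)$. Pick a temporal cutoff $\eta:\R\to[0,1]$ vanishing on $(-\infty,-(9\Gamma)^2]$ with $\eta(0)=1$, let $\Phi_t$ be the time-$20\Gamma\eta(t)$ flow of $V_t$ (viewed as autonomous for each fixed $t$), and set $\tilde\gamma^t:=\Phi_t(\gamma_t)$, followed by a short radial correction near each endpoint on $\partial K_t$ to restore exact orthogonality.

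Since $\Phi_t$ is the identity for $t\le-(9\Gamma)^2$ and outside $B_{9\Gamma}(0)$, and preserves $\partial K_t$ setwise, the coincidence and support statements of the lemma are immediate. At $t=0$, any endpoint $q\in\partial\gamma_0\cap B_{8\Gamma}(0)$ is mapped by $\Phi_0$ to a point whose $x_1$-coordinate lies in $[9\Gamma-O(\eps),9\Gamma]$, which is outside $B_{8\Gamma}(0)$. A Gr\"onwall estimate using $|\nabla V_t|\lesssim\Gamma^{-1}$ and flow time $O(\Gamma)$ gives $\|D\Phi_t\|_{C^0}=O(1)$ and $\|D^2\Phi_t\|_{C^0}=O(\Gamma^{-1})$, from which the curvature of $\tilde\gamma^t$ inside $B_{9\Gamma}(0)$ is bounded by $|\kappa(\gamma_t)|+O(\Gamma^{-1})$; unrescaling yields $\min(b,r)/(1000\Gamma)$-control, while outside $B_{9\Gamma}(p)$ the curve is unchanged.

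The main obstacle is preserving the orthogonality condition exactly rather than only to order $\eps$. Since the flow of $\chi W$ is an isometry on the exact cylinder but only an $\eps$-approximate isometry on the perturbed $\partial K_t$, the image curve meets $\partial K_t$ at an angle $\pi/2+O(\eps)$. This is fixed by a final bending of $\tilde\gamma^t$ at unit length scale near each endpoint, in the spirit of Proposition \ref{bending_tubes}, which introduces only $O(\eps)$ extra curvature. Taking $\eps$ much smaller than $\Gamma^{-1}$ and $b^{-1}$ makes all the error terms negligible in the final control estimate.
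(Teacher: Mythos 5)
Your proposal takes essentially the same approach as the paper's: rescale so that the strong $\eps$-neck is a nearly-round shrinking cylinder aligned with the $x_1$-axis, and push the curve endpoint out of the surgery ball by sliding tangentially along the neck axis; the paper's own proof simply computes the radius $r_t=\sqrt{r^2-2(n-1)t}$ and the endpoint position and then says ``slide the curve along the neck,'' while you make the sliding explicit via a cut-off tangential vector field and a Gr\"onwall estimate, which amounts to the same thing. A minor inaccuracy that does not affect the conclusion: the image of the endpoint under $\Phi_0$ does not have $x_1$-coordinate in $[9\Gamma-O(\eps),9\Gamma]$ but rather lies somewhere in the transition zone of your radial cutoff (roughly between $8.5\Gamma$ and $9\Gamma$, depending on where the endpoint started) -- still outside $B_{8\Gamma}(0)$, so the weaker claim you actually need holds.
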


\begin{proof}[{Proof of Lemma \ref{lemma_rearrange_curves}}]
Assume without loss of generality that $p=0$ and that the neck is along the $x_1$-axis. If the neck is round, then its radius can be computed as $r_t=\sqrt {r^2-2(n-1)t}$. Note that $\abs{t_0}\leq (8\Gamma r)^2/2(n-1)$ and $\abs{\gamma_{t_0}^1}^2=(8\Gamma r)^2-r_{t_0}^2$.  For general $\eps$-necks, with $\eps$ small enough, this holds up to small error terms. We can then define $\tilde{\gamma}^t$ by sliding the curve along the neck.
\end{proof}

\begin{proof}[{Proof of Claim \ref{claim_inductionstep}}]
Smooth evolution by mean curvature flow provides a monotone isotopy between $K_{t_i}^+$ and $K^{i+1}$. Recall that $K_{t_{i}}^+\subseteq K_{t_{i}}^\sharp\subseteq K_{t_{i}}^-=K^i$ is obtained by performing surgery on a minimal collection of disjoint $\delta$-necks separating the thick part and the trigger part and/or discarding connected components that are entirely covered by canonical neighborhoods.

By induction hypothesis the connected components of $K^{i+1}$ are isotopic to marble trees, and by Claim \ref{claim_inductionhyp} the discarded components are isotopic to marble trees as well. It follows that all components of $K_{t_{i}}^\sharp$ are isotopic to marble trees. Let $\{L_t\}_{t\in [0,1]}$ denote such an isotopy deforming $L_0=K_{t_{i}}^\sharp$ into a union of marble trees $L_1$, which is monotone outside $X$. We now want to glue together the isotopies of the components. If there was only discarding at time $t_{i}$ there is no need to glue. Assume now $L_0$ has at least two components.

For each surgery neck at time $t_i$, select an almost straight line $\gamma_i^j$ between the tips of the corresponding pair of standard caps in $K_{t_{i}}^\sharp$ as in Lemma \ref{lemma_almost_straight}. Let $\gamma_0=\bigcup_{j}\gamma_i^j$.
By Proposition \ref{lemma_neckconnectedsum} the domain $K^i=K_{t_i}^-$ is isotopic to $\mathcal{G}_{r_s}(K_{t_{i}}^\sharp,\gamma_0)$ via 2-convex embeddings, with an isotopy that is trivial outside $X$. Finally, to get an isotopy $\mathcal{G}_{r_s}(L_t,\gamma_t)$ it remains to construct a suitable family of curves $\{\gamma_t\}_{t\in [0,1]}$ along which we can do the gluing. Start with $\gamma_0$. Essentially we define $\gamma_t$ by following the points where $\gamma_t$ touches $\partial L_t$ via normal motion. It can happen at finitely many times $t$ that $\gamma_{t}$ hits $\partial X$. In that case, we modify $\gamma_t$ according to Lemma \ref{lemma_rearrange_curves}, and then continue via normal motion. Then $\mathcal{G}_{r_s}(L_t,\gamma_t)_{t\in [0,1]}$ gives the last bit of the desired isotopy.
\end{proof}

It follows from backwards induction on $i$, that $\A_1$ holds. Note that smooth mean curvature flow provides a monotone isotopy via 2-convex embeddings between $K^1$ and $K_0$. In particular, $K^1$ has only one connected component. Finally, we can apply the theorem about marble trees (Theorem \ref{thm_marble_tree}) to deform the marble tree isotopic to $K^1$ into a round ball. Thus, any 2-convex embedded sphere can be smoothly deformed, through 2-convex embeddings, into a round sphere. This proves that $\mathcal{M}_n^{\textrm{2-conv}}$ is path-connected.
\end{proof}

\bibliography{buzanohaslhoferhershkovits}

\bibliographystyle{abbrv}

\vspace{10mm}
Reto Buzano (M\"{u}ller): r.buzano@qmul.ac.uk\\
{\sc School of Mathematical Sciences, Queen Mary University of London, Mile End Road, London E1 4NS, UK}\\

Robert Haslhofer: roberth@math.toronto.edu\\
{\sc Department of Mathematics, University of Toronto, 40 St George Street, Toronto, ON M5S 2E4, Canada}\\

Or Hershkovits: orher@stanford.edu\\
{\sc Department of Mathematics, Stanford University, 450 Serra Mall, Stanford, CA 94305, USA}\\

\end{document}